\sloppy\pagestyle{plain}
\newtheorem{theorem}[equation]{Theorem}
\newtheorem{proposition}[equation]{Proposition}
\newtheorem{lemma}[equation]{Lemma}
\newtheorem{corollary}[equation]{Corollary}
\newtheorem*{corollary*}{Corollary}
\newtheorem*{maincorollary*}{Main Corollary}
\newtheorem*{conjecture*}{Conjecture}
\newtheorem*{problem*}{Problem}
\newtheorem*{calabiproblem*}{Calabi Problem}
\newtheorem*{theorem*}{Theorem}
\newtheorem*{maintheorem*}{Main Theorem}
\theoremstyle{definition}
\newtheorem*{example*}{Example}
\newtheorem{definition}[equation]{Definition}
\theoremstyle{remark}
\newtheorem{remark}[equation]{Remark}
\newtheorem*{remark*}{Remark}
\makeatletter\@addtoreset{equation}{section} \makeatother
\newcommand{\mumu}{\boldsymbol{\mu}}
\title{K-polystability of two smooth Fano threefolds}
\author{Ivan Cheltsov and Hendrik S\"u\ss}
\address{\emph{Ivan Cheltsov}
\newline
\textnormal{University of Edinburgh,  Edinburgh, Scotland}
\newline
\textnormal{\texttt{I.Cheltsov@ed.ac.uk}}}
\address{\emph{Hendrik S\"u\ss}
\newline
\textnormal{University of Manchester, Manchester, England}
\newline
\textnormal{\texttt{suess@sdf-eu.org}}}
\begin{document}

\begin{abstract}
We give new proofs of the~K-polystability of two smooth Fano threefolds.
One of them is a~smooth divisor in $\mathbb{P}^1\times\mathbb{P}^1\times\mathbb{P}^2$ of degree $(1,1,1)$, which is unique up to isomorphism.
Another one is the~blow up of the complete intersection
$$
\Big\{x_0x_3+x_1x_4+x_2x_5=x_0^2+\omega x_1^2+\omega^2x_2^2+\big(x_3^2+\omega x_4^2+\omega^2x_5^2\big)+\big(x_0x_3+\omega x_1x_4+\omega^2x_2x_5\big)\Big\}\subset\mathbb{P}^5
$$
in the conic cut out by $x_0=x_1=x_2=0$, where $\omega$ is a~primitive cube root of unity.
\end{abstract}

\maketitle

\section{Introduction}
\label{section:intro}

Let $X$ be a~smooth Fano threefold. Then $X$ is contained in one of $105$ families, which are explicitly described in \cite{IsPr99},
These families are labeled as \textnumero 1.1, \textnumero 1.2, $\ldots$, \textnumero 9.1, \textnumero 10.1,
and members of each family can be parametrized by an irreducible rational variety.

\begin{theorem}[{\cite{ACCFKMGSSV}}]
\label{theorem:ACCFKMGSSV}
Suppose that $X$ is a~general member of the~family \textnumero $\mathscr{N}$. Then
$$
X\ \text{is K-polystable} \iff\mathscr{N}\not\in\left\{\aligned
&\ 2.23, 2.26 2.28, 2.30, 2.31, 2.33, 2.35, 2.36, 3.14, \\
& 3.16, 3.18,3.21, 3.22, 3.23, 3.24, 3.26, 3.28, 3.29, \\
&\  \ \ 3.30, 3.31, 4.5, 4.8, 4.9, 4.10, 4.11, 4.12, 5.2
\endaligned
\right\}.
$$
\end{theorem}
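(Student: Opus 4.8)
The plan is to prove the equivalence family by family, splitting into the ``only if'' direction (exhibiting an obstruction for the general member of each of the $27$ listed families) and the ``if'' direction (proving K-polystability for the general member of each of the remaining $78$ families). A preliminary reduction is useful in both cases: in each family one fixes a single well-chosen member $X$ — usually the most symmetric one — and uses openness of K-semistability in families (Blum--Liu--Xu) together with the fact that the general member degenerates to $X$, so that it is enough to analyze that one member, with a separate check that it is genuinely a flat limit of the general member when the conclusion is positive.

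For the ``only if'' direction I would produce an explicit obstruction. The first tool is Matsushima's theorem: whenever $\mathrm{Aut}(X)^{\circ}$ is non-reductive — which occurs for many of the listed families, where the general member carries a non-trivial $\Ga$-action — $X$ admits no Kähler--Einstein metric and hence is not K-polystable. For the remaining cases I would instead write down a destabilizing test configuration directly, via the valuative criterion of Fujita and Li: it suffices to find a prime divisor $F$ over $X$ with $\beta(F)=A_X(F)-S_X(F)\le 0$. Natural candidates are exceptional divisors of blow-ups of special curves or points that the geometry of the family forces to exist, or $\Gm$-degenerations of $X$ to (possibly singular) toric or horospherical limits whose generalized Futaki invariant can be read off combinatorially.

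For the ``if'' direction one must establish K-polystability — equivalently, by Chen--Donaldson--Sun, existence of a Kähler--Einstein metric. The main engine is the stability threshold $\delta(X)=\inf_F A_X(F)/S_X(F)$: if $\delta(X)>1$ then $X$ is K-stable, and the case $\delta(X)=1$ needs a finer analysis. To bound $\delta$ from below I would use the Abban--Zhuang method, picking in each family a flag (a point on an auxiliary surface inside the threefold) adapted to its geometry, refining $S_X$ along the flag through the Okounkov-body and Zariski-decomposition formalism, and invoking inversion of adjunction to reduce the estimate to a one-dimensional computation on a curve. When the general member has positive-dimensional automorphism group it cannot be K-stable, so everything is run equivariantly instead: by the Datar--Sz\'ekelyhidi criterion it is enough to verify $\beta(F)>0$ for $T$-invariant $F$, with $T$ a maximal torus of $\mathrm{Aut}(X)$, and for the toric and complexity-one cases arising here this can be done with the combinatorial formulas of Ilten--S\"uss and Fujita.

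I expect the hard part to be the boundary cases with $\delta(X)=1$, and more generally any family whose general member has infinite automorphism group yet is still K-polystable: there the crude $\delta$-estimate does not suffice and one must carry out the full equivariant analysis — identifying $\mathrm{Aut}(X)^{\circ}$, fixing a maximal torus, and checking positivity of the Futaki-type invariant on every invariant valuation, including those coming from the boundary of the moment polytope — while also confirming that the chosen special member really is a degeneration of the general one so that openness carries the conclusion back. Managing the $105$-fold bookkeeping consistently, and isolating in each family the single divisor or flag that makes the estimate go through, is where most of the labor lies.
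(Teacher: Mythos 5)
This statement is not proved in the paper at all: it is Theorem~\ref{theorem:ACCFKMGSSV}, imported verbatim from the reference \cite{ACCFKMGSSV} (the book-length ``Calabi problem for Fano threefolds''). The present paper only supplies new proofs of K-polystability for two individual threefolds (a member of family \textnumero 3.17 and one of \textnumero 2.16), so there is no in-paper argument to compare your proposal against. What you have written is a reasonable reconstruction of the strategy actually used in \cite{ACCFKMGSSV} --- Matsushima/non-reductivity and explicit destabilizing divisors for the listed families, and Tian's $\alpha$-invariant, the Abban--Zhuang flag method, and equivariant versions of the Fujita--Li valuative criterion for the rest --- and indeed the two case studies in this paper are instances of exactly those techniques (Theorems~\ref{theorem:Fujita-Li} and \ref{theorem:Tian} applied $G$-equivariantly).

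As a proof of the theorem itself, however, your text is a program rather than an argument: not a single one of the $105$ families is actually treated, and the labor you correctly identify at the end is the entire content of the result. One technical point deserves care if you pursue this: openness in families is known for K-semistability and for (uniform) K-stability, but \emph{not} for K-polystability. So exhibiting one K-polystable member with infinite automorphism group does not by itself yield K-polystability of the general member; one must either show the special member is K-\emph{stable}, or argue separately (e.g.\ that the general member has finite automorphisms and is K-stable, or run the equivariant analysis on the general member directly). Your phrase ``openness carries the conclusion back'' glosses over exactly this gap.
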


In the proof of this theorem, many explicitly given smooth Fano threefolds has been proven to be K-polystable.
Among them are the~two threefolds described in the~abstract.

Let $G$ be a~reductive subgroup in $\mathrm{Aut}(X)$, and let $f\colon\widetilde{X}\to X$ be a~$G$-equivariant birational morphism with smooth $\widetilde{X}$,
and let $E$ be any~$G$-invariant prime divisor in $\widetilde{X}$.
We say that $E$ is a~$G$-invariant prime divisor \emph{over} $X$, and we let $C_X(E)=f(E)$.
Then
$$
K_{\widetilde{X}}\sim f^*(K_X)+\sum_{i=1}^{n}a_iE_i
$$
where $E_1,\ldots,E_n$ are $f$-exceptional surfaces, and $a_1,\ldots,a_n$ are strictly positive integers.
If $E=E_i$ for some $i\in\{1,\ldots,n\}$, we let $A_X(E)=a_i+1$.
Otherwise, we let $A_X(E)=1$. The number $A_X(E)$ is known as the~log discrepancy of the~divisor $E$.
Then we let
$$
S_X(E)=\frac{1}{(-K_X)^n}\int_{0}^{\infty}\mathrm{vol}\big(f^*(-K_X)-xE\big)dx
$$
and $\beta(E)=A_X(E)-S_X(E)$. We have the~following result:

\begin{theorem}[{\cite{Fujita2019Crelle,Li,Zhuang}}]
\label{theorem:Fujita-Li}
The smooth Fano threefold $X$ is $K$-polystable if $\beta(F)>0$ for every $G$-invariant prime divisor $F$ over $X$.
\end{theorem}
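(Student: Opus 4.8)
The plan is to deduce the statement from three by now standard ingredients: the valuative criterion of Fujita and Li (which expresses $K$-(semi)stability through the invariant $\beta$), the reduction of Li and Xu to \emph{special} test configurations, and Zhuang's theorem that, when $G$ is reductive, $K$-polystability can be tested using only $G$-equivariant test configurations. Recall that $X$ is $K$-polystable precisely when every normal, anticanonically polarized test configuration $\mathcal{X}\to\mathbb{A}^1$ of $X$ has Donaldson--Futaki invariant $\mathrm{DF}(\mathcal{X})\geq 0$, with equality if and only if $\mathcal{X}$ is a product test configuration. So there are two assertions to prove: that no test configuration has a negative invariant ($K$-semistability), and that the ones with vanishing invariant are exactly the products.

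For $K$-semistability I would invoke the equivariant valuative criterion obtained by combining the criterion of Fujita and Li with Zhuang's equivariant reduction: $X$ is $K$-semistable as soon as $\beta(F)\geq 0$ for every $G$-invariant prime divisor $F$ over $X$, which holds by hypothesis. Suppose now, for contradiction, that $X$ is $K$-semistable but not $K$-polystable. Then some normal test configuration with $\mathrm{DF}=0$ fails to be a product, and by Zhuang's theorem we may take it to be $G$-equivariant. Running the Li--Xu reduction $G$-equivariantly --- a $G$-equivariant relative minimal model program followed by passage to the anticanonical model, steps that preserve $G$-equivariance, do not increase the Donaldson--Futaki invariant, and keep the test configuration non-product --- we obtain a special test configuration $\mathcal{X}$ with integral $\mathbb{Q}$-Fano central fibre $\mathcal{X}_0$; by $K$-semistability it still satisfies $\mathrm{DF}(\mathcal{X})=0$.

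Taking the order of vanishing along $\mathcal{X}_0$ in the total space and restricting it to $\mathbb{C}(X)\subset\mathbb{C}(\mathcal{X})$ produces a divisorial valuation $v_{\mathcal{X}}$ on $X$; it is nontrivial because $\mathcal{X}$ is not a product, and it is $G$-invariant because $\mathcal{X}$ is $G$-equivariant, so $v_{\mathcal{X}}=c\cdot\mathrm{ord}_F$ for some $G$-invariant prime divisor $F$ over $X$ and some $c\in\mathbb{Q}_{>0}$. By Fujita's computation of the Donaldson--Futaki invariant of a special test configuration, $\mathrm{DF}(\mathcal{X})$ equals $\beta(v_{\mathcal{X}})=c\,\beta(F)$ up to a fixed positive constant; since $\mathrm{DF}(\mathcal{X})=0$ this forces $\beta(F)=0$, contradicting the hypothesis $\beta(F)>0$. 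Hence $X$ is $K$-polystable.

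The delicate point --- and the place where one must be careful in order to reach \emph{poly}stability rather than merely semistability --- is the equivariant Li--Xu reduction: one has to verify that each step can be performed $G$-equivariantly without changing the \emph{sign} of the Donaldson--Futaki invariant, and, above all, that a non-product special test configuration genuinely gives rise to a \emph{nontrivial} $G$-invariant divisorial valuation over $X$ (rather than, say, the trivial valuation attached to a one-parameter subgroup of $G$), so that the strict inequality $\beta(F)>0$ can be applied to it. One should also check that the normalizing constant in Fujita's identity $\mathrm{DF}(\mathcal{X})=\mathrm{const}\cdot\beta(v_{\mathcal{X}})$ is positive, so that vanishing of $\mathrm{DF}(\mathcal{X})$ indeed forces $\beta(F)=0$.
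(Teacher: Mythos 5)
The paper gives no proof of this statement: it is quoted as a known result from the cited works of Fujita, Li and Zhuang, so there is no internal argument to compare yours against. Your outline --- the equivariant valuative criterion for semistability, Zhuang's reduction to $G$-equivariant test configurations, the $G$-equivariant Li--Xu passage to special test configurations, and Fujita's identity $\mathrm{DF}(\mathcal{X})=\mathrm{const}\cdot\beta(v_{\mathcal{X}})$ with positive constant --- is precisely the standard assembly of those references, and you correctly isolate the one genuinely delicate step, namely that the reduction to a special test configuration can be performed equivariantly while preserving the vanishing of the Donaldson--Futaki invariant and producing a nontrivial $G$-invariant divisorial valuation.
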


Now, we let
$$
\alpha_G(X)=\mathrm{sup}\left\{\epsilon\in\mathbb{Q}\ \left|\ \aligned
&\text{the log pair}\ \left(X, \frac{\epsilon}{m}\mathcal{D}\right)\ \text{is log canonical for any}\ m\in\mathbb{Z}_{>0}\\
&\text{and every $G$-invariant linear subsystem}\ \mathcal{D}\subset\big|-mK_{X}\big|\\
\endaligned\right.\right\}.
$$
This number, known as the~global log canonical threshold \cite{CheltsovShramovUMN},
has been defined in \cite{Ti87} in a~different way.
But both definitions agree by \cite[Theorem~A.3]{CheltsovShramovUMN}.
If $G$ is finite, then
$$
\alpha_G(X)=\mathrm{sup}\left\{\epsilon\in\mathbb{Q}\ \left|\ \aligned
&\text{the log pair}\ \left(X, \epsilon D\right)\ \text{is log canonical for every}\\
&\text{ $G$-invariant effective $\mathbb{Q}$-divisor}\ D\sim_{\mathbb{Q}} -K_{X}\\
\endaligned\right.\right\}.
$$
by \cite[Lemma~1.4.1]{ACCFKMGSSV}. We have the~following result:

\begin{theorem}[\cite{Ti87,ACCFKMGSSV}]
\label{theorem:Tian}
If $\alpha_G(X)\geqslant\frac{3}{4}$, then $X$ is K-polystable.
\end{theorem}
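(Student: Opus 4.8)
The plan is to reduce to the valuative criterion of Theorem~\ref{theorem:Fujita-Li}: it suffices to prove that $\beta(F)>0$ for every $G$-invariant prime divisor $F$ over $X$. Fix such an $F$, realized as a $G$-invariant prime divisor on a smooth $G$-equivariant model $f\colon\widetilde X\to X$, and write $n=\dim X=3$, $L=f^{*}(-K_{X})$, $V=(-K_{X})^{n}$, and $\tau=\tau(F)=\sup\{x\in\mathbb{Q}_{>0}\colon L-xF\text{ is big}\}$. The heart of the matter is the estimate
$$
S_{X}(F)\ \leqslant\ \frac{n}{n+1}\cdot\frac{A_{X}(F)}{\alpha_{G}(X)}.
$$
Granting it, and using $\alpha_{G}(X)\geqslant\tfrac34=\tfrac{n}{n+1}$, we obtain $\beta(F)=A_{X}(F)-S_{X}(F)\geqslant A_{X}(F)\bigl(1-\tfrac{3}{4\alpha_{G}(X)}\bigr)\geqslant 0$, and Theorem~\ref{theorem:Fujita-Li} will conclude the proof once this inequality is made strict.

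To bound $\tau$, fix a rational $x$ with $0<x<\tau$ and an integer $m>0$ so divisible that $mx\in\mathbb{Z}$ and $mL-mxF$ is big on $\widetilde X$. Then $W_{m,x}:=H^{0}\bigl(\widetilde X,\,mL-mxF\bigr)$ is a nonzero $G$-stable subspace of $H^{0}\bigl(X,-mK_{X}\bigr)$, so $\mathcal M_{m,x}:=\mathbb{P}(W_{m,x})\subseteq\bigl|-mK_{X}\bigr|$ is a $G$-invariant linear subsystem every member of which has $\mathrm{ord}_{F}\geqslant mx$. By the definition of $\alpha_{G}(X)$, the log pair $\bigl(X,\tfrac{\alpha_{G}(X)}{m}\mathcal M_{m,x}\bigr)$ is log canonical, hence $A_{X}(F)\geqslant\tfrac{\alpha_{G}(X)}{m}\cdot mx=\alpha_{G}(X)\,x$; letting $x\to\tau$ gives $\tau\leqslant A_{X}(F)/\alpha_{G}(X)$.

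It remains to upgrade the trivial bound $S_{X}(F)\leqslant\tau$ (coming from $\mathrm{vol}(L-xF)\leqslant V$) to the displayed estimate. Here I would exploit that $x\mapsto\mathrm{vol}(L-xF)^{1/n}$ is concave on $[0,\tau]$, equal to $V^{1/n}$ at $x=0$ and to $0$ at $x=\tau$, running the comparison through $m$-basis-type data: choose a basis $s_{1},\dots,s_{N_{m}}$ of $H^{0}(X,-mK_{X})$ adapted to the filtration by $\mathrm{ord}_{F}$, note $\tfrac{1}{mN_{m}}\sum_{i}\mathrm{ord}_{F}\bigl(\mathrm{div}(s_{i})\bigr)\to S_{X}(F)$, and combine the log-concavity of the volume with log canonicity against $\alpha_{G}(X)$ so as to produce the factor $\tfrac{n}{n+1}$ rather than $1$. \emph{I expect this to be the main obstacle}, on two counts: first, the comparison must be carried out $G$-equivariantly — one cannot in general select a single $G$-invariant basis-type divisor when $G$ is a positive-dimensional reductive group, so the $\alpha_{G}$-bound has to be applied to a $G$-invariant linear system extracted from the filtration rather than to one divisor; second, extracting the \emph{sharp} constant requires the full strength of the log-concavity of the volume function, not merely its endpoint values.

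Finally, for the borderline case $\alpha_{G}(X)=\tfrac34$ one must trace the equality case. Equality in the estimate forces $\tau(F)=\tfrac43 A_{X}(F)$ and $\mathrm{vol}(L-xF)=V\,(1-x/\tau)^{n}$ on $[0,\tau]$; in particular the estimate already yields $\delta_{G}(X)=\inf_{F}A_{X}(F)/S_{X}(F)\geqslant 1$ (the infimum over $G$-invariant prime divisors over $X$), i.e. $X$ is $G$-equivariantly K-semistable. To conclude K-polystability I would invoke the refinement of Theorem~\ref{theorem:Fujita-Li} (as in \cite{Zhuang}) by which a $G$-invariant prime divisor $F$ with $\beta(F)=0$ must be induced by a one-parameter subgroup of $\mathrm{Aut}(X)$ normalized by $G$, and then rule this out using the rigidity of the extremal configuration above for a smooth Fano threefold, so that in fact $\beta(F)>0$ for every $G$-invariant prime divisor $F$ over $X$. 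Theorem~\ref{theorem:Fujita-Li} then shows that $X$ is K-polystable.
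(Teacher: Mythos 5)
The paper does not actually prove Theorem~\ref{theorem:Tian}; it is imported from \cite{Ti87} and \cite{ACCFKMGSSV}, so there is no in-paper argument to compare with and your attempt must be measured against the known proof of the cited result. Your overall route is the correct modern one: reduce to Theorem~\ref{theorem:Fujita-Li} via the chain $S_X(F)\leqslant\frac{n}{n+1}\tau(F)$ and $\tau(F)\leqslant A_X(F)/\alpha_G(X)$, and your derivation of the second inequality from the $G$-invariant linear systems $\mathcal{M}_{m,x}$ is sound. The first inequality, however, is the crux, and the mechanism you propose for it fails: concavity of $x\mapsto\mathrm{vol}(f^*(-K_X)-xF)^{1/n}$ on $[0,\tau]$ with endpoint values $V^{1/n}$ and $0$ yields the \emph{lower} bound $\mathrm{vol}\geqslant V(1-x/\tau)^n$, hence $S_X(F)\geqslant\frac{1}{n+1}\tau(F)$; it cannot bound the integral from above, since a concave function with those endpoint values may stay near its maximum on most of the interval. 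The actual proof (Fujita, Blum--Jonsson) identifies $S_X(F)$ with the first barycenter coordinate of an Okounkov body built from a flag beginning with $F$ and uses Brunn--Minkowski concavity of the slice volumes plus a single-crossing comparison with a cone; that is where the factor $\frac{n}{n+1}$ comes from. You explicitly flag this step as the main obstacle, which is honest, but it means the central estimate is asserted rather than proved.

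The second gap is the boundary case, which is not a technicality here: the paper applies the theorem at $\alpha_G(X_i)=\frac34$ exactly, so your chain gives only $\beta(F)\geqslant 0$ and everything rests on the equality analysis. Your description of the extremal configuration is incorrect: equality in $S_X(F)=\frac{n}{n+1}\tau(F)$ forces the Okounkov body to be a cone over its slice at the $\tau$-end, i.e. $\mathrm{vol}(f^*(-K_X)-xF)=V\bigl(1-(x/\tau)^n\bigr)$, not $V(1-x/\tau)^n$ (compare the blow-up of a point on $\mathbb{P}^3$, where $\mathrm{vol}=64-x^3$, $\tau=4$ and $S=3=\frac34\tau$). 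Consequently the concluding step --- ruling out a $G$-invariant divisor with $\beta(F)=0$ ``using the rigidity of the extremal configuration'' together with the product-type refinement from \cite{Zhuang} --- is an assertion rather than an argument, and it is precisely where the proof in \cite{ACCFKMGSSV} does real work, extracting from the simultaneous equalities $A_X(F)=\alpha_G(X)\tau(F)$ and $S_X(F)=\frac{n}{n+1}\tau(F)$ either a contradiction or that $F$ is of product type. As written, your proposal establishes at best $G$-equivariant K-semistability in the case $\alpha_G(X)=\frac34$, which is the case the paper needs.
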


In this short note, we give a~new proof of the~K-polystability of the~threefolds described in the~abstract
using  Theorems~\ref{theorem:Fujita-Li} and \ref{theorem:Tian}. This is done in Sections~\ref{section:3-17} and \ref{section:2-16}.

\section{Smooth divisor in $\mathbb{P}^1\times\mathbb{P}^1\times\mathbb{P}^2$ of degree $(1,1,1)$}
\label{section:3-17}

Let $X$ be the~unique smooth Fano threefold in the~family \textnumero 3.17.
Then $X$ is the~divisor
$$
\Big\{x_0y_0z_2+x_1y_1z_0=x_0y_1z_1+x_1y_0z_1\Big\}\subset\mathbb{P}^1\times\mathbb{P}^1\times\mathbb{P}^2,
$$
where $([x_0:x_1],[y_0:y_1],[z_0:z_1:z_2])$ are coordinates on $\mathbb{P}^1\times\mathbb{P}^1\times\mathbb{P}^2$.

Let $G=\mathrm{Aut}(X)$. Then $G\cong\mathrm{PGL}_{2}(\mathbb{C})\rtimes\mumu_2$,
where $\mumu_2$ is generated by an~involution $\iota$ that acts as
$$
\big([x_0:x_1],[y_0:y_1],[z_0:z_1:z_2]\big)\mapsto\big([y_0:y_1],[x_0:x_1],[z_0:z_1:z_2]\big).
$$
and $\mathrm{PGL}_{2}(\mathbb{C})$ acts on each factor via an~appropriate irreducible $\mathrm{SL}_{2}(\mathbb{C})$-representation.
More precisely, an~element $\left(\begin{smallmatrix}
    a & b\\
    c & d\\
  \end{smallmatrix}\right)\in\mathrm{PGL}_{2}(\mathbb{C})$ acts as follows:
\begin{multline*}
\big([x_0:x_1],[y_0:y_1],[z_0:z_1:z_2]\big)\mapsto
\big([ax_0+cx_1:bx_0+dx_1],[ay_0+cy_1:by_0+dy_1],\\
[a^2z_0+2acz_1+c^2z_2:abz_0+(ad+bc)z_1+cdz_2:b^2z_0+2bdz_1+d^2z_2]\big)
\end{multline*}

There are birational contractions $\pi_1\colon X\to\mathbb{P}^1\times\mathbb{P}^2$ and
$\pi_2\colon X\to\mathbb{P}^1\times\mathbb{P}^2$ that contracts smooth irreducible surfaces $E_1$ and $E_1$ to smooth curves $C_1$ and $C_2$ of bi-degrees $(1,2)$.
Moreover, there exists $\mathrm{PGL}_{2}(\mathbb{C})$-equivariant commutative diagram
$$
\xymatrix{
&&X\ar@{->}[lld]_{\pi_1}\ar@{->}[rrd]^{\pi_2}&&\\%
\mathbb{P}^1\times\mathbb{P}^2\ar@{->}[rrd]_{\mathrm{pr}_2}&&&&\mathbb{P}^1\times\mathbb{P}^2\ar@{->}[lld]^{\mathrm{pr}_2}\\%
&&\mathbb{P}^2&&}
$$
where $\mathrm{pr}_2$ is the~projection to the~second factor,
the $\mathrm{PGL}_{2}(\mathbb{C})$-action on $\mathbb{P}^2$ is~\mbox{faithful},
and $\mathrm{pr}_2(C_1)=\mathrm{pr}_2(C_2)$ is the~unique $\mathrm{PGL}_{2}(\mathbb{C})$-invariant conic, which is given by $z_0z_2-z_1^2=0$.

By \cite[Lemma~4.2.6]{ACCFKMGSSV}, the threefold $X$ is K-polystable. Let us give~an alternative proof of this assertion.

Let $\mathrm{pr}_1\colon\mathbb{P}^1\times\mathbb{P}^2\to\mathbb{P}^1$ be the~projection to the~first factor.
Using $\mathrm{pr}_1\circ\pi_1$ and~$\mathrm{pr}_1\circ\pi_2$,
we obtain a~$\mathrm{PGL}_{2}(\mathbb{C})$-equivariant $\mathbb{P}^1$-bundle $\phi\colon X\to\mathbb{P}^1\times\mathbb{P}^1$,
where the~$\mathrm{PGL}_{2}(\mathbb{C})$-action on the~surface $\mathbb{P}^1\times\mathbb{P}^1$ is diagonal.
Let $C=E_1\cap E_2$. Then $\phi(C)$ is a~diagonal curve.
Denote its preimage on $X$ by $R$. Then $C=R\cap E_1\cap E_2$ and
$$
-K_X\sim E_1+E_2+R.
$$
Let $H_1=(\mathrm{pr}_1\circ\pi_1)^*(\mathcal{O}_{\mathbb{P}^1}(1))$,
let $H_2=(\mathrm{pr}_1\circ\pi_2)^*(\mathcal{O}_{\mathbb{P}^1}(1))$ and
let $H_L=(\mathrm{pr}_2\circ\pi_2)^*(\mathcal{O}_{\mathbb{P}^2}(1))$.
Then $\mathrm{Pic}(X)=\langle H_1,H_L,E_1\rangle$, $E_2\sim 2H_L-E_1$, $R\sim H_1+H_2\sim 2H_1+H_L-E_1$ and
$$
-K_X \sim 2H_1+3H_L-E_1.
$$

Observe that the~curve $C$ and the~surface $R$ are the~only proper $G$-invariant irreducible subvarieties~in~$X$.
This easily implies that $\alpha_G(X)=\frac{2}{3}$, so that we cannot apply Theorem~\ref{theorem:Tian} to prove that $X$ is K-polystable.
Let us apply Theorem~\ref{theorem:Fujita-Li} instead.

Let $\eta\colon Y\to X$ be a~$G$-equivariant birational morphism,
let $D$ be a~prime $G$-invariant divisor in $Y$, let $t$ be a~non-negative real number, and let
$$
S_X(D,t)=\frac{1}{-K_X^3}\int_0^{t}\mathrm{vol}\big(\eta^*(-K_X)-xD\big)dx.
$$
Then we have $S_X(D)=S(D,\infty)$ and $\beta(D)=A_X(D)-S_X(D)$.
By Theorem~\ref{theorem:Fujita-Li}, to prove that $X$ is K-polystable it is enough to show that $\beta(D)>0$.
Let us first show this in the~case when $\eta$ is an~identify map:

\begin{lemma}
\label{lemma:3-17-beta-R}
One has $S_X(R)=\frac{4}{9}$ and $\beta(R)=\frac{5}{9}$.
\end{lemma}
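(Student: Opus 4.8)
The plan is to evaluate $S_X(R)$ directly from the definition and then read off $\beta(R)$. Since $R$ is a prime divisor on $X$ itself, it is not exceptional over $X$, so $A_X(R)=1$ and $\beta(R)=1-S_X(R)$; thus it is enough to show $S_X(R)=\frac{4}{9}$. From $-K_X\sim E_1+E_2+R$ together with $E_2\sim 2H_L-E_1$ we get $-K_X\sim R+2H_L$, hence
$$
-K_X-xR\sim (1-x)R+2H_L\qquad\text{for all }x\geqslant 0.
$$
Using the realisation of $X$ as a divisor of tridegree $(1,1,1)$ in $\mathbb{P}^1\times\mathbb{P}^1\times\mathbb{P}^2$ (so that $R$ is the $\phi$-preimage of the diagonal, a $(1,1)$-curve, and $H_L$ is the pullback of a line under the morphism $X\to\mathbb{P}^2$), one computes $H_1^2=H_2^2=H_L^3=0$ and $H_1H_2H_L=H_1H_L^2=H_2H_L^2=1$, whence $R^3=H_L^3=0$, $R^2\cdot H_L=R\cdot H_L^2=2$ and $(-K_X)^3=(R+2H_L)^3=6\,R^2H_L+12\,R\,H_L^2=36$.

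Next I would pin down the Zariski decomposition of $-K_X-xR$. For $x\in[0,1]$ the class $(1-x)R+2H_L$ is a nonnegative combination of the nef classes $R$ and $H_L$ (pullbacks of ample classes under $\phi\colon X\to\mathbb{P}^1\times\mathbb{P}^1$ and under $X\to\mathbb{P}^2$), hence nef, and big whenever $x<1$ since its cube is positive. For $x>1$ the curve class $H_L^2$ is the class of a general fiber of $X\to\mathbb{P}^2$; as these fibers cover $X$, it pairs nonnegatively with every pseudoeffective class, yet $(-K_X-xR)\cdot H_L^2=(1-x)\,R\cdot H_L^2+2\,H_L^3=2(1-x)<0$. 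Hence $-K_X-xR$ is pseudoeffective precisely for $x\leqslant 1$, its Zariski decomposition is trivial throughout $[0,1]$, and
$$
\mathrm{vol}\big(-K_X-xR\big)=\big((1-x)R+2H_L\big)^3=12(1-x)^2+24(1-x)\qquad(0\leqslant x\leqslant 1).
$$

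It then remains to integrate:
$$
S_X(R)=\frac{1}{36}\int_{0}^{1}\big(12(1-x)^2+24(1-x)\big)\,dx=\frac{1}{36}(4+12)=\frac{4}{9},
$$
so that $\beta(R)=A_X(R)-S_X(R)=1-\frac{4}{9}=\frac{5}{9}$. The one point that genuinely needs checking is that the Zariski decomposition of $-K_X-xR$ stays trivial all the way up to the pseudoeffective threshold, that is, that the divisor does not cease to be nef before it ceases to be pseudoeffective; here both halves are easy, since $(1-x)R+2H_L$ is visibly a nonnegative combination of pullbacks of ample classes, and the bound $x\leqslant 1$ is forced by testing against the movable curve class $H_L^2$. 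Everything else is bookkeeping of intersection numbers.
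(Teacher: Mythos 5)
Your proposal is correct and follows essentially the same route as the paper: both compute $S_X(R)=\frac{1}{36}\int_0^1(-K_X-xR)^3\,dx$ using that $\tau(R)=1$ and that $-K_X-xR$ is nef on $[0,1]$, and your polynomial $12(1-x)^2+24(1-x)=36-48x+12x^2$ agrees with the paper's integrand. The only difference is that you supply the justifications the paper leaves implicit (nefness via writing the class as a nonnegative combination of $R$ and $H_L$, and the threshold via pairing with the movable class $H_L^2$), which is a welcome tightening rather than a new argument.
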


\begin{proof}
Since $-K_X=E_1+E_2+R$, the~pseudoeffective threshold $\tau(E)$ is $1$, so that
\begin{multline*}
S_X(R)=\frac{1}{-K_X^3}\int_0^1(-K_X-xR)^3dx=\\
=\int_0^1 -R^3 x^3 + R^2(-K_X)x^2- R(-K_X)^2+(-K_X)^3\,dx=\\
=\frac{1}{36}\int_{0}^1 12x^2 - 48x + 36\,dx=\frac{4}{9}.
\end{multline*}
Since $A_X(R)=1$, we have $\beta(R)=\frac{5}{9}$.
\end{proof}

Let $f\colon\widetilde{X} \to X$ be the~blow-up of the~curve $C$, let $E$ be the~exceptional surface of $f$,
let~$\widetilde{R}$, $\widetilde{E}_1$, $\widetilde{E}_2$ be the~proper transforms on $\widetilde{X}$ of the~surfaces $R$, $E_1$, $E_2$, respectively.
Then
$$
\left\{\aligned
&\widetilde{E}_1 \sim f^*(E_1)-E,\\
&\widetilde{E}_2 \sim 2f^*(H_L)-f^*(E_2)-E, \\
&\widetilde{R} \sim f^*(2H_1+H_L-E_1)-E.
\endaligned
\right.
$$

\begin{lemma}
\label{lemma:3-17-beta-E}
One has $S_X(E)=\frac{11}{9}$ and $\beta(E)=\frac{7}{9}$. Moreover, if $0\leqslant t\leqslant 1$, then
$$
S_X(E,t)=\frac{1}{36}\int_0^t(36 - 18t + 4x^3)dx = \frac{1}{36}t^4 - \frac{1}{4}t^3 + t.
$$
\end{lemma}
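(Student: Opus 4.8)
The plan is the following. Since $f$ blows up the smooth rational curve $C=E_1\cap E_2$ in the smooth threefold $X$, the discrepancy of $E$ is $1$, so $A_X(E)=2$; thus the identity $\beta(E)=A_X(E)-S_X(E)=2-\tfrac{11}{9}=\tfrac79$ will follow once $S_X(E)=\tfrac{11}{9}$ is proved. I would first record the intersection numbers on $\widetilde{X}$ that enter the volume. The curve $C$ is a section of the $\mathbb P^1$-bundle $R\to\phi(C)$, so $C\cong\mathbb P^1$; it has tridegree $(1,1,2)$ in $\mathbb P^1\times\mathbb P^1\times\mathbb P^2$, and since $-K_X=\mathcal O(1,1,2)|_X$ one gets $-K_X\cdot C=6$, hence $\deg N_{C/X}=-K_X\cdot C-2=4$. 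Therefore $E^3=-4$, $\,f^*(-K_X)\cdot E^2=-6$ and $\big(f^*(-K_X)\big)^2\cdot E=0$, which gives
\[
\big(f^*(-K_X)-xE\big)^3=36-18x^2+4x^3 .
\]

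Next I would produce the Zariski chamber decomposition of $f^*(-K_X)-xE$ on $\big[0,\tau(E)\big]$. From $-K_X\sim E_1+E_2+R$ together with $\widetilde{E}_i=f^*E_i-E$ and $\widetilde{R}=f^*R-E$ we have $f^*(-K_X)-3E=\widetilde{E}_1+\widetilde{E}_2+\widetilde{R}$, which is effective, while $f^*(-K_X)-xE$ is not pseudoeffective for $x>3$; hence $\tau(E)=3$. For $0\le x\le1$ the divisor $f^*(-K_X)-xE$ is nef: the only curves that could violate this are the proper transforms of the rulings of $E_i\cong\widetilde{E}_i\cong\mathbb P^1\times\mathbb P^1$ that $\pi_i$ contracts, and on such a curve $f^*(-K_X)-xE$ has degree $1-x\ge0$. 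So on $[0,1]$ we get $\mathrm{vol}\big(f^*(-K_X)-xE\big)=36-18x^2+4x^3$, and integrating gives the stated formula for $S_X(E,t)$ in the range $0\le t\le1$.

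For $1\le x\le2$ I would check that the positive part is $P(x)=2f^*H_1+(4-x)f^*H_L-f^*E_1-E$ with negative part $\tfrac{x-1}{2}\big(\widetilde{E}_1+\widetilde{E}_2\big)$: the coefficient $\tfrac{x-1}{2}$ is forced by $P(x)\cdot A=0$, where $A$ is the contracted ruling of $\widetilde{E}_1$ (indeed $P(x)|_{\widetilde{E}_1}=(6-2x)A$), and nefness of $P(x)$ is verified by restriction to $\widetilde{E}_1,\widetilde{E}_2,\widetilde{R},E$; this yields $\mathrm{vol}=P(x)^3=6x^2-36x+52$. For $2\le x\le3$ the negative part picks up the extra term $(x-2)\widetilde{R}$, the positive part becomes $P(x)=(3-x)\big(2f^*H_1+2f^*H_L-f^*E_1-E\big)$, and $\mathrm{vol}=P(x)^3=4(3-x)^3$, which vanishes at $x=3=\tau(E)$. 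Summing the three contributions,
\[
S_X(E)=\frac{1}{36}\left(\int_0^1\!(36-18x^2+4x^3)\,dx+\int_1^2\!(6x^2-36x+52)\,dx+\int_2^3\!4(3-x)^3\,dx\right)=\frac{31+12+1}{36}=\frac{11}{9},
\]
and hence $\beta(E)=2-\tfrac{11}{9}=\tfrac79$.

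The hard part is the middle step, i.e.\ determining which of $\widetilde{E}_1,\widetilde{E}_2,\widetilde{R}$ enter the negative part of $f^*(-K_X)-xE$ for each $x$ and with which coefficients, and confirming that the candidate positive parts are genuinely nef. This needs a workable description of the Mori cone of $\widetilde{X}$ (which has Picard rank $4$) and its dual nef cone; the chamber walls $x=1$ and $x=2$ are detected by restricting $f^*(-K_X)-xE$, respectively its partial positive part, to the surfaces $\widetilde{E}_i$ and $\widetilde{R}$ and noting when those restrictions leave the relevant nef cones. Everything else reduces to intersection-number bookkeeping in the basis $f^*H_1,\,f^*H_L,\,f^*E_1,\,E$ of $N^1(\widetilde{X})_{\mathbb{R}}$.
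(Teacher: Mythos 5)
Your proposal is correct and follows essentially the same route as the paper: the same pseudoeffective threshold $\tau(E)=3$, the same three Zariski chambers $[0,1]$, $[1,2]$, $[2,3]$ with negative parts supported on $\widetilde{E}_1+\widetilde{E}_2$ and then additionally on $\widetilde{R}$, the same volume polynomials $36-18x^2+4x^3$, $6x^2-36x+52$, $4(3-x)^3$, and the same integration giving $S_X(E)=\frac{11}{9}$ and $\beta(E)=\frac{7}{9}$. (Your explicit positive part $(3-x)\big(2f^*H_1+2f^*H_L-f^*E_1-E\big)$ on $[2,3]$ even fixes a sign slip in the paper's displayed nef part, which writes $+(x-2)\widetilde{R}$ where the negative part should receive $(x-2)\widetilde{R}$.)
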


\begin{proof}
We have
$$
f^*(-K_X)-xE\sim f^*(R+E_1+E_2)-xE \sim \widetilde{R}+\widetilde{E}_1+\widetilde{E}_2+(3-x)E.
$$
so that $\tau(E)=3$. If $0\leqslant x\leqslant 1$, then $f^*(-K_X)-xE$ is nef.
Thus, if $x \in [0,1]$, then
\begin{multline*}
\mathrm{vol}(f^*(-K_X)-xE)=\Big(f^*(-K_X)-xE\Big)^3=\\
=f^*(-K_X)^3+3x^2f^*(-K_X)E^2-x^3E^3=36-18x^2+4x^3.
\end{multline*}

If $3>x>1$, then both surfaces $\widetilde{E}_1$ and $\widetilde{E}_2$ lies in the~asymptotic base locus of the~big divisor $f^*(-K_X)-xE$.
Moreover, if $x\in[1,2]$, then its Zariski decomposition is
$$
f^*(-K_X)-xE\sim_{\mathbb{R}}\frac{1}{2}(x-1)\big(\widetilde{E}_1+\widetilde{E}_2\big)+\underbrace{\Big(f^*(-K_X)-xE-\frac{1}{2}(x-1)\big(\widetilde{E}_1+\widetilde{E}_2\big)\Big)}_{\text{nef part}}.
$$
Thus, if $x\in[1,2]$, then we have
$$
\mathrm{vol}(f^*(-K_X)+xE)=\Big(f^*(-K_X)-xE-\frac{1}{2}(x-1)\big(\widetilde{E}_1+\widetilde{E}_2\big)\Big)^3=6x^2-36x+52.
$$
If $x\in (2,3)$, then  the~nef part of the~Zariski decomposition of $f^*(-K_X)-xE$ is
$$
f^*(-K_X)-xE-\frac{1}{2}(x-1)\big(\widetilde{E}_1+\widetilde{E}_2\big)+(x-2)\widetilde{R}.
$$
Thus, if $x\in[2,3]$, then
$$
\mathrm{vol}(f^*(-K_X)-xE)=\Big(f^*(-K_X)-xE-\frac{1}{2}(x-1)\big(\widetilde{E}_1+\widetilde{E}_2\big)+(x-2)\widetilde{R}\Big)^3=4(3-x)^3.
$$

Summarizing and integrating, we see that
$$
S_X(E)=\frac{1}{36}\int_{0}^{1}(36-18x^2+4x^3)dx+\frac{1}{36}\int_{1}^{2}(6x^2-36x+52)dx+\frac{1}{36}\int_{2}^{3}4(3-x)^3dx=\frac{11}{9},
$$
which gives $\beta(E)=\frac{7}{9}$, because $A_X(E)=2$. Similarly, we compute $S_X(E,t)$.
\end{proof}

The~action of the~group $G$ lift to the~threefold $\widetilde{X}$,
and $E\cap\widetilde{R}$ is a~$G$-invariant irreducible  curve,
which is contained in the~pencil $|\widetilde{R}\vert_{E}|$.
Therefore, using \cite[Theorem~5.1]{Mabuchi}, we see that the~group $\mathrm{PGL}_2(\mathbb{C})$ must act trivially on the~fibers of the~natural projection $E\to C$.
Since the~curves $\widetilde{E}_1\vert_{E}$ and $\widetilde{E}_1\vert_{E}$ are swapped by $G$, we see conclude that
$|\widetilde{R}\vert_{E}|$ contains exactly two $G$-invariant curves: $E\cap\widetilde{R}$ and another curve, which we denote by $C^\prime$.

Now, let $g\colon \widehat{X}\to\widetilde{X}$ be the~blow up of the~curve $C^\prime$,
let $R^\prime$ be the~$f$-exceptional surface,
let $\widehat{E}_1$, $\widehat{E}_2$, $\widehat{E}$, $\widehat{R}$ be the~proper transforms on $\widehat{X}$ of the~surfaces $E_1$, $E_2$, $E$, $\widetilde{R}$, respectively.
Then we have
$$
(f\circ g)^*(-K_X)\sim_{\mathbb{R}}\widehat{E}_1+\widehat{E}_2+\widehat{R}+3\widehat{E}+3R^\prime,
$$
which implies that the~pseudoeffective threshold $\tau(R^\prime)=3$.
On the~other hand, we have

\begin{lemma}
\label{lemma:3-17-R-prime}
One has $\beta(R^\prime)\geqslant \frac{5}{9}$.
\end{lemma}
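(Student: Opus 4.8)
The plan is to compute $A_X(R')$, reduce the claim to an upper bound on $S_X(R')$, compute the volume of $(f\circ g)^*(-K_X)-xR'$ on the first Zariski chamber $x\in[0,2]$, and control it by monotonicity on $[2,3]$. Since $C'$ is a smooth curve in the smooth surface $E$, we have $\mathrm{mult}_{C'}E=1$, hence $g^*(E)=\widehat{E}+R'$ and
$$
K_{\widehat{X}}=g^*\big(K_{\widetilde{X}}\big)+R'=(f\circ g)^*(K_X)+g^*(E)+R'=(f\circ g)^*(K_X)+\widehat{E}+2R',
$$
so $A_X(R')=3$ and it suffices to prove $S_X(R')\leqslant\frac{22}{9}$; in fact the computation below gives the stronger bound $S_X(R')\leqslant\frac{11}{6}$, i.e. $\int_0^3\mathrm{vol}\big((f\circ g)^*(-K_X)-xR'\big)\,dx\leqslant66$, whence $\beta(R')=3-S_X(R')\geqslant\frac76>\frac59$.

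First I would record the geometry. By Lemma~\ref{lemma:3-17-beta-E} one has $f^*(-K_X)\cdot E^2=-6$, i.e. $(-K_X)\cdot C=6$, so $N_{C/X}$ has degree $4$; it is an extension of $E_1\vert_C$ by $E_2\vert_C$, which $\iota$ interchanges and which therefore have equal degree $2$, so $N_{C/X}\cong\mathcal{O}_{\mathbb{P}^1}(2)^{\oplus2}$ and $E\cong\mathbb{P}^1\times\mathbb{P}^1$. Writing $A$ for a fibre of $E\to C$ and $B$ for a section with $B^2=0$, one gets $E\vert_E=2A-B$, and $\widetilde{E}_1\vert_E$, $\widetilde{E}_2\vert_E$, $\widetilde{R}\vert_E$, $C'$ all have class $B$ (they lie in the pencil $|\widetilde{R}\vert_E|$). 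Since $C'\ne\widetilde{E}_i\vert_E$, these are distinct curves of class $B$ on $\mathbb{P}^1\times\mathbb{P}^1$, hence disjoint, so $C'\cap\widetilde{E}_i=\varnothing$; consequently $g^*(\widetilde{E}_i)=\widehat{E}_i$, the isomorphism $g\colon\widehat{E}\xrightarrow{\ \sim\ }E$ carries $R'\vert_{\widehat{E}}$ to $C'$ so that $\widehat{E}\vert_{\widehat{E}}=E\vert_E-C'=2A-2B$, and under $\widehat{E}_i\cong\widetilde{E}_i\cong E_i$ the curve $\widehat{E}\cap\widehat{E}_i$ becomes the curve $C\subset E_i$, which satisfies $(-K_X)\cdot C=6$ and $C^2=2$ on $E_i$.

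Now $R'=g^*(E)-\widehat{E}$, so $(f\circ g)^*(-K_X)-xR'=g^*\big(f^*(-K_X)-xE\big)+x\widehat{E}$; put $P_x:=g^*\big(f^*(-K_X)-xE\big)+\tfrac x2\widehat{E}$. One computes $P_x\vert_{\widehat{E}}=(6-x)A$, which is nef, and $P_x\vert_{\widehat{E}_i}=(-K_X)\vert_{E_i}-\tfrac x2C$; the latter is nef for $x\in[0,2]$, because $(-K_X)\vert_{E_i}-C=\big(f^*(-K_X)-E\big)\vert_{\widetilde{E}_i}$ is nef (restriction of a nef divisor) and $\big((-K_X)\vert_{E_i}-\tfrac x2C\big)\cdot C=6-x\geqslant0$, so the class $\big((-K_X)\vert_{E_i}-C\big)+\big(1-\tfrac x2\big)C$ is nonnegative on every irreducible curve of $E_i$. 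Moreover, for $x\in[0,1]$ the divisor $f^*(-K_X)-xE$ is nef, and for $x\in[1,2]$, using the Zariski decomposition $f^*(-K_X)-xE=P_x^{\mathrm{nef}}+\tfrac{x-1}{2}(\widetilde{E}_1+\widetilde{E}_2)$ of Lemma~\ref{lemma:3-17-beta-E}, one gets $P_x=g^*\big(P_x^{\mathrm{nef}}\big)+\tfrac{x-1}{2}(\widehat{E}_1+\widehat{E}_2)+\tfrac x2\widehat{E}$ with $g^*\big(P_x^{\mathrm{nef}}\big)$ nef; in either range $P_x$ is thus nef off $\widehat{E}_1\cup\widehat{E}_2\cup\widehat{E}$, so together with the restrictions above $P_x$ is nef on all of $\widehat{X}$ for $x\in[0,2]$. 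As $P_x^2\cdot\widehat{E}=(P_x\vert_{\widehat E})^2=0$, this is the Zariski decomposition, and a direct computation with $E^3=-4$, $f^*(-K_X)\cdot E^2=-6$, $\widehat{E}^3=-8$ gives $\mathrm{vol}\big((f\circ g)^*(-K_X)-xR'\big)=P_x^3=36-9x^2+\tfrac32x^3$ on $[0,2]$. Hence $\int_0^2(36-9x^2+\tfrac32x^3)\,dx=54$ and $\mathrm{vol}\big((f\circ g)^*(-K_X)-2R'\big)=12$, and since $x\mapsto\mathrm{vol}\big((f\circ g)^*(-K_X)-xR'\big)$ is non-increasing, $\int_2^3\mathrm{vol}\big((f\circ g)^*(-K_X)-xR'\big)\,dx\leqslant12$; so $\int_0^3\leqslant66$, $S_X(R')\leqslant\frac{11}{6}$, and $\beta(R')\geqslant\frac76>\frac59$.

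The main work is in the third step: setting up the intersection theory on $E$ and on $E_1$, $E_2$ (in particular $C^2=2$ there and the disjointness of $C'$ from $\widetilde{E}_1$, $\widetilde{E}_2$), and verifying that $P_x$ stays nef on $\widehat{E}_1$ and $\widehat{E}_2$ throughout $x\in[0,2]$ — it is precisely this that makes the first Zariski chamber reach $x=2$, which in turn makes the crude monotonicity bound on $[2,3]$ good enough. Pushing the chamber analysis into $[2,3]$ (where $\widehat{E}_1$, $\widehat{E}_2$, and then $\widehat{R}$, enter the negative part) would give the exact value of $S_X(R')$, but that refinement is not needed for the stated inequality.
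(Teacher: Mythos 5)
Your computation of $A_X(R')=3$, the identification of $\widehat{E}$ in the negative part with multiplicity at least $\tfrac{x}{2}$ (via the fibres $\ell$ of $\widehat{E}\to C$ with $\bigl((f\circ g)^*(-K_X)-xR'\bigr)\cdot\ell=-x$ and $\widehat{E}\cdot\ell=-2$), the nefness of $P_x$ for $x\in[0,2]$, and the value $P_x^3=36-9x^2+\tfrac32x^3$ are all correct: I checked $E^3=-4$, $f^*(-K_X)\cdot E^2=-6$, $R'^3=-2$, $\widehat{E}^3=-8$ and they reproduce your cubic, as well as $\int_0^2P_x^3\,dx=54$ and $P_2^3=12$. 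This is a genuinely sharper analysis of the first chamber than the paper performs: the paper never computes the volume of $(f\circ g)^*(-K_X)-xR'$ at all, but simply observes that its positive part is dominated by $g^*\bigl(f^*(-K_X)-\tfrac{x}{2}E\bigr)$, so that the volume is at most $\mathrm{vol}\bigl(f^*(-K_X)-\tfrac{x}{2}E\bigr)$ for every $x\geqslant 0$, integrates this over all of $[0,\infty)$, and gets $S_X(R')\leqslant 2S_X(E)=\tfrac{22}{9}$, hence $\beta(R')\geqslant\tfrac59$ exactly, with no need to locate the pseudoeffective threshold.

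There is, however, a genuine gap in your last step: the passage from ``$\int_2^3\mathrm{vol}\,dx\leqslant 12$'' to ``$S_X(R')\leqslant\tfrac{11}{6}$'' silently uses that $\mathrm{vol}\bigl((f\circ g)^*(-K_X)-xR'\bigr)=0$ for $x\geqslant 3$, i.e.\ that $\tau(R')\leqslant 3$. Nothing in your argument establishes this: the decomposition $(f\circ g)^*(-K_X)\sim\widehat{E}_1+\widehat{E}_2+\widehat{R}+3\widehat{E}+3R'$ gives only $\tau(R')\geqslant 3$ (the paper's preamble asserts equality but likewise only justifies this direction), and your chamber analysis stops at $x=2$. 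The only upper bound your setup yields for free is $\mathrm{vol}=\mathrm{vol}(P_x)\leqslant\mathrm{vol}\bigl(f^*(-K_X)-\tfrac{x}{2}E\bigr)$, which vanishes only for $x\geqslant 6$; with monotonicity alone on $[2,6]$ you would get $\int_0^\infty\leqslant 54+4\cdot 12=102$ and only $\beta(R')\geqslant\tfrac16$, short of the claimed $\tfrac59$. The fix is cheap and stays inside your framework: for $x\geqslant 2$ use the inequality just mentioned, so that
$\int_2^\infty\mathrm{vol}\,dx\leqslant\int_2^{\infty}\mathrm{vol}\bigl(f^*(-K_X)-\tfrac{x}{2}E\bigr)dx=2\cdot 36\bigl(S_X(E)-S_X(E,1)\bigr)=72\bigl(\tfrac{11}{9}-\tfrac{7}{9}\bigr)=32$,
whence $\int_0^\infty\leqslant 54+32=86$, $S_X(R')\leqslant\tfrac{43}{18}$ and $\beta(R')\geqslant\tfrac{11}{18}>\tfrac59$ --- still a slightly better constant than the paper's, and with no knowledge of $\tau(R')$ required.
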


\begin{proof}
Let $x$ be a~non-negative real number such that $x<3$. Then $\widehat{E}$ lies in the~stable base locus of the~divisor $(f\circ g)^*(-K_X)-xF$,
and the~positive part of the~Zariski decomposition of this divisor  has the~following form:
$$
(f\circ g)^*(-K_X)-\frac{t}{2}\widehat{E}-xR^\prime-D
$$
for an~effective $\mathbb{R}$-divisor $D$.
Indeed, if $\ell$ is a~general fiber of the~projection $\widehat{E}\to C$, then
$$
\Big((f\circ g)^*(-K_X)-xR^\prime\Big)\cdot\ell=-x
$$
and $\widehat{E}\cdot\ell=-2$, which implies the~required assertion. Thus, we have
$$
S_X(F)\leqslant 2S_X(E)=\frac{22}{9},
$$
because $S_X(E)=\frac{11}{9}$ by Lemma~\ref{lemma:3-17-beta-E}. Then
$$
\beta(F)=A_X(F)-S_X(F)=3-S_X(F)\geqslant 3-\frac{22}{9}=\frac{5}{9}
$$
as required.
\end{proof}

The~action of the~group $G$ lifts to  $\widehat{X}$, and the~surfaces $R^\prime$, $\widehat{E}$ and $\widehat{R}$ are $G$-invariant.

\begin{remark}
\label{remark:3-17}
There exists the~following $G$-equivariant commutative diagram:
$$
\xymatrix{
&&\widetilde{X}\ar@{->}[dl]_f\ar@{->}[dr]^h&\widehat{X}\ar@{->}[l]_g\ar@{->}[d]^\upsilon\\
R\ar@{->}[d]\ar@{^{(}->}[r]&X\ar@{->}[d]^{\phi}&&\overline{X}\ar@{->}[d]^\psi\\%
C\ar@{^{(}->}[r]&\mathbb{P}^1\times\mathbb{P}^1\ar@{=}[rr]&&\mathbb{P}^1\times\mathbb{P}^1}
$$
where $h$ is the~contraction of the~surface $\widetilde{R}$, $\upsilon$ is the~contraction of the~surfaces $R^\prime$ and~$\widehat{R}$,
and $\psi$ is a~$\mathbb{P}^1$-bundle.
Moreover, one can show that $\overline{X}\cong\mathbb{P}(\mathcal{O}_{\mathbb{P}^1 \times \mathbb{P}^1}(2,0)\oplus\mathcal{O}_{\mathbb{P}^1 \times \mathbb{P}^1}(0,2))$,
so that there is an~involution $\sigma\in\mathrm{Aut}(\overline{X})$ such that $\sigma$ swaps the~curves $\upsilon(R^\prime)$ and $\upsilon(\widehat{R})$.
Then~$\sigma$ lifts to $\widehat{V}$ and swaps the~divisors $R^\prime$ and $\widehat{R}$.
\end{remark}

The threefold $\widetilde{X}$ contains two $G$-invariant irreducible curves: the~curves $E\cap\widetilde{R}$ and~$C^\prime$.
The threefold $\widehat{X}$ also contains just two $G$-invariant irreducible curves: $\widehat{E}\cap\widehat{R}$ and $\widehat{E}\cap R^\prime$,
which are swapped by the~involution $\sigma$ from Remark~\ref{remark:3-17}.
Blowing up one of the~curves, we obtain a~new threefold that contains exactly three $G$-invariant irreducible curves that can be described in a~very similar manner.
Now, iterating this process, we obtain infinitely many $G$-invariant prime divisors over $X$,
which can be described  using weighted blow ups.

\begin{definition}
\label{definition:weighted-blow-up}
Let $V$ be a~smooth threefold that contains two smooth irreducible distinct surfaces $A$ and $B$ that
intersect transversally along a~smooth irreducible curve~$Z$,
and let $\theta\colon U\to V$ be the~weighted blow up with weights $(a,b)$ of the~curve $Z$~with respect to the~local coordinates along $Z$ that are given by the~equations of the~surfaces~$A$~and~$B$,
and let $F$ be the~exceptional surface of the~weighted blow up $\theta$. Then
\begin{itemize}
\item the~morphism $\theta$ is said to be an~$(a,b)$-blowup between $A$ and $B$,
\item the~surface $F$ is said to be an~$(a,b)$-divisor between $A$ and $B$.
\end{itemize}
\end{definition}

Observe that $(1,1)$-blow up in this construction is the~usual blow up of the~intersection curve.
To proceed, we need the~following well-known result:

\begin{lemma}
\label{lemma:iterated-blowup-Fn}
In the~assumptions of Definition~\ref{definition:weighted-blow-up} and notations introduced in this definition, suppose  that $(a,b)=(1,1)$ and $Z\cong\mathbb{P}^1$.
Let $n=|\alpha-\beta|$, where $\alpha$ and $\beta$ be integers such~that
$$
Z^2=\left\{\aligned
&\alpha\ \text{on the~surface A},\\
&\beta\ \text{on the~surface B}.\\
\endaligned
\right.
$$
Denote by $\widetilde{A}$ and $\widetilde{B}$ the~proper transforms on $U$ of the~surfaces $A$ and $B$, respectively.
Then $F\cong\mathbb{F}_n$, the~surfaces $\widetilde{A}$ and $\widetilde{B}$ are disjoint,
$\widetilde{A}\vert_{E}$ and $\widetilde{B}\vert_{E}$ are sections of the~natural projection $F\to Z$ such that $(\widetilde{A}\vert_{E})^2=(\beta-\alpha)$ and $(\widetilde{B}\vert_{E})^2=(\alpha-\beta)$.
\end{lemma}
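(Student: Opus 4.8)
The plan is to work entirely in a formal/analytic neighbourhood of the curve $Z\cong\mathbb{P}^1$ inside $V$, where the geometry of a $(1,1)$-blow up is completely standard. First I would set up local coordinates: cover $Z$ by two charts $Z_0,Z_1$ with coordinate $w$ on $Z_0$, and choose fibre coordinates $u,v$ so that $A=\{u=0\}$ and $B=\{v=0\}$ near $Z$; the transition functions for the line bundles $N_{A/V}|_Z\cong\mathcal O_{\mathbb P^1}(\beta)$ (normal direction of $Z$ inside $A$, cut out by $v$... wait — I should be careful which surface records which self-intersection) and $N_{B/V}|_Z\cong\mathcal O_{\mathbb P^1}(\alpha)$. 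Concretely, on $Z_0\cap Z_1$ one has $w'=1/w$, $u'=w^{?}u$, $v'=w^{?}v$ with the exponents dictated by $\alpha=Z^2|_A$ and $\beta=Z^2|_B$. The blow up of $Z=\{u=v=0\}$ introduces the exceptional $\mathbb P^1$ with homogeneous coordinate $[u:v]$ over each point of $Z$, so $F$ is the projectivisation $\mathbb P(L_u\oplus L_v)$ of the rank-two bundle on $Z=\mathbb P^1$ whose summands have degrees differing by $|\alpha-\beta|$; hence $F\cong\mathbb F_n$ with $n=|\alpha-\beta|$. This identifies the ruling $F\to Z$ as the claimed natural projection.

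Next I would identify the two sections. The proper transform $\widetilde A$ meets $E=F$ along the locus $\{u=0\}$ inside the exceptional $\mathbb P^1$-bundle, which is exactly the section of $F\to Z$ corresponding to the summand $L_u$; similarly $\widetilde B|_F=\{v=0\}$ is the section corresponding to $L_v$. On $\mathbb F_n$ the two disjoint sections have self-intersections $+n$ and $-n$, and the sign is pinned down by the adjunction/normal-bundle bookkeeping: $\widetilde A|_F$ is a copy of $Z$ sitting inside $\widetilde A\cong A$ as the proper transform of $Z$, and since the $(1,1)$-blow up of a curve lowers its self-intersection inside the surface containing it by $1$... no — here $Z\subset A$ is blown up only as a point in each fibre is not the right picture either; rather $\widetilde A\to A$ is an isomorphism (it is the blow up of $A$ along the curve $Z$ which is a divisor in $A$, hence an isomorphism), so $(\widetilde A|_F)^2$ must be computed from how $\widetilde A|_F$ sits in $F$, not in $\widetilde A$. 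The clean way: $F|_F=\mathcal O_F(-1)$ restricted, and $\widetilde A|_F=f^*(A|_{\text{near }Z})-F$ restricted to $F$; intersecting with the class of $\widetilde A|_F$ and using $A\cdot Z=\alpha$ (read inside $V$ via $B$) gives $(\widetilde A|_F)^2=\beta-\alpha$, and symmetrically $(\widetilde B|_F)^2=\alpha-\beta$. Finally $\widetilde A\cap\widetilde B$: away from $Z$ the surfaces $A,B$ meet only along $Z$, and over $Z$ their proper transforms hit $F$ in the two sections $\{u=0\}$ and $\{v=0\}$, which are disjoint in $\mathbb P^1$; hence $\widetilde A\cap\widetilde B=\varnothing$.

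I expect the only real subtlety to be \emph{signs and which surface carries which index}: keeping straight that $Z^2|_A=\alpha$ is computed in $A$ while the coordinate cutting out $A$ is the one whose line bundle is the \emph{normal} bundle $N_{A/V}|_Z$, and then matching this against the Hirzebruch-surface convention under which the negative section of $\mathbb F_n$ corresponds to the summand of smaller degree. I would settle this once and for all by a direct transition-function computation in the two charts $Z_0,Z_1$: write down $u',v'$ in terms of $u,v,w$, homogenise $[u:v]$, read off the two sub-line-bundles of the rank-two bundle, compute their degrees, and thereby get both $F\cong\mathbb F_{|\alpha-\beta|}$ and the signed self-intersections $(\widetilde A|_F)^2=\beta-\alpha$, $(\widetilde B|_F)^2=\alpha-\beta$ simultaneously. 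Everything else — disjointness of $\widetilde A,\widetilde B$, the fact that $\widetilde A|_F,\widetilde B|_F$ are sections — is then immediate, and since this is the cited ``well-known'' lemma I would keep the write-up to this local computation plus the adjunction remark, without belabouring it.
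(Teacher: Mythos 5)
The paper gives no proof of this lemma --- it is literally ``Left to the reader'' --- so there is nothing to compare against; your plan is the standard argument and it does establish the statement. Since $A$ and $B$ meet transversally along $Z$, one has $N_{Z/V}\cong N_{Z/A}\oplus N_{Z/B}$ with $\deg N_{Z/A}=\alpha$ and $\deg N_{Z/B}=\beta$, so $F=\mathbb{P}(N_{Z/V})\cong\mathbb{F}_{|\alpha-\beta|}$, the traces $\widetilde{A}\vert_F$ and $\widetilde{B}\vert_F$ are the two disjoint sections $\mathbb{P}(N_{Z/A})$ and $\mathbb{P}(N_{Z/B})$, and the self-intersection of $\mathbb{P}(L_1)$ in $\mathbb{P}(L_1\oplus L_2)$ is $\deg(L_1^{\vee}\otimes L_2)$, which gives $\beta-\alpha$ and $\alpha-\beta$ with the signs as claimed. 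The one concrete slip in your ``clean way'' is the parenthetical $A\cdot Z=\alpha$: since $A\vert_B=Z$ as divisors on $B$, one has $A\cdot Z=(Z^2)_B=\beta$, and then $(\widetilde{A}\vert_F)^2=(\theta^*A-F)^2\cdot F=2\beta+F^3=2\beta-(\alpha+\beta)=\beta-\alpha$; with $A\cdot Z=\alpha$ the same computation would output $\alpha-\beta$, i.e.\ the wrong sign. You correctly flag this bookkeeping as the only delicate point and your final stated answers are the right ones, so the write-up just needs that correction rather than any new idea.
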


\begin{proof}
Left to the~reader.
\end{proof}

Now, we are ready to prove

\begin{lemma}
\label{lemma:3-17-weighted-blowups}
All $G$-invariant prime divisors over $X$ can be described as follows:
\begin{enumerate}
\item the~surfaces $R$, $E$ or $R^\prime$,

\item an~$(a,b)$-divisor between $E$ and $\widetilde{R}$,

\item an~$(a,b)$-divisor between $\widehat{E}$ and $R^\prime$.
\end{enumerate}
\end{lemma}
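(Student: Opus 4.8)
The plan is to classify all $G$-invariant prime divisors over $X$ by analyzing $G$-invariant centers inductively. The starting observation, already established in the excerpt, is that $X$ itself has exactly two proper $G$-invariant irreducible subvarieties, namely the curve $C$ and the surface $R$, and that over a surface-type center the only divisor obtained by (weighted) blow-up is the surface itself (accounting for case (1) contributing $R$). So the real content is understanding what happens over the curves. First I would argue that any $G$-invariant prime divisor over $X$ whose center on $X$ is the point-free locus must eventually, after a sequence of $G$-equivariant blow-ups, be extracted by blowing up a $G$-invariant curve lying on a smooth model; and by the $\mathrm{PGL}_2(\mathbb C)$-representation theory (as in the application of \cite[Theorem~5.1]{Mabuchi} earlier) the only $G$-invariant curves that arise are the ones listed: $C = E_1 \cap E_2 \subset X$, then $E \cap \widetilde R$ and $C'$ on $\widetilde X$, then $\widehat E \cap \widehat R$ and $\widehat E \cap R'$ on $\widehat X$, and so on.

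The key structural input is Lemma~\ref{lemma:iterated-blowup-Fn}: when one blows up the transversal intersection curve $Z \cong \mathbb P^1$ of two smooth surfaces $A$ and $B$, the new exceptional divisor $F \cong \mathbb F_n$ carries exactly the proper transforms $\widetilde A|_F$ and $\widetilde B|_F$ as distinguished $G$-invariant sections, while $\widetilde A$ and $\widetilde B$ become disjoint. Combined with the Mabuchi-type rigidity argument (the group $\mathrm{PGL}_2(\mathbb C)$ acts trivially on the fibres of $F \to Z$, so the only $G$-invariant curves in $F$ are the two sections $\widetilde A|_F$, $\widetilde B|_F$, and possibly the two $G$-invariant curves in the pencil they span if they are swapped), this shows that the combinatorial picture is stable: every blow-up of a $G$-invariant curve replaces a pair of "adjacent" surfaces meeting along a curve by a new surface meeting each of them, and introduces no new $G$-invariant curves beyond the predictable ones. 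The upshot is that the $G$-invariant divisorial valuations over $X$ are exactly those obtained from the two "towers" of blow-ups: the tower built over $C$ between $E$ and $\widetilde R$, and the tower built over $C'$ between $\widehat E$ and $R'$; and by the standard toric description of iterated blow-ups of a curve, every valuation in such a tower is computed by a single weighted blow-up with some weights $(a,b)$ between two adjacent smooth surfaces, giving cases (2) and (3).

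Concretely I would proceed as follows. Step one: reduce to curve centers, using that the unique $G$-invariant surface $R \subset X$ contributes only $R$ itself (a divisor with center a surface is the surface), and that $X$ has no $G$-invariant points. Step two: on $\widetilde X = \mathrm{Bl}_C X$, invoke the discussion already in the text that the two $G$-invariant curves are $E \cap \widetilde R$ and $C'$, where $C' \in |\widetilde R|_E|$ is the second $G$-invariant member; note $\widehat X = \mathrm{Bl}_{C'}\widetilde X$ has the two $G$-invariant curves $\widehat E \cap \widehat R$ and $\widehat E \cap R'$, swapped by $\sigma$. Step three: observe the two configurations $(E, \widetilde R)$ on $\widetilde X$ and $(\widehat E, R')$ (equivalently $(\widehat E, \widehat R)$, exchanged by $\sigma$) on $\widehat X$ fall exactly under Definition~\ref{definition:weighted-blow-up}, and that by Lemma~\ref{lemma:iterated-blowup-Fn} blowing up the intersection curve preserves this "pair of surfaces meeting transversally along a $G$-invariant $\mathbb P^1$" structure, merely shifting the pair; hence by induction every further $G$-invariant curve that appears is an intersection of two adjacent smooth surfaces in one of the two towers. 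Step four: recall that the divisorial valuations obtained by iterated $(1,1)$-blow-ups starting from the intersection curve of two surfaces are precisely the $(a,b)$-valuations for $(a,b)$ coprime (a local toric computation), so every $G$-invariant prime divisor over $X$ is either $R$, $E$, $R'$, an $(a,b)$-divisor between $E$ and $\widetilde R$, or an $(a,b)$-divisor between $\widehat E$ and $R'$. The main obstacle is Step two / Step three: one must be sure that the Mabuchi-type rigidity (triviality of the $\mathrm{PGL}_2$-action on fibres of each $\mathbb F_n$) genuinely rules out the appearance of any "sporadic" $G$-invariant curve on the successive models — in particular that no $G$-invariant curve ever lies in a fibre of $F \to Z$ and that, when the two sections are swapped by an involution, the only extra $G$-invariant curves are the (at most two) invariant members of the pencil they generate, which is precisely what keeps the list finite at each stage and makes the induction go through.
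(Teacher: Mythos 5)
Your overall strategy is the same as the paper's: reduce to curve centers, factor the extraction of the divisor through a chain of $G$-equivariant blow-ups of $G$-invariant curves, and use Lemma~\ref{lemma:iterated-blowup-Fn} together with \cite[Theorem~5.1]{Mabuchi} to show inductively that each new exceptional surface carries exactly two invariant curves, both of which are intersections with adjacent surfaces of the tower. The final identification of such a tower with a single $(a,b)$-weighted blow-up is also how the paper concludes.

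However, the step you flag at the end as ``the main obstacle'' is precisely the point that carries the whole proof, and you leave it unproved. The danger is concrete: if at some stage the exceptional surface were $\mathbb{F}_0\cong Z\times\mathbb{P}^1$ with $\mathrm{PGL}_2(\mathbb{C})$ acting trivially on the fibres, then \emph{every} horizontal section $Z\times\{pt\}$ would be invariant, the surface would contain a one-parameter family of $G$-invariant curves, and the classification would collapse --- new invariant curves not cut out by adjacent surfaces would appear, producing divisors outside your list. The paper closes this by proving a strictly stronger inductive claim: each $F_m$ is isomorphic to $\mathbb{F}_n$ with $n>0$, its two invariant curves are disjoint with self-intersections $n$ and $-n$, each equals $F_m\cap\overline{F}_r$ for some earlier surface, and the sign of the self-intersection on $F_m$ is opposite in the appropriate sense to the sign on $\overline{F}_r$ (namely $\mathscr{C}^2=n$ on $F_m$ forces $\mathscr{C}^2\leqslant 0$ on $\overline{F}_r$, and $\mathscr{C}^2=-n$ forces $\mathscr{C}^2>0$). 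This sign bookkeeping is what guarantees that the next Hirzebruch index $s=n+|\gamma|$ is again strictly positive, so the degeneration to $\mathbb{F}_0$ never occurs and Mabuchi's theorem keeps giving exactly two invariant sections at every stage. The base case ($F_1\cong\mathbb{F}_2$, with self-intersections $\pm 2$ computed from $Z_0^2=0$ on $E$ and $Z_0^2=2$ on $\widetilde{R}$) anchors the induction. Without this strengthened hypothesis your induction does not go through as stated, so you should either reproduce this argument or supply an equivalent reason why the invariant curves always have nonzero self-intersection on the exceptional surface.
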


\begin{proof}
Let $F$ be a~$G$-invariant prime divisor over $X$ such that $F$ is different from $R$, $E$,~$R^\prime$.
Then its center on $\widetilde{X}$ is one of the~curves $E\cap\widetilde{R}$ or~$C^\prime$.
Keeping in mind Remark~\ref{remark:3-17}, we~may assume that its center on $\widetilde{X}$ is $E\cap\widetilde{R}$.
Let us show that $F$ is an~exceptional divisor of a~weighted blow up between the~surfaces $E$ and $\widetilde{R}$,

Let $V_0=X$ and $Z_0=E\cap\widetilde{R}$. Then there exists a~sequence of $G$-equivariant blow ups
$$
\xymatrix{
V_m\ar@{->}[rr]^{\theta_m}&&V_{m-1}\ar@{->}[rr]^{\theta_{m-1}}&&\cdots\ar@{->}[rr]^{\theta_2}&&V_1\ar@{->}[rr]^{\theta_1}&& V_0}
$$
such that $\theta_1$ is the~blow up of the~curve $Z_0$,
the surface $F$ is the~$\theta_m$-exceptional surface,
the morphism $\theta_k$ is a~blow up of a~$G$-invariant irreducible smooth curve $Z_{k-1}\subset V_{k-1}$
such that the~curve $Z_{k-1}$ is contained in the~$\theta_{k-1}$-exceptional surface provided that $k\geqslant 2$.

For every $k\in\{1,\ldots,m\}$, let $F_k$ be the~$\theta_k$-exceptional surface, so that we have $F=F_m$.
To prove that $F=F_m$ is an~exceptional divisor of a~weighted blow up between $E$ and $\widetilde{R}$,
it~sufficient to prove the~following assertion for every $k$:
\begin{itemize}
\item the~surface $F_k$ contains exactly two $\mathrm{PGL}_2(\mathbb{C})$-invariant irreducible curves,
\item the~two $\mathrm{PGL}_2(\mathbb{C})$-invariant irreducible curves in $F_k$ are disjoint,
\item if $\mathscr{C}$ is a~$\mathrm{PGL}_2(\mathbb{C})$-invariant irreducible curve in $F_k$, then $\mathscr{C}$ is cut out by the~strict transform of one of the~following surfaces:
\begin{itemize}
\item the~surface $F_r$ for some $r\in\{1,\ldots,m\}$ such that $r\ne k$,
\item the~surface $E$,
\item the~surface~$\widetilde{R}$.
\end{itemize}
\end{itemize}
Clearly, it is enough to prove this assertion only for $k=m$. Let us do this.

Let $F_0=E$ and $F_{-1}=\widetilde{R}$. For every $k\in\{-1,0,1,\ldots,m-1\}$, let $\overline{F}_k$ be the~proper transform
of the~surface $F_k$ on the~threefold $V_m$. We claim that
\begin{itemize}
\item[(i)] $F_m\cong\mathbb{F}_n$ for some $n>0$;
\item[(ii)] the~surface $F_m$ contains exactly two $\mathrm{PGL}_2(\mathbb{C})$-invariant irreducible curves,
\item[(iii)] the~two $\mathrm{PGL}_2(\mathbb{C})$-invariant irreducible curves in $F_m$ are disjoint,
\item[(iv)] if $\mathscr{C}$ is a~$\mathrm{PGL}_2(\mathbb{C})$-invariant irreducible curve in $F_m$, then $\mathscr{C}^2\in\{-n,n\}$,
\item[(v)] if $\mathscr{C}$ is a~$\mathrm{PGL}_2(\mathbb{C})$-invariant irreducible curve in $F_m$, then
$$
\mathscr{C}=F_m\cap\overline{F}_r
$$
for some $r\in\{-1,0,1,\ldots,m-1\}$ and the~following assertions hold:
\begin{itemize}
\item if $\mathscr{C}^2=n$ on the~surface $F_m$, then $\mathscr{C}^2\leqslant 0$ on the~surface  $\overline{F}_r$,
\item if $\mathscr{C}^2=-n$ on the~surface $F_m$, then $\mathscr{C}^2>0$ on the~surface  $\overline{F}_r$.
\end{itemize}
\end{itemize}
Let us prove this (stronger than we need) statement by induction on $m$.

Suppose that $m=1$.
We already know that $F_0=E\cong\mathbb{P}^1 \times \mathbb{P}^1$ and $F_{-1}=\widetilde{R}\cong\mathbb{P}^1\times \mathbb{P}^1$.
Moreover, we have $Z_0^2=0$ on the~surface $F_0$, and we have $Z_0^2=2$ on the~surface $F_{-1}$.
Then $F_1\cong\mathbb{F}_2$ by Lemma~\ref{lemma:iterated-blowup-Fn}.
Moreover, since $\mathrm{PGL}_2(\mathbb{C})$ acts faithfully on the~curve $Z_0$, it acts faithfully on the~surface $F_1$.
Furthermore, if $\mathscr{C}$ is a~$\mathrm{PGL}_2(\mathbb{C})$-invariant irreducible curve in $F_1$,
then it follows from \cite[Theorem~5.1]{Mabuchi} that either $\mathscr{C}=\overline{F}_0\cap F_1$ or $\mathscr{C}=\overline{F}_{-1}\cap F_1$.
Using Lemma~\ref{lemma:iterated-blowup-Fn} again, we see that
\begin{itemize}
\item if $\mathscr{C}=\overline{F}_0\cap F_1$, then $\mathscr{C}^2=2$ on the~surface $F_1$, and $\mathscr{C}^2=0$ on the~surface $\overline{F}_0$,
\item if $\mathscr{C}=\overline{F}_{-1}\cap F_1$, then $\mathscr{C}^2=-2$ on the~surface $F_1$, while $\mathscr{C}^2=2$ on the~surface~$\overline{F}_0$.
\end{itemize}
Thus, we conclude that our claim holds for $m=1$. This is the~base of induction.

Suppose that our claim  holds for $m\geqslant 1$. Let us show that it holds for $m+1$ blow ups.
Let $\mathscr{C}$ be a~$\mathrm{PGL}_2(\mathbb{C})$-invariant irreducible curve in $F_m$,
let $\Theta\colon \mathcal{V}\to V_m$ be its blow up,
and let $\mathcal{F}$ be the~$\Theta$-exceptional surface.
By induction, we know that $F_m\cong\mathbb{F}_n$ for $n>0$.
Moreover, we also know that
$$
\mathscr{C}=F_m\cap\overline{F}_r
$$
for some $r\in\{-1,0,1,\ldots,m-1\}$. Furthermore, one of the~following two assertions holds:
\begin{itemize}
\item either $\mathscr{C}^2=n>0$ on the~surface $F_m$, and $\mathscr{C}^2\leqslant 0$ on  the~surface $\overline{F}_r$,
\item or $\mathscr{C}^2=-n<0$ on  the~surface $F_m$, and $\mathscr{C}^2>0$ on  the~surface $\overline{F}_r$.
\end{itemize}

Let $\mathcal{F}_m$ and $\mathcal{F}_r$ be the~strict transforms on $\mathcal{V}$ of the~surfaces $F_m$ and $\overline{F}_r$, respectively.
Then $\mathcal{F}\cap\mathcal{F}_m$ and $\mathcal{F}\cap\mathcal{F}_r$ are disjoint $\mathrm{PGL}_2(\mathbb{C})$-invariant irreducible curves
that are sections of the~ projection $\mathcal{F}\to\mathscr{C}$.
Let $\gamma$ be the~self-intersection $\mathscr{C}^2$ on the~surface $\overline{F}_r$.
Then it follows from Lemma~\ref{lemma:iterated-blowup-Fn} that $F_{m+1}\cong\mathbb{F}_{s}$ for
$$
s=n+|\gamma|>0.
$$
Thus, by \cite[Theorem~5.1]{Mabuchi}, the~curves $\mathcal{F}\cap\mathcal{F}_m$ and $\mathcal{F}\cap\mathcal{F}_r$
are the~only $\mathrm{PGL}_2(\mathbb{C})$-invariant irreducible curves in the~surface $\mathcal{F}$.
Let $\mathcal{C}_1=\mathcal{F}\cap\mathcal{F}_m$ and $\mathcal{C}_2=\mathcal{F}\cap\mathcal{F}_r$.

Suppose that $\mathscr{C}^2=n$ on the~surface $F_m$. In this case, we have $\gamma\leqslant 0$ and $s=n-\gamma>0$.
By Lemma~\ref{lemma:iterated-blowup-Fn}, we have $\mathcal{C}_1^2=n>0$ on the~surface $\mathcal{F}_m$, and $\mathcal{C}_1^2=-s$ on the~surface~$\mathcal{F}$.
Similarly, we see that $\mathcal{C}_2^2=\gamma\leqslant 0$ on the~surface $\mathcal{F}_r$, and $\mathcal{C}_2^2=s>0$ on the~surface $\mathcal{F}$.
Thus, we see that the~required claim holds for $m+1$ blow ups in this case.

Finally, we suppose that $\mathscr{C}^2=-n$ on the~surface $F_m$. Then $\gamma>0$ and $s=n+\gamma>0$.
By~Lemma~\ref{lemma:iterated-blowup-Fn}, we have $\mathcal{C}_1^2=-n<0$ on the~surface $\mathcal{F}_m$, and $\mathcal{C}_1^2=s$ on the~surface~$\mathcal{F}$.
Similarly, we have $\mathcal{C}_2^2=\gamma>0$ on the~surface $\mathcal{F}_r$, and $\mathcal{C}_2^2=-s<0$ on the~surface $\mathcal{F}$.
Therefore, we proved that the~required claim holds for $m+1$ blow up also in this case.
Hence, it holds for any number of blow ups (by induction).
\end{proof}

By Lemmas~\ref{lemma:3-17-beta-R}, \ref{lemma:3-17-beta-E}, \ref{lemma:3-17-R-prime}, we have $\beta(R)>0$, $\beta(E)>0$, $\beta(R^\prime)>0$, respectively.
Thus, to~prove that $X$ is K-polystable, it is enough to check that $\beta(F)>0$ in the~following cases:
\begin{enumerate}
\item when $F$ is the~$(a,b)$-divisor between $E$ and $\widetilde{R}$,

\item when $F$ is the~$(a,b)$-divisor between  $\widehat{E}$ and $R^\prime$.
\end{enumerate}
We start with the~first case.

\begin{proposition}
\label{proposition:3-17-beta-E-R}
Let $\nu\colon Y\to\widetilde{X}$ be the~$(a,b)$-blow up between the~surfaces $E$ and~$\widetilde{R}$,
and let $F$ be the~$\nu$-exceptional surface. Then $\beta(F)>0$.
\end{proposition}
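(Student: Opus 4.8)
The plan is to bound $S_X(F)$ from above by comparing $F$ with the divisor $E$, in the spirit of the proof of Lemma~\ref{lemma:3-17-R-prime}. Write $g=f\circ\nu\colon Y\to X$, set $Z_0=E\cap\widetilde R$, and let $\mathcal E$ be the proper transform of $E$ on $Y$. Since $Z_0$ is a Cartier divisor in the smooth surface $E$, the weighted blow up $\nu$ restricts to an isomorphism $\mathcal E\xrightarrow{\ \sim\ }E\cong\mathbb P^1\times\mathbb P^1$; let $\mathfrak f$ be the class of a fibre of $\mathcal E\to C$ and $\mathfrak g$ the other ruling class. From $K_{\widetilde X}\sim f^*(K_X)+E$, from $K_Y\sim\nu^*(K_{\widetilde X})+(a+b-1)F$, and from $\nu^*(E)\sim\mathcal E+aF$ (the weight $a$ being attached to the surface $E$), we get $K_Y\sim g^*(K_X)+\mathcal E+(2a+b-1)F$, so $A_X(F)=2a+b$. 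A short computation with the intersection numbers on $\widetilde X$ recorded in Lemma~\ref{lemma:3-17-beta-E} (namely $E^3=-4$ and $f^*(-K_X)\cdot E^2=-6$, whence $(-K_X)\cdot C=6$ and $E\cdot Z_0=2$), together with $Z_0^2=0$ on $E$ and the fact that $Z_0$ is a section of $E\to C$, shows that on $\mathcal E$
$$g^*(-K_X)\big|_{\mathcal E}=6\mathfrak f,\qquad \mathcal E\big|_{\mathcal E}=2\mathfrak f-(1+a)\mathfrak g,\qquad F\big|_{\mathcal E}=[Z_0]=\mathfrak g .$$

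The heart of the argument is the estimate
$$\mathrm{vol}\big(g^*(-K_X)-yF\big)\ \leqslant\ \mathrm{vol}_{\widetilde X}\Big(f^*(-K_X)-\tfrac{y}{1+a}E\Big)\qquad\text{for every }y\geqslant 0 .$$
To prove it I would show that $\mathcal E$ lies in the asymptotic base locus of $g^*(-K_X)-yF$ with multiplicity at least $\tfrac{y}{1+a}$. Indeed, for $m$ sufficiently divisible and $s\in H^0\big(m(g^*(-K_X)-yF)\big)$, put $k=\operatorname{ord}_{\mathcal E}\big(\mathrm{div}(s)\big)$; restricting $\mathrm{div}(s)-k\mathcal E$ to $\mathcal E$ gives an effective divisor of class $(6m-2k)\mathfrak f+\big(k(1+a)-my\big)\mathfrak g$, and since the effective cone of $\mathbb P^1\times\mathbb P^1$ is spanned by $\mathfrak f$ and $\mathfrak g$ this forces $k\geqslant\tfrac{my}{1+a}$. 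Hence $h^0\big(m(g^*(-K_X)-yF)\big)=h^0\big(m(g^*(-K_X)-yF)-\tfrac{my}{1+a}\mathcal E\big)$, and using the identity
$$g^*(-K_X)-yF-\tfrac{y}{1+a}\mathcal E\ \sim_{\mathbb Q}\ \nu^*\Big(f^*(-K_X)-\tfrac{y}{1+a}E\Big)-\tfrac{y}{1+a}F$$
together with $h^0\big(Y,m\nu^*D\big)=h^0\big(\widetilde X,mD\big)$ and the monotonicity $h^0(D'-F')\leqslant h^0(D')$, the estimate follows on letting $m\to\infty$.

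Integrating the estimate and substituting $y=(1+a)x$ gives
$$S_X(F)=\frac{1}{-K_X^3}\int_0^\infty\mathrm{vol}\big(g^*(-K_X)-yF\big)\,dy\ \leqslant\ (1+a)\,S_X(E)=\frac{11(1+a)}{9},$$
so that, since $a\geqslant 1$ and $b\geqslant 1$,
$$\beta(F)=A_X(F)-S_X(F)\ \geqslant\ 2a+b-\frac{11(1+a)}{9}=\frac{7a+9b-11}{9}\ \geqslant\ \frac{5}{9}\ >\ 0 .$$

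The step I expect to be the most delicate is the divisibility claim $k\geqslant\tfrac{my}{1+a}$ — that is, verifying that $\mathcal E$ genuinely sits in the base locus of $g^*(-K_X)-yF$ with the stated multiplicity — since this is precisely where the geometry of $\mathcal E$ enters; one reduces it to the displayed restriction formulas on $\mathcal E$. I would also check carefully that $\mathcal E\cong\mathbb P^1\times\mathbb P^1$ with $F|_{\mathcal E}=\mathfrak g$ really holds for all weights $(a,b)$ and not just for the ordinary blow up (it follows because $Z_0$ is Cartier in $E$, so the induced modification of $E$ is trivial), and keep in mind that $Y$ is only $\mathbb Q$-factorial, so one works with $\mathbb Q$-divisors and sufficiently divisible $m$ throughout; everything else is the same bookkeeping as in the preceding lemmas.
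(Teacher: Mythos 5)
Your strategy is the same one the paper uses in the first half of its own proof: show that the proper transform $\mathcal E$ of $E$ lies in the asymptotic base locus of $g^*(-K_X)-yF$ with a definite multiplicity, and thereby bound $S_X(F)$ by a multiple of $S_X(E)$. The problem is the restriction formulas on $\mathcal E$, which you yourself flagged as the delicate point: they are wrong for $b>1$. The threefold $Y$ has a cyclic quotient singularity of index $b$ along the curve $\mathcal E\cap F$ (in the local toric model the cone spanned by the rays of $\mathcal E$ and $F$, namely $(1,0)$ and $(a,b)$, has index $b$), so $F$ is only $\mathbb Q$-Cartier there and $F\vert_{\mathcal E}=\frac{1}{b}[Z_0]=\frac{1}{b}\mathfrak g$, whence $\mathcal E\vert_{\mathcal E}=2\mathfrak f-\frac{a+b}{b}\mathfrak g$. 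The isomorphism $\mathcal E\cong E$ is correct but does not determine the coefficient of $F\vert_{\mathcal E}$; compare the paper's own computation $F\cdot\ell=\frac1a$, $\overline E\cdot\ell=-\frac{a+b}{a}$ (the paper attaches the weights in the opposite order, so its $a$ is your $b$). With the corrected classes your effectivity argument on $\mathcal E$ only yields $k\geqslant\frac{my}{a+b}$, hence $\mathrm{vol}(g^*(-K_X)-yF)\leqslant\mathrm{vol}\big(f^*(-K_X)-\frac{y}{a+b}E\big)$ and $S_X(F)\leqslant(a+b)S_X(E)=\frac{11}{9}(a+b)$. Your stronger inequality with $\frac{y}{1+a}$ is in fact false whenever $b\geqslant 4$: for $3(1+a)<y<3a+b\leqslant\tau(F)$ the left-hand side is positive while the right-hand side vanishes, because $\frac{y}{1+a}>3=\tau(E)$.

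The corrected estimate gives only $\beta(F)\geqslant 2a+b-\frac{11}{9}(a+b)=\frac{7a-2b}{9}$, which is positive precisely when the weight on $E$ exceeds $\frac{2}{7}$ of the weight on $\widetilde R$. That is exactly the paper's first case, and the remaining range $2b\geqslant 7a$ is a genuine gap in your argument. The paper closes it with a second, finer computation: once $y$ exceeds twice the weight on $E$, the proper transform of $\widetilde R$ also enters the negative part of the Zariski decomposition (its intersection with a fibre of $\overline R\to\phi(C)$ becomes negative), and splitting the integral there gives $S_X(F)\leqslant(a+b)\big(S_X(E,\frac{2a}{a+b})+S_X(R)\big)<\frac59(a+b)+\frac49(a+b)=a+b<A_X(F)$, using the partial integral $S_X(E,t)$ computed in Lemma~\ref{lemma:3-17-beta-E} and $S_X(R)=\frac49$ from Lemma~\ref{lemma:3-17-beta-R}. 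Some argument of this second kind is unavoidable: the single comparison with $E$ cannot prove the proposition for all weights.
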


\begin{proof}
Let $\overline{E}_1$, $\overline{E}_2$, $\overline{E}$, $\overline{R}$ be the~proper transforms on $Y$ of the~surfaces $E_1$, $E_2$, $E$, $\widetilde{R}$, respectively.
Take a~non-negative real number $x$. Put $\eta=f\circ \nu$. Then
$$
\eta^*(-K_X)-xF\sim_{\mathbb{R}}\overline{E}_1+\overline{E}_2+\overline{R}+3\overline{E}+(a+3b-x)F,
$$
so that the~pseudoeffective threshold $\tau=\tau(F)$ is at least $a+3b$.

Suppose that $x<\tau$.
Then $\overline{E}$ lies in the~stable base locus of the~divisor $\eta^*(-K_X)-xF$.
Moreover, we claim that the~positive part of the~Zariski decomposition of this divisor  has the~following form:
$$
\eta^*(-K_X)-\frac{t}{a+b}\overline{E}-xF-D
$$
for an~effective $\mathbb{R}$-divisor $D$.
Indeed, if $\ell$ is a~general fiber of the~projection $\overline{E}\to C$, then
$$
\Big(\eta^*(-K_X)-xF\Big)\cdot\ell=-\frac{x}{a},
$$
because $\eta^*(-K_X)\cdot\ell=0$ and $F\cdot\ell=\frac{1}{a}$.
On the~other hand, we have $\overline{E}\cdot\ell=-\frac{a+b}{a}$, which implies the~required claim.
Thus, if $7b>2a$, then
$$
S_X(F)\leqslant (a+b)S_X(E)=\frac{11}{9}(a+b),
$$
because $S_X(E)=\frac{11}{9}$ by Lemma~\ref{lemma:3-17-beta-E}. Thus, if $\frac{b}{a}>\frac{2}{7}$, then
$$
\beta(F)=A_X(F)-S_X(F)=a+2b-S_X(F)\geqslant a+2b-\frac{11}{9}(a+b)=\frac{7b-2a}{9}>0
$$
as required. Hence, we may assume that $\frac{b}{a}\leqslant\frac{2}{7}$.

If $x>2b$, then the~surface $\overline{R}$ lies in the~stable base locus of the~divisor $\eta^*(-K_X)-xF$.
Moreover, in this case,  the~Zariski decomposition of this divisor has the~following the~form:
$$
\eta^*(-K_X)-\frac{t}{a+b}\overline{E}-\frac{t-2b}{a+b}\overline{R}-xF-D
$$
for some effective $\mathbb{R}$-divisor $D$ (supported in $\overline{E}_1$, $\overline{E}_2$, $\overline{E}$, $\overline{R}$, $F$).
Indeed, if $\ell$ is a~general fiber of the~natural projection $\overline{R}\to \phi(C)$. Then $\overline{R}\cdot\ell=-\frac{a+b}{b}$ and
$$
\Big(\eta^*(-K_X)-xF\Big)\cdot\ell=2-\frac{x}{b},
$$
which implies that the~Zariski decomposition has the~required form for $x>2b$. Then
\begin{multline*}
S_X(F)\leqslant\frac{1}{36}\int_0^{2b}\!\mathrm{vol}\left(\varphi^*(-K_X)-\frac{t}{a+b} E\right) dt\,+
 \frac{1}{36}\int_{2b}^\infty\!\mathrm{vol}\left(\varphi^*(-K_X)- \frac{t-2b}{a+b} R\right) dt=\\
=(a+b)\cdot S\left(E,\frac{2b}{a+b}\right)+(a+b)\cdot S(R)<\frac{5}{9}(a+b) + \frac{4}{9}(a+b)=a+b.
\end{multline*}
because we have $S(R)=\frac{4}{9}$ by Lemma~\ref{lemma:3-17-beta-R}, and we have $S\!\left(E,\frac{2b}{a+b}\right)<\frac{5}{9}$ by Lemma~\ref{lemma:3-17-beta-E}.
This gives $\beta(F)>0$, since $A_X(F)=a+2b$.
\end{proof}

Finally, we deal with $(a,b)$-divisors between  $\widehat{E}$ and $R^\prime$.

\begin{proposition}
\label{proposition:3-17-beta-E-R-prime}
Let $\nu\colon Y\to\widehat{X}$ be the~$(a,b)$-blow up between the~surfaces $\widehat{E}$ and~$R^\prime$,
and let $F$ be the~$\nu$-exceptional surface. Then $\beta(F)>0$.
\end{proposition}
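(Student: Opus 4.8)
The plan is to argue as in the proof of Proposition~\ref{proposition:3-17-beta-E-R}, noting that here the estimate is in fact simpler because $A_X(R^\prime)=3$ is large. Write $\eta=f\circ g\circ\nu\colon Y\to X$, and let $\overline E_1,\overline E_2,\overline E,\overline R,\overline{R}^\prime$ be the proper transforms on $Y$ of the surfaces $\widehat E_1,\widehat E_2,\widehat E,\widehat R,R^\prime$. First I would note that the center $\widehat E\cap R^\prime$ of the weighted blow up $\nu$ is contained in none of $\widehat E_1$, $\widehat E_2$, $\widehat R$: indeed $\widehat R\cap R^\prime=\varnothing$, while each $\widehat E_i$ meets $\widehat E$ along a curve different from $\widehat E\cap R^\prime$. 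Hence, pulling back the relation $(f\circ g)^*(-K_X)\sim_{\mathbb R}\widehat E_1+\widehat E_2+\widehat R+3\widehat E+3R^\prime$ through $\nu$ yields
\[
\eta^*(-K_X)\sim_{\mathbb R}\overline E_1+\overline E_2+\overline R+3\overline E+3\overline{R}^\prime+3(a+b)F ,
\]
so that the pseudoeffective threshold $\tau(F)$ is at least $3(a+b)$. I would also record the log discrepancy: by the quasi-monomial valuation formula (equivalently, from $K_{\widehat X}=(f\circ g)^*K_X+\widehat E+2R^\prime$ together with $A_X(\widehat E)=2$ and $A_X(R^\prime)=3$), one has $A_X(F)=2a+3b$ or $A_X(F)=3a+2b$ according to the weight convention, and in either case $A_X(F)\geqslant 2(a+b)$.

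Next I would analyze the Zariski decomposition of $\eta^*(-K_X)-xF$ for $0\leqslant x<\tau(F)$. As in Lemma~\ref{lemma:3-17-R-prime} and Proposition~\ref{proposition:3-17-beta-E-R}, for a general fiber $\ell$ of the projection $\overline E\to C$ one has $\eta^*(-K_X)\cdot\ell=0$, $F\cdot\ell>0$ and $\overline E\cdot\ell<0$, with $F\cdot\ell=\tfrac{1}{a+b}\,(-\overline E\cdot\ell)$; since $\overline E$ is the only component of the negative part of the Zariski decomposition meeting a general such $\ell$, the coefficient of $\overline E$ in that negative part is at least $\tfrac{x}{a+b}$. (By symmetry the same holds for $\overline{R}^\prime$, using a general fiber of $\overline{R}^\prime\to C^\prime$, but I will not need this.) Therefore the positive part $P$ of the decomposition satisfies $P\leqslant\eta^*(-K_X)-\tfrac{x}{a+b}\overline E$, and so
\[
\mathrm{vol}\big(\eta^*(-K_X)-xF\big)\leqslant\mathrm{vol}\Big(\eta^*(-K_X)-\tfrac{x}{a+b}\overline E\Big) .
\]

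To conclude I would integrate over $x$ and substitute $t=\tfrac{x}{a+b}$. As $\mathrm{vol}(\eta^*(-K_X)-t\overline E)$ depends only on the valuation $\mathrm{ord}_E$, it coincides with $\mathrm{vol}(f^*(-K_X)-tE)$, which vanishes for $t>\tau(E)=3$; hence, by Lemma~\ref{lemma:3-17-beta-E},
\[
S_X(F)\leqslant(a+b)\,S_X(E)=\tfrac{11}{9}(a+b) .
\]
Combined with $A_X(F)\geqslant 2(a+b)$ this gives
\[
\beta(F)=A_X(F)-S_X(F)\geqslant\Big(2-\tfrac{11}{9}\Big)(a+b)=\tfrac{7}{9}(a+b)>0 ,
\]
as required; in particular, unlike in Proposition~\ref{proposition:3-17-beta-E-R}, no case distinction on the ratio of the weights is needed, the point being that $A_X(R^\prime)=3$ alone already forces $A_X(F)>S_X(F)$. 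The step I expect to demand the most care is the pair of weighted-blow-up intersection numbers $F\cdot\ell$ and $\overline E\cdot\ell$ and the ensuing claim about the shape of the Zariski decomposition; but this is entirely parallel to the corresponding computations in Lemma~\ref{lemma:3-17-R-prime} and Proposition~\ref{proposition:3-17-beta-E-R}.
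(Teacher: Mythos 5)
Your overall strategy is the same as the paper's: bound the coefficient of $\overline{E}$ in the negative part of the Zariski decomposition of $\eta^*(-K_X)-xF$ by intersecting with a general fiber $\ell$ of $\overline{E}\to C$, deduce $\mathrm{vol}(\eta^*(-K_X)-xF)\leqslant\mathrm{vol}(f^*(-K_X)-t E)$ for an appropriate $t$, and integrate against Lemma~\ref{lemma:3-17-beta-E}. However, there is a genuine error in the one computation you flagged as needing the most care. You assert $F\cdot\ell=\frac{1}{a+b}(-\overline{E}\cdot\ell)$, i.e.\ that the coefficient of $\overline{E}$ forced by $\ell$ is $\frac{x}{a+b}$. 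This is the ratio from Proposition~\ref{proposition:3-17-beta-E-R}, where the relevant surface is $E$ on $\widetilde{X}$ and a general fiber $\ell_0$ of $E\to C$ satisfies $E\cdot\ell_0=-1$. Here the relevant surface is $\widehat{E}$ on $\widehat{X}$, and a general fiber $\ell_0$ of $\widehat{E}\to C$ meets the blown-up curve $C^\prime$, so $\widehat{E}\cdot\ell_0=-2$ (this is exactly the intersection number recorded in the proof of Lemma~\ref{lemma:3-17-R-prime}). Consequently, with the weight $a$ attached to $R^\prime$ one gets $F\cdot\ell=\frac{1}{a}$ and $\overline{E}\cdot\ell=-2-\frac{b}{a}=-\frac{2a+b}{a}$, so the forced coefficient is only $\frac{x}{2a+b}$, and the resulting bound is $S_X(F)\leqslant(2a+b)S_X(E)=\frac{11}{9}(2a+b)$, not $\frac{11}{9}(a+b)$.

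This is not a cosmetic slip, because your argument deliberately discards the precise value of $A_X(F)$ in favour of the crude bound $A_X(F)\geqslant 2(a+b)$. With the corrected estimate $S_X(F)\leqslant\frac{11}{9}(2a+b)$, the inequality $2(a+b)-\frac{11}{9}(2a+b)=\frac{-4a+7b}{9}$ is negative whenever $a>\frac{7}{4}b$, so your conclusion does not follow. To close the gap you must keep the exact log discrepancy with the weights matched consistently to the intersection computation: with $a=\mathrm{ord}_F(R^\prime)$ and $b=\mathrm{ord}_F(\widehat{E})$ one has $A_X(F)=(a+b)+b\cdot A_{\widehat X/X}(\widehat E)$-type bookkeeping giving $A_X(F)=3a+2b$, and then
$$
\beta(F)\geqslant 3a+2b-\frac{11}{9}(2a+b)=\frac{5a+7b}{9}>0,
$$
which is the paper's conclusion (the analogous computation in the opposite weight convention gives $\frac{7a+5b}{9}$). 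The rest of your setup (the coefficient $3(a+b)$ of $F$ in $\eta^*(-K_X)$, the disjointness claims, and the identification $\mathrm{vol}(\eta^*(-K_X)-t\overline{E})=\mathrm{vol}(f^*(-K_X)-tE)$) is correct.
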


\begin{proof}
Let $\overline{E}_1$, $\overline{E}_2$, $\overline{E}$, $\overline{R}$, $\overline{R}^\prime$ be the~proper transforms on $Y$ of $E_1$, $E_2$, $E$, $\widetilde{R}$,~$R^\prime$, respectively.
Take a~non-negative real number $x$. Put $\eta=f\circ g\circ \nu$. Then
$$
\eta^*(-K_X)-xF\sim_{\mathbb{R}}\overline{E}_1+\overline{E}_2+\overline{R}+3\overline{E}+3\overline{R}^\prime+(3a+3b-x)F,
$$
so that the~pseudoeffective threshold $\tau=\tau(F)$ is at least $3a+3b$.

Suppose that $x<\tau$. Then $\overline{E}$ lies in the~stable base locus of the~divisor $\eta^*(-K_X)-xF$.
Moreover, we claim that the~positive part of the~Zariski decomposition of this divisor  has the~following form:
$$
\eta^*(-K_X)-\frac{t}{2a+b}\overline{E}-xF-D
$$
for an~effective $\mathbb{R}$-divisor $D$.
Indeed, if $\ell$ is a~general fiber of the~projection $\overline{E}\to C$, then
$$
\Big(\eta^*(-K_X)-xF\Big)\cdot\ell=-\frac{x}{a},
$$
because $\eta^*(-K_X)\cdot\ell=0$ and $F\cdot\ell=\frac{1}{a}$.
On the~other hand, we have $\overline{E}\cdot\ell=-\frac{2a+b}{a}$, which implies the~required claim.
Thus, we have
$$
S_X(F)\leqslant (2a+b)S_X(E)=\frac{11}{9}(2a+b),
$$
because $S_X(E)=\frac{11}{9}$ by Lemma~\ref{lemma:3-17-beta-E}. Then
$$
\beta(F)=A_X(F)-S_X(F)=3a+2b-S_X(F)\geqslant 3a+2b-\frac{11}{9}(a+b)=\frac{5a+7b}{9}>0
$$
as required.
\end{proof}

Thus, we see that $\beta(F)>0$ for every $G$-invariant prime divisor $F$ over the~threefold~$X$.
Then $X$ is K-polystable by Theorem~\ref{theorem:Fujita-Li}.

\section{Blow up of a~complete intersection of two quadrics in a~conic}
\label{section:2-16}

Let $Q_1=\{f=0\}\subset\mathbb{P}^5$, where
$f=x_0x_3+x_1x_4+x_2x_5$,
and let $Q_2=\{g=0\}\subset\mathbb{P}^5$, where
$g=x_0^2+\omega x_1^2+\omega^2x_2^2+(x_3^2+\omega x_4^2+\omega^2x_5^2)+(x_0x_3+\omega x_1x_4+\omega^2x_2x_5)$,
and $\omega$ is a~primitive cubic root of unity. Let $V_4=Q_1\cap Q_2$. Then $V_4$ is smooth.
Let $G$ be a~subgroup in $\mathrm{Aut}(\mathbb{P}^5)$ such that $G\cong\mumu_2^2\rtimes\mumu_3$,
where the~generator of $\mumu_3$ acts by
$$
\big[x_0:x_1:x_2:x_3:x_4:x_5\big]\mapsto \big[x_1:x_2:x_0:x_4:x_5:x_3\big],
$$
the generator of the~first factor of $\mumu_2^2$ acts by
$$
\big[x_0:x_1:x_2:x_3:x_4:x_5\big]\mapsto\big[-x_0:x_1:-x_2:-x_3:x_4:-x_5\big],
$$
and the~generator of the~second factor of $\mumu_2^2$ acts by
$$
\big[x_0:x_1:x_2:x_3:x_4:x_5\big]\mapsto\big[-x_0:-x_1:x_2:-x_3:-x_4:x_5\big].
$$
Then $G\cong\mathfrak{A}_4$, and $\mathbb{P}^5=\mathbb{P}(\mathbb{U}_3\oplus\mathbb{U}_3)$,
where $\mathbb{U}_3$ is the~unique (unimodular) irreducible three-dimensional representation of the~group $G$.
Note that $Q_1$ and $Q_2$ are $G$-invariant, so that $V_4$ is also $G$-invariant.
Thus, we may identify $G$ with a~subgroup in $\mathrm{Aut}(V_4)$.

Let $\tau$ be an~involution in $\mathrm{Aut}(\mathbb{P}^5)$ that is given by
$$
\big[x_0:x_1:x_2:x_3:x_4:x_5\big]\mapsto \big[x_3:x_4:x_5:x_0:x_1:x_2\big].
$$
Then $Q_1$ and $Q_2$ are $\tau$-invariant, so that $V_4$ is also $\tau$-invariant.

The group $G$ does not have fixed points in $\mathbb{P}^5$, and there are no $G$-invariant lines in $\mathbb{P}^5$.
Moreover, every $G$-invariant plane in $\mathbb{P}^5$ is given by
$$
\left\{\aligned
&\lambda x_0+\mu x_3=0,\\
&\lambda x_1+\mu x_4=0,\\
&\lambda x_2+\mu x_5=0,\\
\endaligned
\right.
$$
where $[\lambda:\mu]\in\mathbb{P}^1$.
Using this, we see that $V_4$ contains exactly four $G$-invariant conics.
These conics are cut out on $V_4$ by the~following $G$-invariant planes:
the plane $\Pi_1$ given by $x_0=x_1=x_2=0$, the~plane $\Pi_2=\tau(\Pi_1)$,
the plane $\Pi_3$ given by
$$
\left\{\aligned
&x_0=\omega x_3,\\
&x_1=\omega x_4,\\
&x_2=\omega x_5,\\
\endaligned
\right.
$$
and the~plane $\Pi_4=\tau(\Pi_3)$.
We let $C_1=V_4\cap\Pi_1$, $C_2=V_4\cap\Pi_2$, $C_3=V_4\cap\Pi_3$, $C_4=V_4\cap\Pi_4$.
Then the~conics $C_1$, $C_2$, $C_3$, $C_4$ are pairwise disjoint, $C_2=\tau(C_1)$ and $C_4=\tau(C_3)$,

For every $i\in\{1,2,3,4\}$, we let $\pi_i\colon X_i\to V_4$ be the~blow up of the~conic $C_i$,
and we denote by $E_i$ the~exceptional surface of the~blow up $\pi_i$.
Then $X_1\cong X_2$ and $X_3\cong X_4$ are smooth Fano threefolds \textnumero 2.16,
and the~action of the~group $G$ lifts to its action on them.

For every $i\in\{1,2,3,4\}$, we have the~following $G$-equivariant diagram:
$$
\xymatrix{
&X_i\ar@{->}[ld]_{\pi_i}\ar@{->}[rd]^{\eta_i}&\\%
V_4\ar@{-->}[rr]&&\mathbb{P}^2}
$$
where the~dashed arrow is a~linear projection from the~plane $\Pi_i$,
and $\eta_i$ is a~conic bundle that is given by the~linear system $|\pi_i^*(H)-E_i|$,
where $H$ is a~hyperplane section of the~threefold $V_4$.
In each case, we have $\mathbb{P}^2=\mathbb{P}(\mathbb{U}_3)$.

\begin{lemma}
\label{lemma:E1-E2-E3-E4}
One has $E_1\cong E_2\cong E_3\cong E_4\cong\mathbb{P}^1\times\mathbb{P}^1$.
\end{lemma}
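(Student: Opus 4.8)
The plan is to identify $E_i$ with the projectivization of the normal bundle $N_{C_i/V_4}$ and to show that this bundle is \emph{balanced}, i.e.\ $N_{C_i/V_4}\cong\mathcal{O}_{\mathbb{P}^1}(1)^{\oplus 2}$; since the exceptional surface of the blow-up of a smooth rational curve in a smooth threefold is the Hirzebruch surface $\mathbb{F}_n$ with $n$ the ``defect'' $|a-b|$ of the normal bundle $\mathcal{O}_{\mathbb{P}^1}(a)\oplus\mathcal{O}_{\mathbb{P}^1}(b)$, this forces $E_i\cong\mathbb{F}_0\cong\mathbb{P}^1\times\mathbb{P}^1$. Because $\tau$ induces isomorphisms $X_1\cong X_2$ and $X_3\cong X_4$ carrying $E_1$ to $E_2$ and $E_3$ to $E_4$, and because the computation below is uniform in $i$, it is enough to treat a single $C_i$.

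First I would reduce to the normal bundle of a plane in a smooth quadric fourfold. Inspection of the equations shows that $\Pi_1,\Pi_2\subset Q_1$ and $\Pi_3,\Pi_4\subset Q_2$; let $\widehat{Q}\in\{Q_1,Q_2\}$ be the one containing $\Pi_i$ and $Q'$ the other. Since $Q_1$ and $Q_2$ both have rank $6$, $\widehat{Q}$ is a smooth quadric fourfold, $V_4=\widehat{Q}\cap Q'$, and $C_i=\Pi_i\cap Q'$ is the conic cut out on $\Pi_i\cong\mathbb{P}^2$ by the restriction of $Q'$, which is a nondegenerate ternary quadratic form, so $C_i$ is smooth. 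In the smooth fourfold $\widehat{Q}$ the surface $\Pi_i$ and the prime divisor $V_4$ meet transversally along $C_i$ (the scheme $\Pi_i\cap V_4$ is the smooth divisor $C_i$ on the smooth surface $\Pi_i$), hence $N_{C_i/V_4}\cong N_{\Pi_i/\widehat{Q}}|_{C_i}$.

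Then I would compute $N_{\Pi_i/\widehat{Q}}$. Choosing coordinates so that $\widehat{Q}=\{y_0y_3+y_1y_4+y_2y_5=0\}$ and $\Pi_i=\{y_0=y_1=y_2=0\}$, the normal bundle sequence $0\to N_{\Pi_i/\widehat{Q}}\to N_{\Pi_i/\mathbb{P}^5}\to N_{\widehat{Q}/\mathbb{P}^5}|_{\Pi_i}\to 0$ becomes
$$0\longrightarrow N_{\Pi_i/\widehat{Q}}\longrightarrow\mathcal{O}_{\Pi_i}(1)^{\oplus 3}\longrightarrow\mathcal{O}_{\Pi_i}(2)\longrightarrow 0,$$
with surjection $(s_0,s_1,s_2)\mapsto y_3s_0+y_4s_1+y_5s_2$ (the partial derivatives of the quadratic form); twisting by $\mathcal{O}_{\Pi_i}(-1)$ this is exactly the Euler sequence of $\Pi_i\cong\mathbb{P}^2$ with homogeneous coordinates $y_3,y_4,y_5$, so $N_{\Pi_i/\widehat{Q}}\cong\Omega^1_{\Pi_i}(2)$. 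Finally, restricting the Euler sequence $0\to\Omega^1_{\mathbb{P}^2}(1)\to\mathcal{O}^{\oplus 3}\to\mathcal{O}(1)\to 0$ to $C_i$ and using $\mathcal{O}_{\mathbb{P}^2}(1)|_{C_i}\cong\mathcal{O}_{\mathbb{P}^1}(2)$, the induced map $\mathcal{O}_{\mathbb{P}^1}^{\oplus 3}\to\mathcal{O}_{\mathbb{P}^1}(2)$ is given by a basis of $H^0(\mathbb{P}^1,\mathcal{O}(2))$ and hence is an isomorphism on global sections; so $\Omega^1_{\mathbb{P}^2}(1)|_{C_i}$ is a rank-$2$ degree-$(-2)$ bundle on $\mathbb{P}^1$ with $h^0=0$, that is $\mathcal{O}_{\mathbb{P}^1}(-1)^{\oplus 2}$. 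Therefore $N_{C_i/V_4}\cong\Omega^1_{\Pi_i}(2)|_{C_i}\cong\mathcal{O}_{\mathbb{P}^1}(-1)^{\oplus 2}\otimes\mathcal{O}_{\mathbb{P}^1}(2)\cong\mathcal{O}_{\mathbb{P}^1}(1)^{\oplus 2}$, whence $E_i\cong\mathbb{P}(\mathcal{O}_{\mathbb{P}^1}^{\oplus 2})\cong\mathbb{P}^1\times\mathbb{P}^1$.

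I do not expect a genuine obstacle; the statement is classical in spirit. The only care needed is in the bookkeeping: verifying that each $\Pi_i$ lies on one of the two defining quadrics (so that $C_i$ is a residual conic rather than a finite set of points), that this conic is smooth, and that the transversality giving $N_{C_i/V_4}\cong N_{\Pi_i/\widehat{Q}}|_{C_i}$ holds — all immediate from the explicit equations. One could instead compute $N_{C_i/V_4}$ directly as $\ker\big(N_{C_i/\mathbb{P}^5}\to N_{V_4/\mathbb{P}^5}|_{C_i}\big)$, but pinning down that map needs the same explicit input, so routing through the plane in the quadric is the cleaner route.
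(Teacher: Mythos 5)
Your proof is correct, but it follows a genuinely different route from the paper. You compute the normal bundle $N_{C_i/V_4}$ directly: after observing that $\Pi_1,\Pi_2\subset Q_1$ and $\Pi_3,\Pi_4\subset Q_2$ (both claims check out against the explicit equations, e.g.\ $g$ restricted to $\Pi_3$ collapses via $1+\omega+\omega^2=0$), you reduce to $N_{\Pi_i/\widehat{Q}}\vert_{C_i}\cong\Omega^1_{\mathbb{P}^2}(2)\vert_{C_i}\cong\mathcal{O}_{\mathbb{P}^1}(1)^{\oplus 2}$, and the balanced normal bundle forces $E_i\cong\mathbb{F}_0$. The paper instead writes $E_i\cong\mathbb{F}_n$, uses $E_i^3=2+K_{V_4}\cdot C_i=-2$ to express $-E_i\vert_{E_i}$ in terms of $n$, invokes base-point-freeness of $|\pi_i^*(H)-E_i|$ to force $n\in\{0,2\}$, and excludes $n=2$ because the negative section would then be contracted by the conic bundle $\eta_i$ to a $G$-fixed point of $\mathbb{P}^2$, which does not exist. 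Your argument is entirely group-free and self-contained (it needs only that each $\Pi_i$ lies on one of the two smooth quadrics, that the residual conic is smooth, and the transversality giving $N_{C_i/V_4}\cong N_{\Pi_i/\widehat{Q}}\vert_{C_i}$ --- all of which you verify or could verify from the equations); the paper's argument avoids any normal bundle computation by leveraging the conic bundle structure and the $G$-action it has already set up for the rest of the section. Both are complete; as a sanity check, your $\deg N_{C_i/V_4}=2$ matches the paper's $E_i^3=-2$.
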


\begin{proof}
One has $E_i\cong\mathbb{F}_n$ for some integer $n\geqslant 0$. We have
$-E_i\vert_{E_i}\sim s_{E_i}+af_{E_i}$
where $s_{E_i}$ is a~section of the~projection $E_i\to C_i$ such that $s_{E_i}^2=-n$,
and $f_{E_i}$ is a~fiber of this projection. Since $E_i^3=2+K_{V_4}\cdot C_i=-2$, we have
$-2=(s_{E_i}+af_{E_i})^2=-n+2a$,
so that $a=\frac{n-2}{2}$. On the~other hand, we have $(\pi_i^*(H)-E_i)\vert_{E_i}\sim s_{E_i}+\frac{n+2}{2}f_{E_i}$.
Since $|\pi_i^*(H)-E_i|$ is base point free, we have $\frac{n+2}{2}\geqslant n$, so that either $n=0$ or $n=2$.
If~$n=2$, then $s_{E_i}$ is contracted by $\eta_i$ to a~point,
which is impossible, since $G$ does not have fixed points in $\mathbb{P}^2$.
Hence, we see that $n=0$, so that  $E_i\cong\mathbb{P}^1\times\mathbb{P}^1$.
\end{proof}

For each $i\in\{1,2,3,4\}$, let $\Delta_i$ be the~discriminant curve in $\mathbb{P}^2$ of the~conic bundle $\eta_i$.
Then $\Delta_i$ is a~(possibly reducible) quartic curve with at most ordinary double points.

\begin{lemma}
\label{lemma:2-16-discriminant}
The curves $\Delta_1$, $\Delta_2$, $\Delta_3$, $\Delta_4$ are smooth.
\end{lemma}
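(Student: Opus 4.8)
The plan is to use the large symmetry to confine each $\Delta_i$ to a one-parameter family of quartics, only three members of which are singular, and then to rule those three out by computing three fibres of $\eta_i$.

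First I would reduce to two cases. Since $C_2=\tau(C_1)$, $C_4=\tau(C_3)$ and $\tau\in\mathrm{Aut}(V_4)$, the involution $\tau$ carries the conic bundle $\eta_1$ to $\eta_2$ and $\eta_3$ to $\eta_4$, intertwining the two linear projections; hence $\Delta_2\cong\Delta_1$ and $\Delta_4\cong\Delta_3$, and it suffices to prove that $\Delta_1$ and $\Delta_3$ are smooth.

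Next, fix $i\in\{1,3\}$ and work on the target $\mathbb{P}^2=\mathbb{P}(\mathbb{U}_3)$. In the coordinates in which $\eta_i$ is defined — these are $[x_0:x_1:x_2]$ for $i=1$, and $[x_0-\omega x_3:x_1-\omega x_4:x_2-\omega x_5]$ for $i=3$ — the group $G\cong\mathfrak{A}_4$ acts through the cyclic permutation of the three coordinates together with the two diagonal sign changes, so the unique $G$-invariant conic is $\mathfrak{h}=\{u_0^2+u_1^2+u_2^2=0\}$, and a Molien-series computation shows that the space of $G$-invariant quartic forms is $2$-dimensional, spanned by $\mathfrak{h}^2$ and the Fermat quartic $u_0^4+u_1^4+u_2^4$. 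Since $\Delta_i$ is a $G$-invariant reduced quartic (by the discussion preceding the lemma), it is a member $\{(u_0^2+u_1^2+u_2^2)^2+t(u_0^4+u_1^4+u_2^4)=0\}$ of this pencil for some $t$. A short Jacobian-criterion analysis, distinguishing according to how many coordinates of a putative singular point vanish, then shows that — apart from the non-reduced member at $t=0$ — the pencil has exactly three singular members: at $t=-1$ (nodes at the three coordinate points), at $t=-2$ (a union of four lines, nodal along the $G$-orbit of $[1:1:0]$), and at $t=-3$ (nodal along the $G$-orbit of $[1:1:1]$). Consequently $\Delta_i$ is smooth provided it passes through none of the three points $[1:0:0]$, $[1:1:0]$, $[1:1:1]$, one representative of each of the three smallest $G$-orbits on $\mathbb{P}(\mathbb{U}_3)$.

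To finish I would compute the fibres of $\eta_i$ over $[1:0:0]$, $[1:1:0]$, $[1:1:1]$: for a point $q$ one spans the $3$-plane $\langle\Pi_i,q\rangle$, intersects it with $V_4$, discards the component $C_i$ (which lies on $Q_1$ when $i=1$ and on $Q_2$ when $i=3$), and reads off the residual conic as the restriction of the other quadric to a plane; a $3\times 3$ determinant then shows this conic is smooth in each case. For instance, the fibre of $\eta_1$ over $[1:0:0]$ is $\{x_0^2+\omega x_4^2+\omega^2x_5^2=0\}$ in the plane $\{x_1=x_2=x_3=0\}$, which is visibly smooth. The only laborious part is this last step — six fibre computations, three per bundle — and the main place to be careful is the residual-conic bookkeeping there: one must keep track of the fact that $\Pi_1,\Pi_2\subset Q_1$ while $\Pi_3,\Pi_4\subset Q_2$, which is exactly what makes $V_4\cap\langle\Pi_i,q\rangle$ split off a conic in the first place. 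Everything else (the $\tau$-reduction, the structure of the pencil, and the list of its singular members) is short.
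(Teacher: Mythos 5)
Your reduction of $\Delta_i$ to the pencil spanned by $\mathfrak{h}^2$ and the Fermat quartic contains a genuine gap: a $G$-invariant curve in $\mathbb{P}(\mathbb{U}_3)$ need not be cut out by a $G$-\emph{invariant} form, only by a semi-invariant one, and $G\cong\mathfrak{A}_4$ has two nontrivial characters (factoring through the quotient $\mumu_3$). Your count of invariant quartic forms is correct for the trivial character, but the spaces of semi-invariant quartics for each nontrivial character are also $2$-dimensional (e.g.\ spanned by $u_0^4+\omega^2u_1^4+\omega u_2^4$ and $u_0^2u_1^2+\omega^2u_1^2u_2^2+\omega u_2^2u_0^2$ for one of them), so the $G$-invariant quartic \emph{curves} form three pencils, not one. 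The same slip already appears in your assertion that $\mathfrak{h}$ is the unique $G$-invariant conic: there are three, namely $u_0^2+\omega^ku_1^2+\omega^{2k}u_2^2=0$ for $k=0,1,2$ (the paper uses all three as $\mathcal{C}_1,\mathcal{C}_2,\mathcal{C}_3$). The gap is not hypothetical: the defining quartic of $\Delta_3$ works out to $4(u_0^4+\omega^2u_1^4+\omega u_2^4)-(\omega u_0^2u_1^2+u_1^2u_2^2+\omega^2u_0^2u_2^2)$, an eigenvector with eigenvalue $\omega$ for the cyclic shift, so $\Delta_3$ and $\Delta_4$ do \emph{not} lie in your pencil, and avoiding the three points $[1{:}0{:}0]$, $[1{:}1{:}0]$, $[1{:}1{:}1]$ says nothing about their smoothness. (One can see the nontrivial character a priori: for $i=3$ the residual conic over $[u_0{:}u_1{:}u_2]$ is $Q_1$ restricted to a plane $\{M=0\}$ whose linear form $M$ is itself a semi-invariant with nontrivial character, and the tangency condition defining the discriminant acquires the square of that character.)

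Even for $i=1$, where the discriminant does happen to be an honest invariant (it is $4\sum_ju_j^4-\sum_{j<k}u_j^2u_k^2$, i.e.\ the member $t=-9$ of your pencil, hence smooth by your singular-member analysis, which is itself correct), your argument as written is incomplete: you never determine which of the three pencils $\Delta_1$ belongs to, and your three fibre computations cannot decide this. The approach is repairable --- compute the character of the discriminant form for each $i$, and redo the singular-member analysis for the other two pencils (they are diagonally projectively equivalent to the $\chi_0$-pencil, but their singular members are singular along different orbits, so different fibres must be checked) --- but as it stands the proof fails for $\Delta_3$ and $\Delta_4$. The paper sidesteps all of this by writing down the two explicit quartics and verifying smoothness directly via the Jacobian criterion.
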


\begin{proof}
If $i=1$, then the~linear projection $V_4\dasharrow\mathbb{P}^2$ from the~plane $\Pi_1$ is given by
$$
\big[x_0:x_1:x_2:x_3:x_4:x_5\big]\mapsto \big[x_0:x_1:x_2\big].
$$
Using this, one can deduce that $\Delta_1$ is given by
$4x_0^4-x_0^2x_1^2-x_0^2x_2^2+4x_1^4-x_1^2x_2^2+4x_2^4=0$.
This curve is smooth. Thus, the~curve $\Delta_2\cong\Delta_1$ is also smooth.

Let $y_0=x_0-\omega x_3$, $y_1=x_1-\omega x_4$, $y_2=x_2-\omega x_5$, $y_3=x_3$, $y_4=x_4$, $y_5=x_5$.
In new coordinates, the~linear projection $V_4\dasharrow\mathbb{P}^2$ from the~plane $\Pi_3$ is given by
$$
\big[y_0:y_1:y_2:y_3:y_4:y_5\big]\mapsto \big[y_0:y_1:y_2\big].
$$
Then $\Delta_3$ is given by
$4x_0^4-\omega x_0^2x_1^2+(\omega+1)x_2^2x_0^2-4(\omega+1)x_1^4-x_1^2x_2^2+4\omega x_2^4$.
This curve is smooth, so that $\Delta_4\cong\Delta_3$ is also smooth.
\end{proof}

Observe that $\mathbb{P}^2=\mathbb{P}(\mathbb{U}_3)$ has three $G$-invariant conics.
Denote them by $\mathcal{C}_1$, $\mathcal{C}_2$ and $\mathcal{C}_3$,
and denote by $F_{1,i}$, $F_{2,i}$ and $F_{3,i}$ their preimages on $X_i$ via $\eta_i$, respectively.
Then
$$
F_{1,i}\sim F_{2,i}\sim F_{3,i}\sim \pi_i^*(2H)-2E_i.
$$
For every $i\in\{1,2,3,4\}$ and $j\in\{1,2,3\}$, let $\overline{F}_{j,i}=\pi_i(F_{j,i}))$.
Then $\overline{F}_{j,i}$ is an~irreducible surface in $|2H|$ that is singular along the~conic $C_i$.
Without loss of generality, we may assume that  $\overline{F}_{1,1}$ is cut out on $V_4$ by the~equation $f_{1,1}=0$ for
$f_{1,1}=x_0^2+x_1^2+x_3^2$,
and the~surface $\overline{F}_{2,1}$ is cut out on $V_4$ by the~equation $f_{2,1}=0$ for
$f_{2,1}=x_0^2+\omega x_1^2+\omega^2 x_3^2$.
Then the~surface $\overline{F}_{3,1}$ is cut out on $V_4$ by the~equation $f_{3,1}=0$, where $f_{3,1}=x_0^2+\omega^2 x_1^2+\omega x_3^2$.
Using the~involution $\tau$, we also see that $\overline{F}_{1,2}=\tau(\overline{F}_{1,1})$, $\overline{F}_{2,2}=\tau(\overline{F}_{2,1})$ and $\overline{F}_{3,2}=\tau(\overline{F}_{3,1})$,
so that we let $f_{1,2}=\tau^*(f_{1,1})$, $f_{2,2}=\tau^*(f_{2,1})$ and $f_{3,2}=\tau^*(f_{3,1})$.
Then $\overline{F}_{1,3}$ is cut out by $f_{1,3}=0$, where
$f_{1,3}=(x_0-\omega x_3)^2+(x_1-\omega x_4)^2+(x_2-\omega x_5)^2$.
Likewise, the~surface $\overline{F}_{2,3}$ is cut out on $V_4$ by the~equation $f_{2,3}=0$, where
$f_{2,3}=(x_0-\omega x_3)^2+\omega(x_1-\omega x_4)^2+\omega^2(x_2-\omega x_5)^2$,
Similarly, $\overline{F}_{3,3}$ is cut out by $f_{3,3}=0$, where
$f_{3,3}=(x_0-\omega x_3)^2+\omega^2(x_1-\omega x_4)^2+\omega(x_2-\omega x_5)^2$.
Finally, we conclude that $\overline{F}_{1,4}=\tau(\overline{F}_{1,3})$, $\overline{F}_{2,4}=\tau(\overline{F}_{2,3})$ and $\overline{F}_{3,4}=\tau(\overline{F}_{3,3})$,
so that we let $f_{1,4}=\tau^*(f_{1,3})$, $f_{2,4}=\tau^*(f_{2,3})$ and $f_{3,4}=\tau^*(f_{3,3})$.

\begin{remark}
\label{remark:2-16-surfaces-curves}
The incidence relation between the~surfaces
$\overline{F}_{1,1}$, $\overline{F}_{2,1}$, $\overline{F}_{3,1}$,
$\overline{F}_{1,2}$, $\overline{F}_{2,2}$, $\overline{F}_{3,2}$,
$\overline{F}_{1,3}$, $\overline{F}_{2,3}$, $\overline{F}_{3,3}$,
$\overline{F}_{1,4}$, $\overline{F}_{2,4}$, $\overline{F}_{3,4}$ and the~conics $C_1$, $C_2$, $C_3$, $C_4$
is described in the~following table:
\begin{center}\renewcommand{\arraystretch}{1.5}
\begin{tabular}{|c||c|c|c|c|c|c|c|c|c|c|c|c|}
  \hline
&$\overline{F}_{1,1}$&$\overline{F}_{2,1}$&$\overline{F}_{3,1}$&$\overline{F}_{1,2}$&$\overline{F}_{2,2}$&$\overline{F}_{3,2}$&$\overline{F}_{1,3}$&$\overline{F}_{2,3}$&$\overline{F}_{3,3}$&$\overline{F}_{1,4}$&$\overline{F}_{2,4}$&$\overline{F}_{3,4}$ \\
  \hline
  \hline
$C_1$ & $\mathrm{Node}$  & $\mathrm{Node}$   & $\mathrm{Cusp}$  & $\mathrm{No}$  & $\mathrm{Yes}$  & $\mathrm{No}$  & $\mathrm{No}$  & $\mathrm{Yes}$  & $\mathrm{No}$  & $\mathrm{No}$  & $\mathrm{Yes}$  & $\mathrm{No}$ \\
  \hline
$C_2$ & $\mathrm{No}$  & $\mathrm{Yes}$  & $\mathrm{No}$  & $\mathrm{Node}$   & $\mathrm{Node}$   & $\mathrm{Cusp}$  & $\mathrm{No}$  & $\mathrm{Yes}$  & $\mathrm{No}$  & $\mathrm{No}$  & $\mathrm{Yes}$  & $\mathrm{No}$   \\
  \hline
$C_3$ & $\mathrm{Yes}$  & $\mathrm{No}$  & $\mathrm{No}$  & $\mathrm{Yes}$  & $\mathrm{No}$  & $\mathrm{No}$  & $\mathrm{Node}$    & $\mathrm{Node}$   & $\mathrm{Cusp}$  & $\mathrm{Yes}$  & $\mathrm{No}$  & $\mathrm{No}$  \\
  \hline
$C_4$ & $\mathrm{Yes}$  & $\mathrm{No}$  & $\mathrm{No}$  & $\mathrm{Yes}$  & $\mathrm{No}$  & $\mathrm{No}$  & $\mathrm{Yes}$  & $\mathrm{No}$  & $\mathrm{No}$  & $\mathrm{Node}$    & $\mathrm{Node}$   & $\mathrm{Cusp}$ \\
  \hline
\end{tabular}
\end{center}
\medskip
\noindent
Here, $\mathrm{No}$ means that the~surface does not contains the~conic,
and in all other cases the~surface contains the~conic.
Likewise, $\mathrm{Node}$ means the~the surface has an~ordinary double point in general point of the~conic,
and $\mathrm{Cusp}$ means that  the~surface has an~ordinary cusp in general point of the~conic.
In all remaining cases the~surface is smooth at general point of the~conic (we will see later that it is smooth along this conic).
\end{remark}

\begin{corollary}
\label{corollary:2-16-upper-bound}
For every $i\in\{1,2,3,4\}$, one has $\alpha_G(X_i)\leqslant\frac{3}{4}$.
\end{corollary}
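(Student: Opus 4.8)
The plan is to exhibit an~explicit $G$-invariant effective $\mathbb{Q}$-divisor $D_i\sim_{\mathbb{Q}}-K_{X_i}$ for which $(X_i,cD_i)$ fails to be log canonical for every $c>\frac{3}{4}$. Since $G\cong\mathfrak{A}_4$ is finite, the~second description of $\alpha_G$ recalled in the~introduction then gives $\alpha_G(X_i)\leqslant\frac{3}{4}$ at once. Concretely, I would take
$$
D_i=F_{3,i}+E_i.
$$
It lies in $|-K_{X_i}|$, since $F_{3,i}\sim\pi_i^{*}(2H)-2E_i$ while $-K_{X_i}\sim\pi_i^{*}(2H)-E_i$; and it is $G$-invariant, because $E_i$ is $G$-invariant by construction and $F_{3,i}=\eta_i^{*}(\mathcal{C}_3)$ is $G$-invariant, the~conic $\mathcal{C}_3\subset\mathbb{P}^2=\mathbb{P}(\mathbb{U}_3)$ being $G$-invariant and $\eta_i$ being $G$-equivariant. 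As $X_1\cong X_2$ and $X_3\cong X_4$ it is enough to treat $i\in\{1,3\}$, but the~computation is uniform in~$i$.

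The~heart of the~argument is a~local computation over a~general point of $C_i$. By Remark~\ref{remark:2-16-surfaces-curves} the~surface $\overline{F}_{3,i}$ has an~ordinary cusp at a~general point of $C_i$, so there are local analytic coordinates $(u,v,w)$ on $V_4$ near such a~point with $C_i=\{u=v=0\}$ and $\overline{F}_{3,i}=\{u^{2}=v^{3}\}$; in particular $\mathrm{mult}_{C_i}\overline{F}_{3,i}=2$. Blowing up $C_i$ and passing to the~chart of $X_i$ with coordinates $(t,v,w)$ in which $u=tv$, we get $E_i=\{v=0\}$ and, dividing $u^{2}-v^{3}=v^{2}(t^{2}-v)$ by $v^{2}$, the~proper transform $F_{3,i}=\{v=t^{2}\}$. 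Thus $F_{3,i}$ is smooth at a~general point of the~curve $\Gamma_i=(F_{3,i}\cap E_i)_{\mathrm{red}}=\{t=v=0\}$, and is tangent there to $E_i$ to order $2$.

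To conclude, I would take the~$(1,2)$-weighted blow up $\vartheta\colon W\to X_i$ of the~curve $\Gamma_i$ (in the~chart above) with weight $1$ on $t$ and weight $2$ on $v$, and let $F$ be its exceptional surface. Then $A_{X_i}(F)=1+2=3$, whereas
$$
\mathrm{ord}_F(D_i)=\mathrm{ord}_F\big(v-t^{2}\big)+\mathrm{ord}_F(v)=\min\{2,2\}+2=4,
$$
so $A_{X_i}(F)-c\cdot\mathrm{ord}_F(D_i)=3-4c<0$ for every $c>\frac{3}{4}$. Hence $(X_i,cD_i)$ is not log canonical for $c>\frac{3}{4}$, and therefore $\alpha_G(X_i)\leqslant\frac{3}{4}$, as required.

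The~only step that is not routine bookkeeping is the~local analysis of the~cusp: one must use that the~cusp of $\overline{F}_{3,i}$ along $C_i$ is \emph{ordinary} — a~transverse slice being exactly $u^{2}=v^{3}$ — to conclude that $F_{3,i}$ is smooth near $\Gamma_i$ and tangent to $E_i$ to order precisely $2$, which is what singles out the~$(1,2)$-weighted blow up and produces the~value $\frac{3}{4}$. The~remaining ingredients (the~linear equivalences, the~$G$-invariance of $D_i$, and the~log discrepancy and order computations for the~weighted blow up) are entirely standard, and note that we do not need the~matching bound $\mathrm{lct}(X_i,D_i)\geqslant\frac{3}{4}$.
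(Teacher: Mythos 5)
Your proposal is correct and follows essentially the same route as the paper: both use the anticanonical divisor $F_{3,i}+E_i$ and the fact (from Remark~\ref{remark:2-16-surfaces-curves}) that $F_{3,i}$ is tangent to $E_i$ to order two along a section of $E_i\to C_i$ to get $\mathrm{lct}(X_i,F_{3,i}+E_i)\leqslant\frac{3}{4}$. The only difference is that you make explicit, via the $(1,2)$-weighted blow up, the standard local computation that the paper leaves implicit; your local analysis of the cusp and the resulting log discrepancy and order computations are accurate.
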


\begin{proof}
Observe that $F_{3,i}+E_i\sim -K_{X_i}$.
Moreover, it follows from Remark~\ref{remark:2-16-surfaces-curves} that
the surface $F_{3,i}$ is tangent to $E_i$ along a~section of the~projection $E_i\to C_i$.
Thus, we conclude that $\alpha_G(X_i)\leqslant\mathrm{lct}(X_i,F_{3,i}+E_i)\leqslant\frac{3}{4}$
as required.
\end{proof}

Recall that the~group $G\cong\mumu_2^2\rtimes\mumu_3$ has three different one-dimensional representations:
the trivial representation with the~character $\chi_0$,
the non-trivial representation with the~character $\chi_1$ that sends
the generator of $\mumu_3$ to $\omega$, and the~non-trivial representation with
the character $\chi_2$ that sends the~generator of $\mumu_3$ to $\omega^2$.
On the~other hand, the~polynomials $f$, $g$, $f_{1,1}$, $f_{2,1}$, $f_{3,1}$,
$f_{1,2}$, $f_{2,2}$, $f_{3,2}$, $f_{1,3}$, $f_{2,3}$, $f_{3,3}$, $f_{1,4}$, $f_{2,4}$, $f_{3,4}$
are semi-invariants of the~group $G$ considered as a~subgroup in $\mathrm{SL}_6(\mathbb{C})$.
These polynomials split into three groups with respect to the~characters $\chi_0$, $\chi_1$ and $\chi_2$ as follows:
\begin{enumerate}
\item[($\chi_0$)] $f$, $f_{1,1}$, $f_{1,2}$, $f_{1,3}$, $f_{1,4}$ are $G$-invariants;
\item[($\chi_1$)] $f_{3,1}$, $f_{3,2}$, $f_{3,3}$, $f_{3,4}$ are $G$-semi-invariants with character $\chi_1$;
\item[($\chi_2$)] $g$, $f_{2,1}$, $f_{2,2}$, $f_{2,3}$, $f_{2,4}$ are $G$-semi-invariants with character $\chi_2$.
\end{enumerate}
Note that  $f_{1,4}=-(\omega+2)f_{1,1}+(\omega+2)f_{1,2}+f_{1,3}$ and
$(\omega+1)f_{1,1}-\omega f_{1,2}-(\omega+1)f_{1,3}+2f=0$,
which implies that $\overline{F}_{1,1}$, $\overline{F}_{1,2}$, $\overline{F}_{1,3}$, $\overline{F}_{1,4}$ generate a~pencil on $V_4$,
which we denote by $\mathcal{P}_0$.
Similarly, we have
$f_{3,4}=-(\omega+2)f_{3,1}+(\omega+2)f_{3,2}+f_{3,3}$,
and the~surfaces $\overline{F}_{3,1}$, $\overline{F}_{3,2}$, $\overline{F}_{3,3}$, $\overline{F}_{3,4}$
generate two-dimensional linear system (net), which we denote by $\mathcal{M}_1$.
This linear system $\mathcal{M}_1$ contains four pencils, which we denote by $\mathcal{P}_{1,1}$, $\mathcal{P}_{1,2}$, $\mathcal{P}_{1,3}$ and $\mathcal{P}_{1,4}$,
that consist of surfaces containing the~conics $C_1$, $C_2$, $C_3$ and $C_4$, respectively.
Likewise, we have $f_{2,4}=-(\omega+2)f_{2,1}+(\omega+2)f_{2,2}+f_{2,3}$
and
$(\omega-1)f_{2,1}-(\omega+2)f_{2,2}-(\omega+1)f_{2,3}+2g=0$,
so that $\overline{F}_{2,1}$, $\overline{F}_{2,2}$, $\overline{F}_{2,3}$, $\overline{F}_{2,4}$ generates a~pencil on $V_4$,
which we denote by $\mathcal{P}_2$.

For every $i\in\{1,2,3,4\}$, denote by $\mathcal{P}_0^i$,
$\mathcal{P}_{1,1}^i$, $\mathcal{P}_{1,2}^i$, $\mathcal{P}_{1,3}^i$, $\mathcal{P}_{1,4}^i$ and  $\mathcal{P}_2^i$
the strict transforms on $X_i$ of the~pencils $\mathcal{P}_0$,
$\mathcal{P}_{1,1}$, $\mathcal{P}_{1,2}$, $\mathcal{P}_{1,3}$, $\mathcal{P}_{1,4}$ and  $\mathcal{P}_2$.
Then
\begin{align*}
\mathcal{P}_{1,1}^1&\sim \mathcal{P}_{2}^1\sim -K_{X_1},\\
\mathcal{P}_{1,2}^2&\sim \mathcal{P}_{2}^2\sim -K_{X_2},\\
\mathcal{P}_{1,3}^3&\sim \mathcal{P}_{0}^3\sim -K_{X_3}, \\
\mathcal{P}_{1,4}^4&\sim \mathcal{P}_{0}^4\sim -K_{X_4}.
\end{align*}
Moreover, we have $F_{3,1}+E_1\in\mathcal{P}_{1,1}^1$, $F_{2,1}+E_1\in\mathcal{P}_{2}^1$ $F_{3,2}+E_2\in\mathcal{P}_{1,2}^2$, $F_{2,2}+E_2\in\mathcal{P}_{2}^2$,
$F_{3,3}+E_3\in\mathcal{P}_{1,3}^3$, $F_{1,3}+E_3\in\mathcal{P}_{0}^3$, $F_{3,4}+E_4\in\mathcal{P}_{1,4}^4$, $F_{1,4}+E_3\in\mathcal{P}_{0}^4$.
Thus, we see that the~restrictions
$\mathcal{P}_{1,1}^1\vert_{X_1}$, $\mathcal{P}_{2}^1\vert_{X_1}$, $\mathcal{P}_{1,2}^2\vert_{X_2}$,
$\mathcal{P}_{2}^2\vert_{X_2}$, $\mathcal{P}_{1,3}^3\vert_{X_3}$, $\mathcal{P}_{0}^3\vert_{X_3}$,
$\mathcal{P}_{1,4}^4\vert_{X_4}$, $\mathcal{P}_{0}^4\vert_{X_4}$
are $G$-invariant curves in $E_1$, $E_2$, $E_3$, $E_4$, respectively.
Denote them by $Z_1$, $Z_1^\prime$,  $Z_2$, $Z_2^\prime$,  $Z_3$, $Z_3^\prime$,  $Z_4$, $Z_4^\prime$, respectively.
Observe that $Z_1\ne Z_1^\prime$,  $Z_2\ne Z_2^\prime$,  $Z_3\ne Z_3^\prime$ and $Z_4\ne Z_4^\prime$.
This follows from the~exact sequence of $G$-representations
$$
0\rightarrow H^0\Big(\mathcal{O}_{X_i}\big(-K_{X_i}-E_i\big)\Big)\rightarrow H^0\Big(\mathcal{O}_{X_i}\big(-K_{X_i}\big)\Big)
\twoheadrightarrow H^0\Big(\mathcal{O}_{E_i}\big(-K_{X_i}\big\vert_{E_i}\big)\Big),
$$
where the~surjectivity of the~last map follows from Kodaira vanishing.
Alternatively, one can show this using the~explicit equations of the~pencils $\mathcal{P}_0$,
$\mathcal{P}_{1,1}$, $\mathcal{P}_{1,2}$, $\mathcal{P}_{1,3}$, $\mathcal{P}_{1,4}$ and  $\mathcal{P}_2$.

Recall that $E_1\cong E_2\cong E_3\cong E_4\cong\mathbb{P}^1\times\mathbb{P}^1$ by Lemma~\ref{lemma:E1-E2-E3-E4}.
For every $i\in\{1,2,3,4\}$, let $s_{E_i}$ be a~section of the~projection $E_i\to C_i$ such that $s_{E_i}^2=0$,
and let $f_{E_i}$ be  a~fiber of this projection.
Then $-E_i\vert_{E_i}=s_{E_i}-f_{E_i}$,
so that $-K_{X_i}\sim s_{E_i}+3f_{E_i}$. Hence, we see that $Z_i\sim Z_i^\prime \sim s_{E_i}+3f_{E_i}$,
which immediately implies that both curve $Z_i$ and $Z_i^\prime$ are irreducible,
because $C_i$ does not have $G$-orbits of lengths $1$, $2$ and $3$.

For each $i\in\{1,2,3,4\}$, the~conic bundle $\eta_i$ gives a~double cover $E_i\to\mathbb{P}^2$,
whose branching curve is $\mathcal{C}_3$.
Indeed, one has $F_{3,i}\sim\pi_i^*(2H)-2E_i$,
and $\overline{F}_{3,i}$ has a~cusp at general point of the~conic $C_i$.
Since $F_{3,i}\vert_{E_i}\sim 2s_{E_i}+2f_{E_i}$, we have $\overline{F}_{3,i}\vert_{E_i}=2C_i^i$
for some irreducible curve $C_i^i\in|s_{E_i}+f_{E_i}|$.
Since the~double cover $E_i\to\mathbb{P}^2$ is given by a~linear subsystem in $|s_{E_i}+f_{E_i}|$,
we conclude that $\eta_i(C_i^i)$ is the~branching curve of this double cover.
But $\eta_i(C_i^i)=\mathcal{C}_3$, since $F_{3,i}$ is the~preimage of the~curve $\mathcal{C}_3$ via $\eta_i$.

For every $i$ and $j$ in $\{1,2,3,4\}$ such that $j\ne i$, denote by
$C_j^i$ the~strict transform of the~conic $C_j$ on the~threefold $X_i$. Then
$-K_{X_i}\cdot C_1^i=-K_{X_i}\cdot C_2^i=-K_{X_i}\cdot C_3^i=-K_{X_i}\cdot C_4^i=4$
and $-K_{X_i}\cdot Z_i=-K_{X_i}\cdot Z_i^\prime=6$.
Observe also that $C_1^i$, $C_2^i$, $C_3^i$, $C_4^i$, $Z_i$, $Z_i^\prime$ are smooth rational curves.
Moreover, we have the~following result:

\begin{lemma}
\label{lemma:curves-small-degree}
Let $C$ be an~irreducible $G$-invariant curve in $X_i$ such that $C\cong\mathbb{P}^1$ and $-K_{X_i}\cdot C<8$.
Then $C$ is one of the~curves $C_1^i$, $C_2^i$, $C_3^i$, $C_4^i$, $Z_i$, $Z_i^\prime$.
\end{lemma}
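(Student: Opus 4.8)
The plan is to classify $C$ by tracking its images under the two natural contractions of $X_i$ --- the blow-down $\pi_i\colon X_i\to V_4\subset\mathbb{P}^5$ and the conic bundle $\eta_i\colon X_i\to\mathbb{P}^2=\mathbb{P}(\mathbb{U}_3)$ --- using that $-K_{X_i}\sim 2\pi_i^*(H)-E_i\sim\pi_i^*(H)+\big(\pi_i^*(H)-E_i\big)$ with $\pi_i^*(H)-E_i=\eta_i^*\big(\mathcal{O}_{\mathbb{P}^2}(1)\big)$. Hence $-K_{X_i}\cdot C=\pi_i^*(H)\cdot C+\eta_i^*\big(\mathcal{O}_{\mathbb{P}^2}(1)\big)\cdot C$ is a sum of two non-negative integers, each therefore strictly less than $8$. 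Since $G$ has no fixed point in $\mathbb{P}(\mathbb{U}_3)$ and no invariant line there, $\eta_i(C)$ cannot be a point and is a $G$-invariant curve of degree $\geqslant 2$, so $\eta_i^*\big(\mathcal{O}_{\mathbb{P}^2}(1)\big)\cdot C\geqslant 2$. I would then treat separately the cases $C\subseteq E_i$ and $C\not\subseteq E_i$.

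Suppose first $C\subseteq E_i$. Using $E_i\cong\mathbb{P}^1\times\mathbb{P}^1$ from Lemma~\ref{lemma:E1-E2-E3-E4} and $-K_{X_i}\vert_{E_i}\sim s_{E_i}+3f_{E_i}$, write $C\sim a\,s_{E_i}+b\,f_{E_i}$ with $a,b\geqslant 0$, so that $3a+b<8$. A curve in class $f_{E_i}$ is a fibre of $E_i\to C_i$, hence $G$-fixed, contradicting the absence of $G$-orbits of length $\leqslant 3$ in $C_i$; so $a\geqslant 1$, and irreducibility forces $b\geqslant 1$ once $a\geqslant 2$. For each of the finitely many remaining classes I would read off the $G$-invariant members of $|a\,s_{E_i}+b\,f_{E_i}|$ from the decomposition of $H^0\big(E_i,\mathcal{O}(a\,s_{E_i}+b\,f_{E_i})\big)$ as a representation of a central extension of $G$; for instance $H^0(\mathcal{O}(s_{E_i}))$ and $H^0(\mathcal{O}(f_{E_i}))$ are irreducible two-dimensional and contain no invariant line, whereas $|s_{E_i}+3f_{E_i}|=|-K_{X_i}\vert_{E_i}|$ contains exactly the two $G$-invariant curves $Z_i$ and $Z_i^\prime$ cut out by the two relevant pencils restricted to $E_i$. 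This case should yield $Z_i$ and $Z_i^\prime$ (one should also track the class $s_{E_i}+f_{E_i}$, whose unique $G$-invariant member is $C_i^i$, and check that all other classes with $3a+b<8$ admit no $G$-invariant curve).

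Now suppose $C\not\subseteq E_i$, so $\Gamma:=\pi_i(C)$ is a $G$-invariant irreducible curve in $V_4$ and $\pi_i\vert_C\colon C\to\Gamma$ is finite of some degree $e\geqslant 1$. Then $\pi_i^*(H)\cdot C=e\deg\Gamma$, and since $\Pi_i\cap V_4=C_i$ we have $E_i\cdot C=e\,m$ where $m$ is the length of the scheme $\Gamma\cap\Pi_i=\Gamma\cap C_i$, so $-K_{X_i}\cdot C=e(2\deg\Gamma-m)$. The key point is the classification of $G$-invariant linear subspaces of $\mathbb{P}^5=\mathbb{P}(\mathbb{U}_3\oplus\mathbb{U}_3)$: since $\mathbb{U}_3\oplus\mathbb{U}_3$ has no subrepresentation of dimension $1$, $2$, $4$ or $5$, the span $\langle\Gamma\rangle$ is either one of the planes $\Pi_{[\lambda:\mu]}$ or all of $\mathbb{P}^5$. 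If $\langle\Gamma\rangle$ is a plane then $\Gamma\subseteq\Pi_{[\lambda:\mu]}\cap V_4$; as in the description of the four $G$-invariant conics, $\Pi_{[\lambda:\mu]}\cap V_4$ contains an irreducible curve of degree $\geqslant 2$ only for $\Pi_1,\dots,\Pi_4$, in which case it is the conic $C_j$, so $\deg\Gamma=2$, $\Gamma=C_j$ with $j\neq i$, $m=0$, and $4e=-K_{X_i}\cdot C<8$ forces $e=1$, $C=C_j^i$. If $\Gamma$ is non-degenerate then $\deg\Gamma\geqslant 5$, and projecting $V_4$ from $\Pi_i$ one sees $m\leqslant\deg\Gamma-2$ (the image of $\Gamma$ is a non-degenerate plane curve, hence of degree $\geqslant 2$); moreover $\Gamma\cap C_i$ is a $G$-invariant subset of $C_i\cong\mathbb{P}^1$, so $m=0$ or $m\geqslant 4$. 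If $m=0$ then $-K_{X_i}\cdot C=2e\deg\Gamma\geqslant 10$; if $m\geqslant 4$ then $m\leqslant\deg\Gamma-2$ gives $\deg\Gamma\geqslant 6$ and $-K_{X_i}\cdot C=e(2\deg\Gamma-m)\geqslant e(\deg\Gamma+2)\geqslant 8$. In all cases $-K_{X_i}\cdot C\geqslant 8$, so the non-degenerate case is impossible, and the proof is complete.

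The step I expect to be the main obstacle is the non-degenerate sub-case: establishing $m\leqslant\deg\Gamma-2$ cleanly (via the projection from $\Pi_i$, using non-degeneracy to see the image spans $\mathbb{P}^2$) and combining it correctly with the orbit-length constraint $m\in\{0\}\cup\{4,5,6,\dots\}$ on $C_i\cong\mathbb{P}^1$. The analysis inside $E_i$ is more routine given the explicit $\mathbb{P}^1\times\mathbb{P}^1$ structure, but still requires an honest enumeration of which classes $a\,s_{E_i}+b\,f_{E_i}$ with $3a+b<8$ carry a $G$-invariant curve, and of how $C_i^i$ fits the list.
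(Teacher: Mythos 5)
Your proof is correct and arrives at the same list, but the route differs from the paper's, most substantially in the case $C\not\subseteq E_i$. There the paper shows $\pi_i^*(H)\cdot C\geqslant 3$ (after discarding the conics $C_j$), $(\pi_i^*(H)-E_i)\cdot C\geqslant 2$ and $E_i\cdot C\in\{0\}\cup\mathbb{Z}_{\geqslant 4}$, which forces $\pi_i^*(H)\cdot C=(\pi_i^*(H)-E_i)\cdot C=3$ and $E_i\cdot C=0$, and then derives the contradiction from the fact that $\eta_i(C)$ would be a rational plane cubic whose unique singular point is $G$-fixed. You instead classify the $G$-invariant linear subspaces of $\mathbb{P}(\mathbb{U}_3\oplus\mathbb{U}_3)$ (only the planes $\Pi_{[\lambda:\mu]}$ and $\mathbb{P}^5$ itself), reduce the degenerate case to the four conics, and in the non-degenerate case combine $\deg\Gamma\geqslant 5$ with the projection bound and the orbit constraint; this replaces the ``singular rational cubic'' trick by the degree bound for non-degenerate curves, and both mechanisms work. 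Two small adjustments to your write-up: $e=1$ automatically because $\pi_i$ is birational and $C\not\subseteq E_i$, and the quantity entering your inequalities should be $E_i\cdot C$ rather than $e\cdot\operatorname{length}(\Gamma\cap\Pi_i)$ (these can differ at non-transverse intersections), but both of your bounds --- the dichotomy $0$ or $\geqslant 4$ coming from orbit lengths, and the upper bound $\deg\Gamma-2$ coming from $\deg\eta_i(C)\geqslant 2$ --- hold verbatim for $E_i\cdot C$, so nothing breaks. For $C\subseteq E_i$ the paper is slicker: it pins down $(a,b)=(1,3)$ at once from $C\cdot C_i^i=a+b$ and the absence of $G$-orbits of length $1,2,3,5$ on $C_i^i\cong\mathbb{P}^1$, leaving a single representation to decompose; your enumeration must additionally dispose of the classes $(1,0)$, $(1,2)$, $(1,4)$ and $(2,1)$, but all of these have $a+b$ odd, so the centre of $2.G\cong\mathrm{SL}_2(\mathbb{F}_3)$ acts by $-1$ on $H^0\big(\mathcal{O}_{E_i}(as_{E_i}+bf_{E_i})\big)$ and there are no one-dimensional subrepresentations; hence your enumeration closes quickly and the two approaches are of comparable length there.
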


\begin{proof}
The proof is  the~same for every $i\in\{1,2,3,4\}$.
Thus, for simplicity of notations, we assume that $i=1$.
Suppose that $C$ is not one of the~curves $C_1^1$, $C_2^1$, $C_3^1$, $C_4^1$, $Z_1$, $Z_1^\prime$.
Let us seek for a~contradiction.

First, we suppose that $C\subset E_1$. Then $C\sim as_{E_1}+bf_{E_1}$ for some non-negative integers $a$ and $b$.
Since $-K_{X_1}\vert_{E_1}\sim s_{E_1}+3f_{E_1}$, we see that $3a+b=-K_{X_i}\cdot C<8$.
Moreover, since $C_1^1\cdot C=a+b$, we conclude that $a+b\geqslant 4$ and $a+b\ne 5$,
because $C_1^1$ does not have $G$-orbits of lengths $1$, $2$, $3$ and $5$.
Thus, since $C$ is irreducible, we conclude that $a=1$ and $b=3$.

Let us describe the~action of $G$ on the~surface $E_1\cong\mathbb{P}^1\times\mathbb{P}^1$.
Since $G$ acts faithfully on $C_1\cong\mathbb{P}^1$, this action is given
by the~unique (unimodular) irreducible two-dimensional representation of the~central extension
$2.G\cong\mathrm{SL}_2(\mathbb{F}_3)$ of the~group $G$, which we denote by $\mathbb{W}_3$.
Since $|s_{E_1}+f_{E_1}|$ contains a~$G$-invariant curve, and the~projection $E_1\to C_1$ is $G$-equivariant, and
we deduce that the~action of $G$ on the~surface $E_1$ is given by
the identification $E_1=\mathbb{P}(\mathbb{W}_2)\times\mathbb{P}(\mathbb{W}_2)$.
Thus, the~$G$-invariant curves in  $|s_{E_1}+3f_{E_1}|$
corresponds to one-dimensional subrepresentations of the~group $2.G$ in
$\mathbb{W}_2\otimes\mathrm{Sym}^3(\mathbb{W}_2)$.
Using the~following GAP script, we conclude that there are two such subrepresentations:
\begin{center}
\begin{verbatim}
G:=Group("SL(2,3)");
R:=IrreducibleModules(G,CyclotomicField(3));
M:=TensorProduct(R[4],SymmetricPower(R[4],3));
IndecomposableSummands(M);
\end{verbatim}
\end{center}
These subrepresentations corresponds to the~curves $Z_1$ and $Z_1^\prime$, so that $C$ must be one of them,
which is impossible by assumption.

Thus, we see that $C$ is not contained in $E_1$.
Let $\overline{C}=\pi_1(C)$. Then $\pi_1^*(H)\cdot C=H\cdot\overline{C}\geqslant 2$.
Moreover, if $H\cdot\overline{C}=2$, then $\overline{C}$ is one of the~conics $C_1$, $C_2$, $C_3$ or $C_4$,
because these are the~only $G$-invariant conics in $V_4$.
Since $C\not\subset E_1$ and $C$  is not one of the~curves $C_2^1$, $C_3^1$, $C_4^1$,
we see that $H\cdot\overline{C}\ne 2$, so that $\pi_1^*(H)\cdot C\geqslant 3$.

Note also that $\eta_1(C)$ is a~curve, because $G$ does not have fixed points in $\mathbb{P}^2$.
Similarly, we see that $\eta_1(C)$ is not a~line. Hence, we conclude that
$(\pi_1^*(H)-E_1)\cdot C\geqslant\mathrm{deg}(\eta_1(C))\geqslant 2$.
One the~other hand, we have $E_1\cdot C$ must be even since $C$ does not have $G$-orbits of odd length.
Moreover, we have
$$
7\geqslant-K_{X_1}\cdot C=\big(\pi_1^*(2H)-E_1\big)\cdot C=\pi_1^*(H)\cdot C+\big(\pi_1^*(H)-E_1\big)\cdot C\geqslant 5,
$$
so that $-K_{X_1}\cdot C=6$, $\pi_1^*(H)\cdot C=3$ and $(\pi_1^*(H)-E_1)\cdot C=3$, which gives $E_1\cdot C=0$.
Hence, we see that $\overline{C}$ is a~smooth rational cubic curve,
and $\eta_1(C)$ is a~singular cubic curve.
This is impossible, since $G$ does not have fixed points in $\mathbb{P}^2$.
\end{proof}

\begin{lemma}
\label{lemma:2-16-surfaces}
Let $S$ be a $G$-invariant surface such that
$-K_{X_{i}}\sim_{\mathbb{Q}}aS+\Delta$ for a~rational number $a$ and an~effective $G$-invariant $\mathbb{Q}$-divisor $\Delta$ on $X_i$.
Then $a\leqslant 1$.
\end{lemma}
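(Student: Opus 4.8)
The plan is to reduce the whole statement to a computation in the two‑dimensional Néron–Severi space of $X_i$. First I would record that $\mathrm{Pic}(X_i)=\mathbb{Z}\pi_i^*(H)\oplus\mathbb{Z}E_i$ — this follows from $\mathrm{Pic}(V_4)=\mathbb{Z}H$ (Lefschetz, since $V_4$ is a smooth complete intersection threefold in $\mathbb{P}^5$) together with the blow‑up formula — and that numerical and linear equivalence coincide on $X_i$, which is a smooth rationally connected threefold with torsion‑free Picard group. Recall also $-K_{X_i}\sim\pi_i^*(2H)-E_i$ (indeed $F_{3,i}+E_i\sim-K_{X_i}$ while $F_{3,i}\sim\pi_i^*(2H)-2E_i$). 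The crucial preliminary step is to describe the pseudoeffective cone $\overline{\mathrm{Eff}}(X_i)\subset\mathrm{Pic}(X_i)_{\mathbb{R}}$. I claim its two extremal rays are spanned by $E_i$ and by $\pi_i^*(H)-E_i$: the divisor $E_i$ is rigid, being contracted by the birational morphism $\pi_i$ onto the curve $C_i$, so it spans an extremal ray; and $\pi_i^*(H)-E_i=\eta_i^*\mathcal{O}_{\mathbb{P}^2}(1)$ is nef with $(\pi_i^*(H)-E_i)^3=\eta_i^*\big(\mathcal{O}_{\mathbb{P}^2}(1)^3\big)=0$, hence not big, hence on the boundary of the big cone, which coincides with $\overline{\mathrm{Eff}}(X_i)$ since $X_i$ is projective and $\mathbb{Q}$-factorial. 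As this cone is pointed and two‑dimensional, those are exactly its extremal rays, so
$$
\overline{\mathrm{Eff}}(X_i)=\big\{\,u\,\pi_i^*(H)+v\,E_i\ :\ u\geqslant 0,\ u+v\geqslant 0\,\big\}.
$$

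Next I would turn the hypothesis into linear inequalities. Write $S\sim s\,\pi_i^*(H)+r\,E_i$ with $s,r\in\mathbb{Z}$; effectivity of $S$ gives $s\geqslant 0$ and $s+r\geqslant 0$. Since $\Delta\sim_{\mathbb{Q}}-K_{X_i}-aS\sim_{\mathbb{Q}}(2-as)\,\pi_i^*(H)+(-1-ar)\,E_i$ is effective, its class lies in $\overline{\mathrm{Eff}}(X_i)$, which forces $2-as\geqslant 0$ and $1-a(s+r)\geqslant 0$. We may assume $a>0$, as otherwise the assertion is trivial. If $s+r\geqslant 1$, the second inequality already gives $a\leqslant\frac{1}{s+r}\leqslant 1$, and we are done.

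The remaining case is $s+r=0$, i.e. $r=-s$ with $s\geqslant 1$ (as $S\neq 0$), so $S\sim s\,(\pi_i^*(H)-E_i)$. Now the first inequality gives $a\leqslant\frac{2}{s}$, which is $\leqslant 1$ once $s\geqslant 2$; so the only thing left to rule out is $s=1$. In that case $S\in|\pi_i^*(H)-E_i|$, and this is precisely the complete linear system defining the conic bundle $\eta_i\colon X_i\to\mathbb{P}^2$, so $S$ is the $\eta_i$-preimage of a line $\ell\subset\mathbb{P}^2$; since $S$ is $G$-invariant and $\eta_i$ is $G$-equivariant, $\ell$ must be a $G$-invariant line in $\mathbb{P}^2=\mathbb{P}(\mathbb{U}_3)$. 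But $\mathbb{U}_3$ is an irreducible representation of $G\cong\mathfrak{A}_4$, hence has no $G$-invariant hyperplane, and so $\mathbb{P}^2$ has no $G$-invariant line — a contradiction. Therefore $s\geqslant 2$ and $a\leqslant 1$ in every case.

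The only delicate point is the description of $\overline{\mathrm{Eff}}(X_i)$ — specifically the claim that $\pi_i^*(H)-E_i$ spans an extremal ray rather than sitting in the interior — which I have sketched via the Mori fiber space $\eta_i$; alternatively one can simply invoke the known two‑ray game for a smooth Fano threefold in family \textnumero 2.16. Everything after that is elementary linear algebra together with the single representation‑theoretic observation that $\mathbb{P}(\mathbb{U}_3)$ carries no $G$-invariant line.
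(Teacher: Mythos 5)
Your proof is correct, but it runs along a different track from the one in the paper. The paper's argument is a four\-/line case split: if $S=E_i$ it intersects the relation $-K_{X_i}\sim_{\mathbb{Q}}aS+\Delta$ with a general fiber $\mathscr{C}$ of the conic bundle $\eta_i$ (getting $2\geqslant 2a$), and if $S\ne E_i$ it pushes the relation forward to $V_4$, where $\mathrm{Pic}(V_4)=\mathbb{Z}H$ forces $a\,\deg_H\pi_i(S)\leqslant 2$, so that $a>1$ would make $\pi_i(S)$ a $G$-invariant hyperplane section of $V_4$ --- impossible since $\mathbb{P}^5=\mathbb{P}(\mathbb{U}_3\oplus\mathbb{U}_3)$ has no $G$-invariant hyperplanes. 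You instead compute the full pseudoeffective cone $\overline{\mathrm{Eff}}(X_i)=\mathrm{Cone}\big(E_i,\ \pi_i^*(H)-E_i\big)$ (correctly justified: $E_i$ is $\pi_i$-exceptional, and $\eta_i^*\mathcal{O}_{\mathbb{P}^2}(1)$ is nef and non-big, hence on the boundary) and read off the bound on $a$ from two linear inequalities, with the single borderline class $S\sim\pi_i^*(H)-E_i$ excluded because $\mathbb{P}^2=\mathbb{P}(\mathbb{U}_3)$ carries no $G$-invariant line. The two exclusions are the same representation-theoretic fact about the irreducibility of $\mathbb{U}_3$ seen from either end of the diagram $V_4\dashleftarrow X_i\to\mathbb{P}^2$; your borderline case is exactly the paper's case of a hyperplane section through $C_i$. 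What your version buys is uniformity (no separate treatment of $S=E_i$, which for you is just the point $s=0$, $r=1$ of the cone) and a reusable description of $\overline{\mathrm{Eff}}(X_i)$; what it costs is the need to justify the extremal-ray claims, which the paper's pushforward argument sidesteps entirely. One cosmetic caveat: you should state explicitly that $|\pi_i^*(H)-E_i|=\eta_i^*|\mathcal{O}_{\mathbb{P}^2}(1)|$ (e.g.\ via $\eta_{i*}\mathcal{O}_{X_i}=\mathcal{O}_{\mathbb{P}^2}$), so that every effective divisor in that class really is the preimage of a line; with that said, the argument is complete.
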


\begin{proof}
If $S=E_i$, then
$2=-K_{X_{i}}\cdot \mathscr{C}=aS\cdot\mathscr{C}+\Delta\cdot\mathscr{C}\geqslant a E_i\cdot\mathscr{C}=2a$
for a~general fiber $\mathscr{C}$ of the~conic bundle $\nu_i$. Thus, we may assume that $S\ne E_i$.
Then $\pi_i(S)$ is a~surface in $V_4$, and $2H\sim_{\mathbb{Q}}a\pi_i(S)+\pi_i(\Delta)$.
Hence, if $a>1$, then $\pi_i(Z)\sim H$,
which is impossible, because $\mathbb{P}^5$ does not contain $G$-invariant hyperplanes.
\end{proof}

Now we are ready to state the~main technical result of this section:

\begin{lemma}
\label{lemma:2-16}
Let $a$ and $\lambda$ be positive rational numbers such that $a\geqslant 1$ and $\lambda<\frac{3}{4}$,
and let $D$ be an~effective $G$-invariant $\mathbb{Q}$-divisor on $X_i$ such that $D\sim_{\mathbb{Q}}\pi_i^*(2H)-aE_i$.
Then $E_i$, $C_i^i$, $Z_i$ and $Z_i^\prime$ are not log canonical centers of the~log pair $(X_i,\lambda D)$.
\end{lemma}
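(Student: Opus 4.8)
The plan is to argue by contradiction: suppose one of $E_i$, $C_i^i$, $Z_i$, $Z_i^\prime$ is a log canonical centre of $(X_i,\lambda D)$. Write $D=\mu E_i+\Omega$, where $\mu=\mathrm{mult}_{E_i}(D)$ and $\Omega$ is effective with $E_i\not\subset\mathrm{Supp}(\Omega)$. First I would dispose of the surface $E_i$. Intersecting with a general fibre $\mathscr C$ of the conic bundle $\eta_i$ and using that $\eta_i|_{E_i}$ is a double cover, so $E_i\cdot\mathscr C=2$, and hence $\pi_i^*(H)\cdot\mathscr C=2$ and $D\cdot\mathscr C=(\pi_i^*(2H)-aE_i)\cdot\mathscr C=4-2a$, effectivity of $D$ gives $a\le 2$, while $2\mu\le\mu\,E_i\cdot\mathscr C+\Omega\cdot\mathscr C=D\cdot\mathscr C=4-2a$ gives $\mu\le 2-a\le 1$. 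In particular $\mathrm{mult}_{E_i}(\lambda D)=\lambda\mu<\tfrac34<1$, so $E_i$ is not a log canonical centre; the bound $\lambda\mu<1$ is what is needed below.

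Next, for the three curves, which all lie on $E_i$, the strategy is to restrict to $E_i$. If $W\in\{C_i^i,Z_i,Z_i^\prime\}$ is a log canonical centre of $(X_i,\lambda D)=(X_i,\lambda\mu E_i+\lambda\Omega)$, then, since $W\subset E_i$ lies in the non-klt locus and $\lambda\mu<1$, raising the coefficient of $E_i$ to $1$ shows that $(X_i,E_i+\lambda\Omega)$ is not purely log terminal at the generic point of $W$; by inversion of adjunction along the smooth Cartier divisor $E_i$ (not a component of $\Omega$) the pair $(E_i,\lambda\Omega|_{E_i})$ is then not log terminal at the generic point of $W$, so $\mathrm{mult}_W(\Omega|_{E_i})\ge 1/\lambda>\tfrac43$. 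On $E_i\cong\mathbb P^1\times\mathbb P^1$ one has $\pi_i^*(H)|_{E_i}\sim 2f_{E_i}$ and $-E_i|_{E_i}\sim s_{E_i}-f_{E_i}$, hence $\Omega|_{E_i}\sim_{\mathbb Q}(a+\mu)s_{E_i}+(4-a-\mu)f_{E_i}$. For $W=Z_i$ or $W=Z_i^\prime$ we have $W\sim s_{E_i}+3f_{E_i}$; writing $\Omega|_{E_i}=\mathrm{mult}_W(\Omega|_{E_i})\,W+R$ with $R$ effective and comparing coefficients of $f_{E_i}$ in the class of $R$ forces $3\,\mathrm{mult}_W(\Omega|_{E_i})\le 4-a-\mu\le 3$, i.e. $\mathrm{mult}_W(\Omega|_{E_i})\le 1$, a contradiction. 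This settles $Z_i$ and $Z_i^\prime$.

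The curve $C_i^i$ is the crux, and restriction to $E_i$ alone does not suffice: since $C_i^i\sim s_{E_i}+f_{E_i}$, the pseudo-effectivity argument only yields $\mathrm{mult}_{C_i^i}(\Omega|_{E_i})\le 4-a-\mu\le 3$, which is compatible with $>\tfrac43$; indeed $D=E_i+F_{3,i}$ (with $a=1$) shows the obstruction is genuinely three-dimensional, as $(X_i,\lambda(E_i+F_{3,i}))$ has $C_i^i$ as a log canonical centre exactly for $\lambda\ge\tfrac34$ because $F_{3,i}$ is tangent to $E_i$ along $C_i^i$ (Corollary~\ref{corollary:2-16-upper-bound}). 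So here I would also split off $F_{3,i}$, writing $D=\mu E_i+\nu F_{3,i}+\Omega^\prime$; intersecting with a fibre of $E_i\to C_i$, on which $F_{3,i}$ cuts out $2C_i^i$, gives $2\nu\le a+\mu\le 2$, with $\Omega^\prime|_{E_i}\sim_{\mathbb Q}(a+\mu-2\nu)s_{E_i}+(4-2\nu-a-\mu)f_{E_i}$ and $\mathrm{mult}_{C_i^i}(\Omega^\prime|_{E_i})\ge 1/\lambda-2\nu$. The extra input is the log canonicity of $(X_i,\lambda D)$ measured on the weighted blow up $v$ of $C_i^i$ with weights $(2,1)$ adapted to $E_i$ and its tangency with $F_{3,i}$: one has $A_{X_i}(v)=3$ and $v(E_i)=v(F_{3,i})=2$, so $3\ge\lambda v(D)=\lambda(2\mu+2\nu+v(\Omega^\prime))$. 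Combining this with $\mathrm{mult}_{C_i^i}(\Omega^\prime|_{E_i})\le\Omega^\prime|_{E_i}\cdot f_{E_i}=a+\mu-2\nu$ and a case analysis of the components of $\Omega^\prime$ tangent to $E_i$ along $C_i^i$ — bounded via the explicit list of $G$-invariant surfaces through $C_i^i$ (coming from the semi-invariants $f$, $g$, $f_{j,i}$), using that such a tangency raises $v(\cdot)$ and $\mathrm{mult}_{C_i^i}(\cdot|_{E_i})$ in tandem — should produce $\lambda\ge\tfrac34$, the desired contradiction. I expect this last step, namely extracting the sharp constant $\tfrac34$ from the tangency of $F_{3,i}$ with $E_i$ and controlling the contribution of components of $D$ other than $E_i$ and $F_{3,i}$ that are tangent to $E_i$ along $C_i^i$, to be the main obstacle; the cases $E_i$, $Z_i$, $Z_i^\prime$ are only intersection computations on $X_i$ and on $E_i\cong\mathbb P^1\times\mathbb P^1$ plus one application of inversion of adjunction.
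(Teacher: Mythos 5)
Your treatment of $E_i$ and of $Z_i$, $Z_i^\prime$ is correct, and for the two curves it is actually a genuine shortcut compared with the paper. The paper only uses the restriction $\Delta\vert_{E_i}=\delta Z+\Upsilon$ to bound the threefold multiplicity $m\leqslant\frac{5}{3}$, and then proves the $Z_i$, $Z_i^\prime$ case by blowing up $Z$, identifying the exceptional surface with $\mathbb{F}_8$, locating the lifted log canonical centre on it, and deriving $\varepsilon>\frac{5}{2}$. Your observation that inversion of adjunction (Koll\'ar--Mori, Theorem~5.50, which the paper itself invokes in exactly this way elsewhere) forces the coefficient $\delta\geqslant\frac{1}{\lambda}>\frac{4}{3}$, while effectivity of $\Upsilon\sim_{\mathbb{Q}}(a+\mu-\delta)s_{E_i}+(4-a-\mu-3\delta)f_{E_i}$ forces $\delta\leqslant\frac{4-a-\mu}{3}\leqslant 1$, kills these two curves in three lines. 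You also correctly diagnose why the same computation fails for $C_i^i\sim s_{E_i}+f_{E_i}$ and why $\lambda(F_{3,i}+E_i)$ shows the bound $\frac{3}{4}$ is sharp there.

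The gap is precisely the case $C_i^i$, which you flag as the main obstacle but do not prove --- and it is where essentially all of the content of the lemma sits. Two specific points in your sketch would fail as written. First, the inequality $3\geqslant\lambda v(D)$ for the single $(2,1)$-weighted valuation $v$ cannot rule out $C_i^i$ being a log canonical centre: that requires \emph{some} valuation centred at $C_i^i$ to have log discrepancy $\leqslant 0$ for $(X_i,\lambda D)$, not this particular one, so you would still have to chase the centre through further birational modifications. Second, your plan to bound the contribution of components of $\Omega^\prime$ tangent to $E_i$ along $C_i^i$ ``via the explicit list of $G$-invariant surfaces through $C_i^i$'' is not justified: the irreducible components of $D$ need only be permuted by $G$ (not individually invariant) and can have arbitrary degree, so the quadric sections $\overline{F}_{j,i}$ do not exhaust the possibilities. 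The paper's actual argument for $C_i^i$ is substantially different and genuinely global: after tie-breaking to remove $F_{3,i}$ from $\mathrm{Supp}(D)$ (and showing $E_i$ cannot be removed), it performs two successive blow-ups, pins down the lifted centre to the class $s_F+2f_F$ on $F\cong\mathbb{F}_2$, proves $-K_Y$ is nef on the second blow-up --- which requires checking, via the explicit equations of eight lines and of the two nets $\mathcal{M}$, $\mathcal{M}^\prime$, that $\widetilde{Z}$ avoids the $(-K_V)$-trivial curves --- and then contradicts $m+\widetilde{m}>\frac{8}{3}$ with $0\leqslant -K_Y^2\cdot\widehat{D}=14-6(m+\widetilde{m})$. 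Nothing in your outline substitutes for that computation.
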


Let us use this result to prove

\begin{proposition}
\label{proposition:2-16}
One has $\alpha_G(X_1)=\alpha_G(X_2)=\alpha_G(X_3)=\alpha_G(X_4)=\frac{3}{4}$.
\end{proposition}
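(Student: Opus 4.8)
Corollary~\ref{corollary:2-16-upper-bound} already gives $\alpha_G(X_i)\leqslant\frac34$, so the plan is to prove $\alpha_G(X_i)\geqslant\frac34$ using Lemma~\ref{lemma:2-16} as the main engine, together with Lemmas~\ref{lemma:curves-small-degree} and \ref{lemma:2-16-surfaces}. Since $G$ is finite, by \cite[Lemma~1.4.1]{ACCFKMGSSV} it suffices to show that for every rational $\lambda<\frac34$ and every effective $G$-invariant $\mathbb{Q}$-divisor $D\sim_{\mathbb{Q}}-K_{X_i}$ the pair $(X_i,\lambda D)$ is log canonical. Suppose not. Replacing $\lambda$ by $\mathrm{lct}(X_i,D)\leqslant\lambda<\frac34$, we may assume $(X_i,\lambda D)$ is log canonical but not Kawamata log terminal; since $-(K_{X_i}+\lambda D)\sim_{\mathbb{Q}}(1-\lambda)(-K_{X_i})$ is ample, the non-klt locus of $(X_i,\lambda D)$ is connected by the Koll\'ar--Shokurov connectedness theorem. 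Recall also that $-K_{X_i}\sim\pi_i^*(2H)-E_i$, so $D\sim_{\mathbb{Q}}\pi_i^*(2H)-E_i$.

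First I would produce a $G$-invariant irreducible log canonical centre $W$. A surface cannot be a log canonical centre: if $S$ were one, then $\mathrm{mult}_S D=\frac1\lambda>1$, and summing over the $G$-orbit of $S$ we would get $-K_{X_i}\sim_{\mathbb{Q}}\frac1\lambda S'+\Delta'$ with $S'$ a $G$-invariant surface and $\Delta'$ effective, contradicting Lemma~\ref{lemma:2-16-surfaces}. Hence the non-klt locus has no surface components, so a minimal log canonical centre $W$ is a point or a curve. If $W$ is a point, then, as distinct minimal log canonical centres are disjoint, the $G$-orbit of $W$ consists of pairwise disjoint points, and connectedness of the non-klt locus (which now has only points and curves as components, the curves being pairwise disjoint when minimal) forces $W$ to be $G$-fixed; this is impossible, since $G$ acts without fixed points on $\mathbb{P}^2=\mathbb{P}(\mathbb{U}_3)$ and $\eta_i$ is $G$-equivariant. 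Likewise, if $W$ is a curve, disjointness of distinct minimal centres together with the absence of surface components in the (connected) non-klt locus forces $W$ to be $G$-invariant.

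So $W$ is a $G$-invariant irreducible curve which is a minimal log canonical centre of $(X_i,\lambda D)$. Kawamata's subadjunction yields an effective $\mathbb{Q}$-divisor $\Delta_W$ on $W$ with $(K_{X_i}+\lambda D)\vert_W\sim_{\mathbb{Q}}K_W+\Delta_W$; the left-hand side is anti-ample on $W$, so $W\cong\mathbb{P}^1$ and $0<(1-\lambda)(-K_{X_i}\cdot W)=-(K_{X_i}+\lambda D)\cdot W\leqslant\deg(-K_W)=2$, whence $-K_{X_i}\cdot W\leqslant\frac{2}{1-\lambda}<8$. By Lemma~\ref{lemma:curves-small-degree} the curve $W$ is one of $C_1^i,\dots,C_4^i,Z_i,Z_i'$. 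The curves $C_i^i$, $Z_i$, $Z_i'$ are excluded by Lemma~\ref{lemma:2-16} applied with $a=1$. It remains to rule out $W=C_j^i$ with $j\neq i$. Since $C_1,\dots,C_4$ are pairwise disjoint, $C_j^i$ is disjoint from $E_i$ and $\pi_i$ is an isomorphism near $C_j^i$, so $C_j$ is a log canonical centre of $(V_4,\lambda\Gamma)$, where $\Gamma=(\pi_i)_*D\sim_{\mathbb{Q}}2H$ is $G$-invariant and effective; because a codimension-two log canonical centre on a smooth threefold forces multiplicity at least $1$ there, $a:=\mathrm{mult}_{C_j}\Gamma\geqslant\frac1\lambda>1$. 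Then $D_j:=\pi_j^*(\Gamma)-E_j$ is effective (as $a\geqslant1$), $G$-invariant and $\sim_{\mathbb{Q}}-K_{X_j}$, and from $K_{X_j}+\lambda D_j\sim_{\mathbb{Q}}\pi_j^*(K_{V_4}+\lambda\Gamma)+(1-\lambda)E_j$ one sees that $(X_j,\lambda D_j)$ has a divisorial valuation of discrepancy strictly below $-1$ whose centre lies in $E_j$. Rerunning the previous two paragraphs on $X_j$, the surviving possibilities for a log canonical centre of $(X_j,\lambda D_j)$ that lies in $E_j$ are $E_j$, $C_j^j$, $Z_j$, $Z_j'$ (or a point), and these are killed by Lemmas~\ref{lemma:2-16-surfaces} and \ref{lemma:2-16} — here the generality $a\geqslant1$ in Lemma~\ref{lemma:2-16} is exactly what is used — and by the absence of $G$-fixed points. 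This contradiction gives $\alpha_G(X_i)\geqslant\frac34$, hence $\alpha_G(X_i)=\frac34$.

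I expect the main obstacle to be the case $W=C_j^i$ with $j\neq i$, and more precisely making the transfer through $V_4$ to $X_j$ completely rigorous: one has to justify that $C_j$ is genuinely a log canonical centre of $(V_4,\lambda\Gamma)$, that $\mathrm{mult}_{C_j}\Gamma\geqslant1$ so that $D_j$ is effective and Lemma~\ref{lemma:2-16} is applicable, and — the delicate point — that iterating this transfer among $X_1,\dots,X_4$ terminates rather than cycling through the four disjoint conics; the argument must explain why, after the transfer, the surviving log canonical centre on $X_j$ is of the type $E_j$, $C_j^j$, $Z_j$ or $Z_j'$ (handled directly by Lemma~\ref{lemma:2-16}) rather than again a strict transform of some $C_k$. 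The first step (choosing $W$ to be $G$-invariant) and the subadjunction degree bound $-K_{X_i}\cdot W<8$, by contrast, I expect to be routine.
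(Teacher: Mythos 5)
Your reduction to a $G$-invariant smooth rational curve $W$ with $-K_{X_i}\cdot W<8$, the exclusion of surfaces via Lemma~\ref{lemma:2-16-surfaces} and of points via the absence of $G$-fixed points, and the use of Lemma~\ref{lemma:2-16} to kill $C_i^i$, $Z_i$, $Z_i^\prime$ all match the paper, which packages the first step as a citation of \cite[Lemma~A.4.8]{ACCFKMGSSV}. The genuine gap is exactly where you flag it: the case $W=C_j^i$ with $j\neq i$. Your transfer through $V_4$ produces a pair $(X_j,\lambda D_j)$ that is \emph{not} log canonical along some subset of $E_j$, and at that point you have lost the degree bound: the inequality $-K\cdot W<8$ came from subadjunction applied to a minimal lc centre of a pair taken at its log canonical threshold, so ``rerunning the previous two paragraphs'' on $X_j$ would force you to lower $\lambda$ again, after which the new minimal centre could perfectly well be some $C_k^j$ with $k\neq j$ (for instance because $\Gamma=(\pi_i)_*D$ may have large multiplicity along $C_i$), and the argument cycles. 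Your claim that the surviving possibilities inside $E_j$ are only $E_j$, $C_j^j$, $Z_j$, $Z_j^\prime$ or a point is unjustified as stated: a priori the non-klt locus in $E_j$ could be any $G$-invariant curve there, and Lemma~\ref{lemma:curves-small-degree} only classifies those of anticanonical degree less than $8$.

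The paper closes this case without iterating, by retaining more information about the new centre. It blows up $Z=C_j^i$ on $X_i$ and observes that (unless $\lambda m-1\geqslant 1$, which is excluded) the pair $(V,\lambda\widetilde{D}+(\lambda m-1)F)$ has an lc centre $\widetilde{Z}\subset F$ that is a \emph{section} of $F\to Z$; contracting the strict transform of $E_i$ identifies $F$ with $E_j$ and $\widetilde{Z}$ with a section $\overline{Z}$ of $E_j\to C_j$, so $\overline{Z}\sim s_{E_j}+kf_{E_j}$ for some $k\geqslant 0$. Inversion of adjunction on $E_j\cong\mathbb{P}^1\times\mathbb{P}^1$ gives $\overline{D}\vert_{E_j}=\delta\overline{Z}+\Upsilon$ with $\delta\geqslant\frac{1}{\lambda}>\frac{4}{3}$, while $\overline{D}\vert_{E_j}\sim_{\mathbb{Q}}ms_{E_j}+(4-m)f_{E_j}$ with $m>\frac{4}{3}$; effectivity of $\Upsilon$ forces $4-m-\delta k\geqslant 0$, hence $k\in\{0,1\}$ and $-K_{X_j}\cdot\overline{Z}=k+3\leqslant 4<8$, so Lemma~\ref{lemma:curves-small-degree} leaves only $\overline{Z}=C_j^j$, which Lemma~\ref{lemma:2-16} forbids. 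It is this ``section of $E_j\to C_j$'' constraint together with the short numerical computation on $E_j$ --- both absent from your proposal --- that rules out cycling through the four conics and completes the proof.
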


\begin{proof}
Suppose that $\alpha_G(X_i)<\frac{3}{4}$. Let us seek for a~contradiction.
Since $X_i$ does not have $G$-fixed points,
it follows from \cite[Lemma~A.4.8]{ACCFKMGSSV} and Lemma~\ref{lemma:2-16-surfaces}
that there exists a~$G$-invariant $\mathbb{Q}$-divisor $D$ on the~threefold $X_i$ such that
$D\sim_{\mathbb{Q}} -K_{X_i}$,
the log pair $(X_i,\lambda D)$ is strictly log canonical for some positive rational number $\lambda<\frac{3}{4}$,
and the~only center of log canonical singularities of this log pair is an~irreducible $G$-invariant
smooth irreducible rational curve $Z\subset X_i$ such that $-K_{X_i}\cdot Z<8$.
Then it must be one of the~curves $C_1^i$, $C_2^i$, $C_3^i$, $C_4^i$, $Z_i$, $Z_i^\prime$ by Lemma~\ref{lemma:curves-small-degree}.
On the~other hand, it follows from  Lemma~\ref{lemma:2-16} that $Z$ is not one of the~curves $C_i^i$, $Z_i$, $Z_i^\prime$,
so that $Z=C_j^i$ for some $j\in\{1,2,3,4\}$ such that $j\ne i$.

Let $\nu\colon V\to X_i$ be the~blow up of the~curve $Z$, let $F$ be the~$\nu$-exceptional surface,
let~$\widetilde{D}$~be strict transform of the~divisor $D$ via $\nu$, and let $m=\mathrm{mult}_{Z}(D)$.
Then $m\geqslant\frac{1}{\lambda}$ and
$$
K_{V}+\lambda\widetilde{D}+\big(\lambda m-1\big)F\sim_{\mathbb{Q}}\nu^*\big(K_{X_i}+\lambda D\big).
$$
Thus, either $\lambda m-1\geqslant 1$ or
the surface $F$ contains an~irreducible $G$-invariant smooth rational curve $\widetilde{Z}$~such that $\nu(\widetilde{Z})=Z$,
the curve $\widetilde{Z}$ is a~section of the~projection $F\to Z$,
and $\widetilde{Z}$ is a~center of log canonical singularities the~log pair $(V,\lambda\widetilde{D}+(\lambda m-1)F)$.

Let $\upsilon\colon V\to X_j$ be the~birational contraction of the~strict transform of the~surface $E_i$,
and let $\overline{D}=\upsilon(\widetilde{D})$.
Then $\upsilon(F)=E_j$ and $\overline{D}\sim_{\mathbb{Q}}\pi_j(2H)-mE_j$, so that
$$
\overline{D}+\Big(m-\frac{1}{\lambda}\Big)E_j\sim_{\mathbb{Q}}\pi_j(2H)-\frac{1}{\lambda}E_j.
$$
Then the~surface $E_j$ and the~curves $C_j^j$, $Z_j$ and $Z_j^\prime$ are not log canonical centers of the~log pair $(X_j,\lambda\overline{D}+(\lambda m-1)E_j)$ by Lemma~\ref{lemma:2-16}.
In particular, we see that $\lambda m-1<1$, so that
the surface $E_j$ contains an~irreducible $G$-invariant smooth rational curve $\overline{Z}$~such that $\pi_j(\overline{Z})=Z$,
the curve $\overline{Z}$ is a~section of the~projection $E_j\to C_j$,
and $\overline{Z}$ is a~center of log canonical singularities of the~log pair $(X_j,\lambda\overline{D}+(\lambda m-1)E_j)$.
Let us repeat that the~curve $\overline{Z}$ is not one of the~curves $C_j^j$, $Z_j$ and $Z_j^\prime$ by Lemma~\ref{lemma:2-16}.

Recall that $E_j\cong\mathbb{P}^1\times\mathbb{P}^1$. Write
$\overline{D}\vert_{E_j}=\delta Z+\Upsilon$.
where $\delta$ is a~non-negative rational number,
and $\Upsilon$ is an~effective $\mathbb{Q}$-divisor on $E_j$ such that its support does not contain the~curve $Z$.
Then $\delta\geqslant\frac{1}{\lambda}>\frac{4}{3}$ by \cite[Theorem~5.50]{KoMo98}. But
$$
\overline{D}\big\vert_{E_j}\sim_{\mathbb{Q}}\Big(\pi_j(2H)-mE_j\Big)\big\vert_{E_j}\sim_{\mathbb{Q}} 4f_{E_j}+m(s_{E_j}-f_{E_j})=ms_{E_j}+(4-m)f_{E_j},
$$
and $Z\sim s_{E_j}+kf_{E_j}$ for some non-negative integer $k$. This gives
$$
\Upsilon\sim_{\mathbb{Q}}ms_{E_j}+(4-m)f_{E_j}-\delta\big(s_{E_j}+kf_{E_j}\big)=(m-\delta)s_{E_j}+(4-m-\delta k)f_{E_j}.
$$
Since $m\geqslant\frac{1}{\lambda}>\frac{4}{3}$ and $\delta>\frac{4}{3}$, we get $k=0$ or $k=1$,
so that $Z=C_{j}^j$ by Lemma~\ref{lemma:curves-small-degree}, which is impossible by Lemma~\ref{lemma:2-16}.
\end{proof}

By Proposition~\ref{proposition:2-16} and  Theorem~\ref{theorem:Tian},
the~smooth Fano threefolds $X_1\cong X_2$ and $X_3\cong X_4$ are K-polystable.
However, to complete the~proof of Proposition~\ref{proposition:2-16}, we have to prove technical Lemma~\ref{lemma:2-16}.
Note that it is enough to prove this lemma for $X_1$ and $X_3$,
so that we will assume in the~following that either $i=1$ or $i=3$.

Fix rational numbers $a$ and $\lambda$ such that $a\geqslant 1$ and $0<\lambda<\frac{3}{4}$.
Let $D$ be a~$G$-invariant effective $\mathbb{Q}$-divisor on the~threefold $X_i$ such that $D\sim_{\mathbb{Q}}\pi_i^*(2H)-aE_i$.
Then we must show that $E_i$, $C_i^i$, $Z_i$ and $Z_i^\prime$ are also not log canonical centers of the~ pair $(X_i,\lambda D)$.
Replacing $D$ by $D+(a-1)E_i$, we may assume that $a=1$, so that $D\sim_{\mathbb{Q}} -K_{X_i}$.
Write
$D=\varepsilon E_i+\Delta$,
where $\varepsilon\in\mathbb{Q}_{\geqslant 0}$,
and $\Delta$ is effective $\mathbb{Q}$-divisor on $X_i$ whose support does not contain $E_i$.
Then $\varepsilon\leqslant 1$ by Lemma~\ref{lemma:2-16-surfaces},
so that $E_i$ is not a~log canonical center of the~log pair $(X_i,\lambda D)$.

\begin{lemma}
\label{lemma:2-16-second}
Neither $Z_i$ nor $Z_i^\prime$ is a~log canonical center of the~pair $(X_i,\lambda D)$.
\end{lemma}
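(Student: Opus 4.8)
The plan is to argue by contradiction, treating $Z_i$ and $Z_i^\prime$ simultaneously, since only the numerical class $Z_i\sim Z_i^\prime\sim s_{E_i}+3f_{E_i}$ and the irreducibility of these curves will be used. So suppose $Z$ denotes one of them and $Z$ is a log canonical center of the log pair $(X_i,\lambda D)$. Recall that $D=\varepsilon E_i+\Delta$ with $0\leqslant\varepsilon\leqslant 1$ and $E_i\not\subset\mathrm{Supp}(\Delta)$, so the coefficient of $E_i$ in $\lambda D$ equals $\lambda\varepsilon\leqslant\lambda<\frac{3}{4}$. Consider the enlarged boundary $E_i+\lambda\Delta$. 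If the pair $(X_i,E_i+\lambda\Delta)$ were purely log terminal at some point $p\in Z$, then lowering the coefficient of $E_i$ from $1$ to $\lambda\varepsilon<1$ would make $(X_i,\lambda\varepsilon E_i+\lambda\Delta)=(X_i,\lambda D)$ Kawamata log terminal at $p$ (all discrepancies of divisors with center through $p$ would be $>-1$), contradicting that $Z$ is a log canonical center of $(X_i,\lambda D)$. Hence $(X_i,E_i+\lambda\Delta)$ is not purely log terminal along $Z$.

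Next I would apply inversion of adjunction along the smooth surface $E_i$. Since $X_i$ is smooth, $E_i$ is a smooth prime divisor with $E_i\not\subset\mathrm{Supp}(\Delta)$, and $(K_{X_i}+E_i)\vert_{E_i}=K_{E_i}$ with trivial different, Theorem~5.50 in \cite{KoMo98} gives that $(E_i,\lambda\Delta\vert_{E_i})$ is not Kawamata log terminal along $Z$, hence not klt at the generic point of $Z$. As $E_i$ is a smooth surface and $Z$ an irreducible curve on it, this forces $\mathrm{mult}_{Z}\big(\lambda\Delta\vert_{E_i}\big)\geqslant 1$. Writing $\Delta\vert_{E_i}=\delta Z+\Upsilon$ with $\delta\in\mathbb{Q}_{\geqslant 0}$ and $\Upsilon$ effective with $Z\not\subset\mathrm{Supp}(\Upsilon)$, we get $\delta\geqslant\frac{1}{\lambda}>\frac{4}{3}$. (This is exactly the mechanism already used in the proof of Proposition~\ref{proposition:2-16}.)

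Finally I would reach a numerical contradiction on $E_i\cong\mathbb{P}^1\times\mathbb{P}^1$. From $\Delta\sim_{\mathbb{Q}}\pi_i^*(2H)-(1+\varepsilon)E_i$, together with $\pi_i^*(H)\vert_{E_i}\sim 2f_{E_i}$ and $-E_i\vert_{E_i}\sim s_{E_i}-f_{E_i}$, one obtains
$$
\Delta\big\vert_{E_i}\sim_{\mathbb{Q}}4f_{E_i}+(1+\varepsilon)\big(s_{E_i}-f_{E_i}\big)=(1+\varepsilon)s_{E_i}+(3-\varepsilon)f_{E_i},
$$
so that
$$
\Upsilon\sim_{\mathbb{Q}}(1+\varepsilon-\delta)s_{E_i}+(3-\varepsilon-3\delta)f_{E_i}.
$$
Since $s_{E_i}$ and $f_{E_i}$ are the two rulings of $\mathbb{P}^1\times\mathbb{P}^1$ and $\Upsilon$ is effective, both coefficients must be non-negative; in particular $3-\varepsilon-3\delta\geqslant 0$, which gives $\delta\leqslant 1-\frac{\varepsilon}{3}\leqslant 1$, contradicting $\delta>\frac{4}{3}$. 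This proves that neither $Z_i$ nor $Z_i^\prime$ is a log canonical center of $(X_i,\lambda D)$.

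The only delicate point is the adjunction step: one must first replace $(X_i,\lambda D)$, in which $E_i$ carries the fractional coefficient $\lambda\varepsilon$, by the enlarged pair $(X_i,E_i+\lambda\Delta)$ so that \cite[Theorem~5.50]{KoMo98} applies verbatim, and verify that purely log terminality of the enlarged pair would propagate back to Kawamata log terminality of $(X_i,\lambda D)$. Everything after that is the routine intersection computation on the quadric surface $E_i$.
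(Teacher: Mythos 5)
Your proof is correct, and it takes a genuinely different and shorter route than the paper's. The paper uses the decomposition $\Delta\vert_{E_i}=\delta Z+\Upsilon$ only with the weak bound $\delta\geqslant\mathrm{mult}_Z(\Delta)$, deducing merely $m=\mathrm{mult}_Z(D)\leqslant\frac{5}{3}$; it then blows up $Z$, identifies the exceptional surface with $\mathbb{F}_8$ via $\mathcal{N}_{Z/X_i}\cong\mathcal{O}_{\mathbb{P}^1}(6)\oplus\mathcal{O}_{\mathbb{P}^1}(-2)$, shows the induced log canonical center there must be the negative section $s_F=\widetilde{E}_i\cap F$, and only then restricts to $\widetilde{E}_i$ to reach the contradiction $\varepsilon>\frac{5}{2}$. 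You instead apply inversion of adjunction once, directly to $(X_i,E_i+\lambda\Delta)$ along $E_i$, obtaining $\delta\geqslant\frac{1}{\lambda}>\frac{4}{3}$, which is incompatible with the effectivity constraint $3-\varepsilon-3\delta\geqslant 0$ forced by $Z\sim s_{E_i}+3f_{E_i}$. This mechanism is exactly the one the authors deploy elsewhere (to get $\theta\geqslant\frac{1}{\lambda}$ on $F$ later in this same proof, and $\delta\geqslant\frac{1}{\lambda}$ on $E_j$ in the proof of Proposition~\ref{proposition:2-16}), so the step is squarely within the paper's toolbox; using it one stage earlier eliminates the blow-up altogether. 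You also correctly flag and handle the one delicate point: raising the coefficient of $E_i$ to $1$ is legitimate because it only decreases discrepancies, while the valuation witnessing $Z$ as a log canonical center is exceptional and distinct from $E_i$; after invoking \cite[Theorem~5.50]{KoMo98} the non-klt locus of the surface pair must contain the whole curve $Z$ (not merely meet it), so $Z$ indeed appears with coefficient at least $1$ in $\lambda\Delta\vert_{E_i}$. Note that the shortcut succeeds precisely because the class $s_{E_i}+3f_{E_i}$ forces $\delta\leqslant 1$; for the remaining center $C_i^i\sim s_{E_i}+f_{E_i}$ the same computation only yields $\delta\leqslant 1+\varepsilon$, which is why the rest of Lemma~\ref{lemma:2-16} cannot be dispatched in the same way.
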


\begin{proof}
Denote by $Z$ one of the~curves $Z_i$ or $Z_i^\prime$. Let $m_{\Delta}=\mathrm{mult}_{Z}(\Delta)$ and $m=\mathrm{mult}_{Z}(D)$.
Then $m=m_{\Delta}+\varepsilon$. Let us bound $m$. To do this, write $\Delta\big\vert_{E_i}=\delta Z+\Upsilon$,
where $\delta$ is a~rational number such that $\delta\geqslant m_{\Delta}$,
and $\Upsilon$ is an~effective $\mathbb{Q}$-divisor on the~surface $E_i\cong\mathbb{P}^1\times\mathbb{P}^1$ such that its support does not contain $Z$.
Observe that
$$
\Delta\big\vert_{E_i}\sim_{\mathbb{Q}}\Big(\pi_i(2H)-(1+\varepsilon)E_i\Big)\big\vert_{E_i}\sim_{\mathbb{Q}} 4f_{E_i}+(1+\varepsilon)(s_{E_i}-f_{E_i})=(1+\varepsilon)s_{E_i}+(3-\varepsilon)f_{E_i}
$$
and $Z\sim s_{E_i}+3f_{E_i}$. This gives
$\Upsilon\sim_{\mathbb{Q}}(1+\varepsilon-\delta)s_{E_i}+(3-\varepsilon-3\delta)f_{E_i}$,
which gives $\delta\leqslant 1-\frac{\varepsilon}{3}$. In particular,  we get $m=m_{\Delta}+\varepsilon\leqslant\delta+\varepsilon\leqslant 1+\frac{2\varepsilon}{3}\leqslant\frac{5}{3}$.

Let $\nu\colon V\to X_i$ be the~blow up of the~curve $Z$, and let $F$ be the~$\nu$-exceptional surface.
Then the~action of the~group $G$ lifts to the~threefold $V$, since $Z$ is $G$-invariant.

Recall that $Z$ is cut out on $E_i$ by a~$G$-invariant surface in $|-K_{X_i}|$.
Since $Z\cong\mathbb{P}^1$, this gives
$\mathcal{N}_{Z/X_i}\cong\mathcal{O}_{\mathbb{P}^1}(6)\oplus\mathcal{O}_{\mathbb{P}^1}(-2)$,
because $-K_{X_{i}}\cdot  Z=6$, and $Z^2=6$ on the~surface $E_i$. Thus, we have $F\cong\mathbb{F}_8$.
Moreover, since $F^3=-4$, we deduce that
$-F\big\vert_F\sim s_F+2f_F$,
where $s_F$ is a~section of the~projection $F\to Z$ such that $s_F^2=-8$, and $f_F$ is a~fiber of this projection.
Let $\widetilde{E}_i$ and $\widetilde{D}$ be the~proper transforms of the~divisors $E_i$ and $D$ on the~threefold $V$, respectively.
Then $\widetilde{E}_i\vert_F\sim(\nu^*(E_i)-F)\vert_F\sim s_F$,
since $E\cdot Z=-2$. Thus, we see that $\widetilde{E}_i\vert_F=s_F$.
Similarly, we get $\widetilde{D}\big\vert_F\sim_{\mathbb{Q}} ms_F+(2m+6)f_F$.

Now we suppose that $Z$ is a~log canonical center of the~pair $(X_i,\lambda D)$.
Let us seek for a~contradiction. Since $\lambda m-1<1$ and
$K_{V}+\lambda\widetilde{D}+(\lambda m-1)F\sim_{\mathbb{Q}}\nu^*(K_{X_i}+\lambda D)$,
the surface $F$ contains an~irreducible $G$-invariant smooth rational curve $\widetilde{Z}$~such that $\nu(\widetilde{Z})=Z$,
the curve $\widetilde{Z}$ is a~section of the~projection $F\to Z$,
and $\widetilde{Z}$ is a~center of log canonical singularities the~log pair $(V,\lambda\widetilde{D}+(\lambda m-1)F)$.
Write $\widetilde{D}\vert_{F}=\theta\widetilde{Z}+\Omega$,
where $\theta$ is a~non-negative rational number,
and $\Omega$ is an~effective $\mathbb{Q}$-divisor on $F$ such that its support does not contain the~curve $\widetilde{Z}$.
Then using \cite[Theorem~5.50]{KoMo98}, we get $\theta\geqslant\frac{1}{\lambda}>\frac{4}{3}$.
On~the~other hand, we have $\widetilde{Z}\sim s_F+kf_F$ for some non-negative integer $k$ such that either $k=0$ or $k\geqslant 8$.
Thus, we have $\Omega\sim_{\mathbb{Q}} (m-\theta)s_F+(2m+6-\theta k)f_F$.
Hence, if $k\ne 0$, then
$0\leqslant 2m+6-\theta k\leqslant 2m+6-8\theta<2m+6-\frac{32}{3}=\frac{6m-14}{3}$,
so that $m>\frac{7}{3}$, which is impossible, since $m\leqslant\frac{5}{3}$.
Then $k=0$, so that $\widetilde{Z}=s_F=\widetilde{E}_i\cap F$.

Recall that $D=\varepsilon E_i+\Delta$, where $\varepsilon$ is a~non-negative rational number such that $\varepsilon\leqslant 1$,
and $\Delta$ is an~effective $\mathbb{Q}$-divisor on the~threefold $X_i$ whose support does not contain $E_i$.
Denote by $\widetilde{\Delta}$ the~proper transform of this divisor on the~threefold $V$.
Then $\widetilde{Z}$ is a~center of log canonical singularities the~log pair
$(V,\lambda\varepsilon\widetilde{E}_i+\lambda\widetilde{\Delta}+(\lambda m_{\Delta}+\lambda\varepsilon-1)F)$,
where $m_{\Delta}=\mathrm{mult}_{Z}(\Delta)$.
Using \cite[Theorem~5.50]{KoMo98} again, we see that $\widetilde{Z}$ is a~ center of log canonical singularities the~log pair
$(\widetilde{E}_i,\lambda\widetilde{\Delta}\vert_{\widetilde{E}_i}+(\lambda m_{\Delta}+\lambda\varepsilon-1)F\vert_{\widetilde{E}_i})$,
where $F\vert_{\widetilde{E}_i}=\widetilde{Z}$. This simply means that
$\lambda\widetilde{\Delta}\vert_{\widetilde{E}_i}+(\lambda m_{\Delta}+\lambda\varepsilon-1)F\vert_{\widetilde{E}_i}=c\widetilde{Z}+\Xi$
for some rational number $c\geqslant 1$, where $\Xi$ is an~effective $\mathbb{Q}$-divisor on $\widetilde{E}_i$
whose support does not contain the~curve $\widetilde{Z}$.

Now, let us compute the~numerical class of the~restriction $\widetilde{\Delta}\big\vert_{\widetilde{E}_i}$.
Observe that $\widetilde{E}_i\cong E_i$.
Denote by $s_{\widetilde{E}_i}$ and $f_{\widetilde{E}_i}$ the~strict transforms on $\widetilde{E}_i$ of the~curves $s_{E_i}$ and $f_{E_i}$, respectively.
Then
$\widetilde{\Delta}\vert_{\widetilde{E}_i}\sim_{\mathbb{Q}}(1+\varepsilon)s_{\widetilde{E}_i}+(3-\varepsilon)f_{\widetilde{E}_i}-m_{\Delta}\widetilde{Z}=(1+\varepsilon-m_{\Delta})s_{\widetilde{E}_i}+(3-\varepsilon-3m_{\Delta})f_{\widetilde{E}_i}$.
Thus, we see that
\begin{multline*}
c\big(s_{\widetilde{E}_i}+3f_{\widetilde{E}_i}\big)+\Xi\sim_{\mathbb{Q}}\lambda\widetilde{\Delta}\big\vert_{\widetilde{E}_i}+\big(\lambda m_{\Delta}+\lambda\varepsilon-1\big)F\big\vert_{\widetilde{E}_i}\sim_{\mathbb{Q}}\\
\sim_{\mathbb{Q}}\lambda(1+\varepsilon-m_{\Delta})s_{\widetilde{E}_i}+\lambda(3-\varepsilon-3m_{\Delta})f_{\widetilde{E}_i}+\big(\lambda m_{\Delta}+\lambda\varepsilon-1\big)\widetilde{Z}\sim_{\mathbb{Q}}\\
\sim_{\mathbb{Q}}(\lambda+2\lambda\varepsilon-1)s_{\widetilde{E}_i}+(3\lambda+2\lambda\varepsilon-3)f_{\widetilde{E}_i},
\end{multline*}
so that $\Xi\sim_{\mathbb{Q}} (\lambda+2\lambda\varepsilon-1-c)s_{\widetilde{E}_i}+(3\lambda+2\lambda\varepsilon-3-3c)f_{\widetilde{E}_i}$,
which gives $3\lambda+2\lambda\varepsilon-3-3c\geqslant 0$.
Since $c\geqslant 1$ and $\lambda<\frac{3}{4}$, we deduce that $\varepsilon\geqslant\frac{3}{\lambda}-\frac{3}{2}>4-\frac{3}{2}=\frac{5}{2}$.
But $\varepsilon\leqslant 1$.
The~obtained contradiction completes the~proof of the~lemma.
\end{proof}

To complete the~proof of Lemma~\ref{lemma:2-16}, we must show that  $C_i^i$ is not a~log~canonical center of the~log pair $(X_i,\lambda D)$.
Let $Z=C_i^i$. Suppose that $Z$ is a~log canonical center of the~pair $(X_i,\lambda D)$.
Let us seek for a~contradiction. Observe that $\mathrm{mult}_{Z}\big(D\big)\geqslant\frac{1}{\lambda}>\frac{4}{3}$.
Observe also that $Z$ is not a~log canonical center of the~log pair $(X_i,\lambda (F_{3,i}+E_i))$ and $D\sim_{\mathbb{Q}} F_{3,i}+E_i$.
Thus, replacing $D$ by a~divisor $(1+\mu)D-\mu(F_{3,i}+E_i)$ for an~appropriate non-negative rational number $\mu$,
we may assume that either the~surface $F_{3,i}$ or the~surface $E_i$ is not contained in the~support of the~$\mathbb{Q}$-divisor $D$.
Then we conclude that $F_{3,i}$ is not contained in the~support of the~$\mathbb{Q}$-divisor $D$, because

\begin{lemma}
\label{lemma:2-16-third}
The surface $E_i$ is contained in the~support of the~$\mathbb{Q}$-divisor $D$.
\end{lemma}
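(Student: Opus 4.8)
The plan is to argue by contradiction: suppose that $E_i$ is \emph{not} contained in the support of $D$. Then the restriction $D\vert_{E_i}$ is a well-defined effective $\mathbb{Q}$-divisor on the surface $E_i\cong\mathbb{P}^1\times\mathbb{P}^1$, and the goal is to contradict the inequality $\mathrm{mult}_{Z}(D)\geqslant\frac{1}{\lambda}>\frac{4}{3}$ already recorded above, where $Z=C_i^i$.

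First I would determine the numerical class of $D\vert_{E_i}$. Since $D\sim_{\mathbb{Q}}-K_{X_i}\sim_{\mathbb{Q}}\pi_i^*(2H)-E_i$ and $-K_{X_i}\vert_{E_i}\sim s_{E_i}+3f_{E_i}$, we get $D\vert_{E_i}\sim_{\mathbb{Q}} s_{E_i}+3f_{E_i}$. Write $D\vert_{E_i}=\delta Z+\Upsilon$, where $\delta\geqslant 0$ and $\Upsilon$ is an effective $\mathbb{Q}$-divisor on $E_i$ whose support does not contain $Z$. Because $E_i$ is not a component of $D$, restriction to $E_i$ cannot decrease the multiplicity along $Z$, so $\delta\geqslant\mathrm{mult}_{Z}(D)>\frac{4}{3}$; this is the same elementary inequality (``$\delta\geqslant m_{\Delta}$'') already used in the proof of Lemma~\ref{lemma:2-16-second}. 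On the other hand, $Z=C_i^i\sim s_{E_i}+f_{E_i}$, hence
$$\Upsilon\sim_{\mathbb{Q}}\big(s_{E_i}+3f_{E_i}\big)-\delta\big(s_{E_i}+f_{E_i}\big)=(1-\delta)s_{E_i}+(3-\delta)f_{E_i}.$$
Since $\Upsilon$ is effective on $\mathbb{P}^1\times\mathbb{P}^1$, its coefficient along $s_{E_i}$ must be non-negative, so $\delta\leqslant 1$, which contradicts $\delta>\frac{4}{3}$. Therefore $E_i$ is contained in the support of $D$.

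Since all the ingredients (the classes of $E_i\vert_{E_i}$, $\pi_i^*(2H)\vert_{E_i}$ and $Z$, together with the multiplicity inequality) are already available, there is no genuine obstacle here; the single point deserving a line of justification is $\delta\geqslant\mathrm{mult}_{Z}(D)$, which one checks from the local description of a component of $D$ near a general point of $Z$ in coordinates adapted to the surface $E_i$ and the curve $Z$.
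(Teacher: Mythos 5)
Your proof is correct and is essentially the paper's own argument: the paper derives the same contradiction with $\mathrm{mult}_{Z}(D)\geqslant\frac{1}{\lambda}>\frac{4}{3}$ by intersecting $D$ with a general fiber $\mathscr{C}=f_{E_i}$ of $E_i\to C_i$ (which meets $Z$ transversally in one point and satisfies $D\cdot\mathscr{C}=-K_{X_i}\cdot\mathscr{C}=1$), and since $\Upsilon\cdot f_{E_i}=1-\delta$, that is literally your non-negativity of the $s_{E_i}$-coefficient of $\Upsilon$. The only differences are cosmetic (restriction-and-decomposition on $E_i$ versus direct intersection with the fiber in $X_i$), and your justification of $\delta\geqslant\mathrm{mult}_{Z}(D)$ is the standard one.
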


\begin{proof}
Let $\mathscr{C}$ be a~general fiber of the~projection $E_i\to Z$.
If the~surface $E_i$ is contained in the~support of the~$\mathbb{Q}$-divisor $D$,
then
$1=-K_{X_i}\cdot\mathscr{C}=D\cdot \mathscr{C}\geqslant\mathrm{mult}_{Z}(D)\geqslant\frac{1}{\lambda}>\frac{4}{3}$,
which is absurd.
\end{proof}

Let $\nu\colon V\to X_i$ be the~blow up of the~curve $Z$, let $F$ be the~$\nu$-exceptional surface,
and let $\widetilde{E}_i$ be the~strict transform of the~surface $F$ via $\nu$.
Then $F\cong\mathbb{F}_n$ for some integer $n\geqslant 0$, and
$F\vert_F\sim -s_F+af_F$ for some integer $a$, where $s_F$ is a~section of the~projection $F\to Z$ such that $s_F^2=-n$, and~$f_F$ is a~fiber of this projection.
Since $-K_{X_i}\cdot Z=4$, we conclude that $F^3=-2$. Thus, we have $-2=F^3=(-s_F+af_F)^2=-n-2a$, so that $a=\frac{2-n}{2}$. On~the~other hand, we have
$\widetilde{E}_i\big\vert_{F}\sim s_F+\frac{n-2}{2}f_F$, since $E_i\cdot Z=(-s_{E_i}+f_{E_i})\cdot(s_{E_i}+f_{E_i})=0$.
But $\widetilde{E}_i\big\vert_{F}$ is an~irreducible curve, which implies that $n=2$, since $\frac{n-2}{2}<n$.
Thus, we see that $F\cong\mathbb{F}_2$ and $-F\vert_F\sim \widetilde{E}_i\vert_{F}=s_F$.
Observe also that the~action of the~group $G$ lifts to the~threefold $V$, since $Z$ is $G$-invariant.

\begin{remark}
\label{remark:2-16-KV-nef}
The divisor $-K_V$ is nef and big.
Indeed, the~linear system $|\pi_i^*(2H)-2E_i|$ is base point free.
Let $\mathcal{M}$ be its strict transform on $V$.
Then $\mathcal{M}+\widetilde{E}_i$ is a~linear subsystem of the~linear system $|-K_V|$,
so that the~base locus of the~linear system $|-K_V|$ is contained in $\widetilde{E}_i$.
But $\widetilde{E}_i\cong E_i$ and $-K_V\vert_{\widetilde{E}_i}\sim 2f_{\widetilde{E}_i}$,
where $f_{\widetilde{E}_i}$ is a~strict transform of the~curve $f_{E_i}$ on the~surface $\widetilde{E}_i$.
Then  $-K_V\vert_{\widetilde{E}_i}$ is nef, so that $-K_V$ is also nef.
Since $-K_V^3=12$, we see that  $-K_V$ is big.
\end{remark}

Let $m=\mathrm{mult}_{Z}(D)$, and let $\widetilde{D}$ be the~proper transform of the~divisor $D$ via $\nu$.
Then
$$
\widetilde{D}\big\vert_F\sim_{\mathbb{Q}} \big(\nu^*(-K_{X_i})-mF\big)\big\vert_F\sim_{\mathbb{Q}} ms_F+4f_F.
$$
Let $\mathscr{C}$ be a~sufficiently general fiber of the~conic bundle $\nu_i$ that is contained in $F_{3,i}$,
and let~$\widetilde{\mathscr{C}}$ be its strict transform on the~threefold $V$.
Then $\mathscr{C}$ is an~irreducible curve that is not contained in the~support of the~divisor $D$,
because we assumed that $F_{3,i}\not\subset\mathrm{Supp}(D)$.
Moreover, the~curve $\mathscr{C}$ intersects the~curve $Z$, because $F_{3,i}\vert_{E_i}=2Z$.
Thus, we have
$$
2-m=2-mF\cdot\widetilde{\mathscr{C}}=\big(\nu^*(-K_{X_i})-mF\big)\cdot\widetilde{\mathscr{C}}=\widetilde{D}\cdot\widetilde{\mathscr{C}}\geqslant 0,
$$
so that $m\leqslant 2$.
Since $\lambda m-1<1$ and $K_{V}+\lambda\widetilde{D}+(\lambda m-1)F\sim_{\mathbb{Q}}\nu^*(K_{X_i}+\lambda D)$,
the surface $F$ contains an~irreducible $G$-invariant smooth curve $\widetilde{Z}$~such that $\nu(\widetilde{Z})=Z$,
the curve $\widetilde{Z}$ is a~section of the~projection $F\to Z$,
and $\widetilde{Z}$ is a~center of log canonical singularities the~log pair $(V,\lambda\widetilde{D}+(\lambda m-1)F)$.
Let $\widetilde{m}=\mathrm{mult}_{\widetilde{Z}}(\widetilde{D})$. Then
\begin{equation}
\label{equation:2-16}
m+\widetilde{m}\geqslant\frac{2}{\lambda}>\frac{8}{3},
\end{equation}
because the~multiplicity of the~divisor $\lambda\widetilde{D}+(\lambda m-1)F$ at the~curve $\widetilde{Z}$ must be at least $1$.

\begin{lemma}
\label{lemma:2-16-three-and-half}
Either $\widetilde{Z}=s_F$ or $\widetilde{Z}\sim s_F+2f_F$.
\end{lemma}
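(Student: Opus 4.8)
The plan is to reduce the lemma to a short intersection computation on the surface $F\cong\mathbb{F}_2$. Since $\widetilde{Z}$ is, by construction, a section of the ruling $F\to Z$, and on $\mathbb{F}_2$ the only irreducible sections are the negative section $s_F$ (with $s_F^2=-2$) and the irreducible members of the linear systems $|s_F+kf_F|$ for integers $k\geqslant 2$, the two outcomes asserted in the lemma are the only ones that need to survive. Thus it suffices to assume $\widetilde{Z}\sim s_F+kf_F$ with $k\geqslant 3$ and derive a contradiction.

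To set up, I would collect the data already available after the blow-up $\nu\colon V\to X_i$: the surface $\widetilde{E}_i$ meets $F$ exactly along $s_F$, one has $F\vert_F\sim -s_F$, and $\widetilde{D}\vert_F\sim_{\mathbb{Q}}ms_F+4f_F$, where $m=\mathrm{mult}_Z(D)$ satisfies $\tfrac{1}{\lambda}\leqslant m\leqslant 2$ (the lower bound because $Z$ is a log canonical centre of $(X_i,\lambda D)$, the upper bound already proved). Exactly as in the proof of Lemma~\ref{lemma:2-16-second}, I would then apply adjunction along $F$ — that is, \cite[Theorem~5.50]{KoMo98} — to the log canonical pair $(V,\lambda\widetilde{D}+(\lambda m-1)F)$, whose log canonical centre $\widetilde{Z}$ is contained in $F$ and, since $k\geqslant 3$, is distinct from $s_F$; this shows that the coefficient $\theta$ of $\widetilde{Z}$ in $\widetilde{D}\vert_F$ satisfies $\lambda\theta\geqslant 1$, hence $\theta\geqslant\tfrac{1}{\lambda}>\tfrac{4}{3}$ (using $\lambda<\tfrac34$).

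Next I would write $\widetilde{D}\vert_F=\theta\widetilde{Z}+\sigma s_F+\Xi$, where $\sigma\geqslant 0$ and $\Xi$ is an effective $\mathbb{Q}$-divisor on $F$ whose support contains neither $\widetilde{Z}$ nor $s_F$; this is permitted because $\widetilde{Z}\ne s_F$. Comparing numerical classes gives $\Xi\sim_{\mathbb{Q}}(m-\theta-\sigma)s_F+(4-k\theta)f_F$. Intersecting with a general fibre $f_F$, for which $\Xi\cdot f_F\geqslant 0$, yields $m-\theta-\sigma\geqslant 0$, i.e. $\theta\leqslant m-\sigma$. Intersecting with $s_F$, for which $\Xi\cdot s_F\geqslant 0$ since $s_F\not\subset\mathrm{Supp}(\Xi)$, and using $s_F^2=-2$, $f_F\cdot s_F=1$ and $\widetilde{Z}\cdot s_F=k-2$, yields $-2m+2\sigma+4-(k-2)\theta\geqslant 0$, so $(k-2)\theta\leqslant 4-2m+2\sigma$ and therefore $\theta\leqslant 4-2m+2\sigma$ because $k\geqslant 3$. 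Combining $\theta>\tfrac{4}{3}$ with $\theta\leqslant 4-2m+2\sigma$ forces $\sigma>m-\tfrac{4}{3}$, whence $\theta\leqslant m-\sigma<\tfrac{4}{3}$, contradicting $\theta>\tfrac{4}{3}$. This excludes $k\geqslant 3$ and proves the lemma.

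The step I expect to be the real obstacle is realising that one must track the coefficient $\sigma$ of the negative section $s_F=\widetilde{E}_i\cap F$ in $\widetilde{D}\vert_F$. The crude estimate available from \eqref{equation:2-16} together with $m\leqslant 2$, which yields only $\widetilde{m}>\tfrac{2}{3}$, is too weak to exclude $k=3$; the contradiction materialises exactly when the adjunction bound $\theta\geqslant\tfrac{1}{\lambda}$ is played against the fact that $\sigma$ enters the two intersection numbers $\widetilde{D}\vert_F\cdot f_F$ and $\widetilde{D}\vert_F\cdot s_F$ with opposite signs. Everything else — the description of the sections of $\mathbb{F}_2$, the numerical classes on $F$, and the adjunction restriction — is routine and parallels the treatment of $Z_i$ and $Z_i^\prime$ in Lemma~\ref{lemma:2-16-second}.
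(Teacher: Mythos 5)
Your proof is correct and follows essentially the same route as the paper: restrict $\widetilde{D}$ to $F\cong\mathbb{F}_2$, extract the coefficient $\theta\geqslant\frac{1}{\lambda}>\frac{4}{3}$ of $\widetilde{Z}$ via adjunction, and use effectivity of the residue to force $k\leqslant 2$. The only (cosmetic) difference is that you track the coefficient $\sigma$ of $s_F$ separately and intersect with both $f_F$ and $s_F$, whereas the paper reaches the same conclusion in one step: since the effective cone of $\mathbb{F}_2$ is generated by $s_F$ and $f_F$, the $f_F$-coefficient $4-k\theta$ of $\Omega=\widetilde{D}\vert_F-\theta\widetilde{Z}$ is already nonnegative, which excludes $k\geqslant 3$ without any mention of $\sigma$.
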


\begin{proof}
Write $\widetilde{D}\vert_{F}=\theta\widetilde{Z}+\Omega$, where $\theta$ is a~non-negative rational number,
and $\Omega$ is an~effective $\mathbb{Q}$-divisor on $F$ such that its support does not contain $\widetilde{Z}$.
Using \cite[Theorem~5.50]{KoMo98}, we get $\theta\geqslant\frac{1}{\lambda}>\frac{4}{3}$.
But $\widetilde{Z}\sim s_F+kf_F$ for $k\in\mathbb{Z}$ such that $k=0$ or $k\geqslant 2$.
Thus, we have
$$
\Omega\sim_{\mathbb{Q}} ms_F+4f_F-\theta\widetilde{Z}\sim_{\mathbb{Q}}(m-\theta)s_F+(4-\theta k)f_F.
$$
Hence, if $k\ne 0$, then $0\leqslant 4-\theta k<4-\frac{4}{3}k$, so that $k=2$.
Then $\widetilde{Z}=s_F$ or $\widetilde{Z}\sim s_F+2f_F$.
\end{proof}

Let $\widetilde{F}_{3,i}$ be the~proper transform on $V$ of the~surface $F_{3,i}$.
If $\widetilde{Z}=s_F$, then $\widetilde{Z}=\widetilde{E}_i\cap\widetilde{F}_{3,i}$,
because $F_{3,i}$ is tangent to $E_i$ along the~curve $Z$ and $\widetilde{E}_i\cap\vert_{F}=\widetilde{Z}$.
Using this, we get

\begin{lemma}
\label{lemma:2-16-forth}
One has $\widetilde{Z}\ne s_F$.
\end{lemma}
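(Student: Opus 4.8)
The plan is to assume $\widetilde{Z}=s_F$ and derive a contradiction by pairing $\widetilde{D}$ with the strict transform of a general fibre of a natural ruling of $\widetilde{F}_{3,i}$. Recall that, under the assumption $\widetilde{Z}=s_F$, we already know $\widetilde{Z}=\widetilde{E}_i\cap\widetilde{F}_{3,i}$, so $\widetilde{Z}$ is a curve lying on the surface $\widetilde{F}_{3,i}$; and the reduction made before the lemma gives $F_{3,i}\not\subset\mathrm{Supp}(D)$, hence $\widetilde{F}_{3,i}\not\subset\mathrm{Supp}(\widetilde{D})$.

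First I would set up the ruling. Since $F_{3,i}=\eta_i^{-1}(\mathcal{C}_3)$ with $\mathcal{C}_3\cong\mathbb{P}^1$ the smooth branch conic, the restriction $\eta_i\vert_{F_{3,i}}\colon F_{3,i}\to\mathcal{C}_3$ is a conic bundle whose general fibre $\ell$ is an irreducible smooth conic with $-K_{X_i}\cdot\ell=2$; moreover the curve $Z=C_i^i$, being the ramification curve of the double cover $E_i\to\mathbb{P}^2$, maps isomorphically onto $\mathcal{C}_3$ and so is a section of this conic bundle, whence a general $\ell$ meets $Z$ transversally at one point $p$. Let $\widetilde{\ell}$ be the strict transform of such an $\ell$ on $V$. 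Then $\widetilde{\ell}\subset\widetilde{F}_{3,i}$ is smooth with $\nu_*\widetilde{\ell}=\ell$, one has $F\cdot\widetilde{\ell}=1$ because $\ell$ meets $Z$ transversally in $X_i$, and $\widetilde{\ell}$ passes through the point $q$ of $\widetilde{Z}=s_F$ lying over $p$. The last assertion is where the tangency of $F_{3,i}$ to $E_i$ along $Z$ enters: the tangent direction of $\ell$ at $p$ lies in $T_pF_{3,i}=T_pE_i\ (\mathrm{mod}\ T_pZ)$, which is exactly the point of $\widetilde{E}_i\cap F=s_F$ over $p$.

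The computation is then immediate. Using $D\sim_{\mathbb{Q}}-K_{X_i}$ and $\widetilde{D}\sim_{\mathbb{Q}}\nu^*(-K_{X_i})-mF$ with $m=\mathrm{mult}_Z(D)$, we get
$$
\widetilde{D}\cdot\widetilde{\ell}=(-K_{X_i})\cdot\ell-m\,(F\cdot\widetilde{\ell})=2-m.
$$
On the other hand $\widetilde{\ell}\not\subset\mathrm{Supp}(\widetilde{D})$ (a general ruling fibre lies in no component of $\widetilde{D}$, since the only component it could lie in is $\widetilde{F}_{3,i}$), so all local intersection numbers of $\widetilde{D}$ with $\widetilde{\ell}$ are non-negative, and at $q$ we have
$$
\big(\widetilde{D}\cdot\widetilde{\ell}\big)_q\geqslant\mathrm{mult}_q\big(\widetilde{D}\big)\geqslant\mathrm{mult}_{\widetilde{Z}}\big(\widetilde{D}\big)=\widetilde{m}.
$$
Hence $2-m=\widetilde{D}\cdot\widetilde{\ell}\geqslant\widetilde{m}$, that is, $m+\widetilde{m}\leqslant 2$, which contradicts \eqref{equation:2-16}, asserting $m+\widetilde{m}>\frac{8}{3}$. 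Therefore $\widetilde{Z}\neq s_F$.

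The only point that is not purely mechanical is checking the two incidence statements $F\cdot\widetilde{\ell}=1$ and $q\in\widetilde{Z}$; both come from an elementary local analysis of the blow up of $Z$, the second genuinely using that $F_{3,i}$ and $E_i$ are tangent (to first order) along $Z$. Everything else is the bookkeeping displayed above, which I would not expect to spell out in detail. (Equivalently, one could phrase the last step by restricting $\widetilde{D}$ to $\widetilde{F}_{3,i}$, writing $\widetilde{D}\vert_{\widetilde{F}_{3,i}}=\theta\widetilde{Z}+\Omega$ with $\Omega$ effective not containing $\widetilde{Z}$, and using $\widetilde{Z}\cdot\widetilde{\ell}=1$, $\Omega\cdot\widetilde{\ell}\geqslant 0$ and $\theta\geqslant\widetilde{m}$ to reach the same inequality.)
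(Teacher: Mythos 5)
Your proposal is correct and is essentially the paper's own argument: the paper also pairs $\widetilde{D}$ with the strict transform $\widetilde{\mathscr{C}}$ of a general fibre of the conic bundle contained in $F_{3,i}$, uses $F\cdot\widetilde{\mathscr{C}}=1$ and the fact that this curve meets $s_F=\widetilde{E}_i\cap\widetilde{F}_{3,i}$ to get $2-m=\widetilde{D}\cdot\widetilde{\mathscr{C}}\geqslant\widetilde{m}$, and contradicts \eqref{equation:2-16}. Your extra justification of the incidence $q\in s_F$ via the tangency of $F_{3,i}$ and $E_i$ along $Z$ matches the reduction the paper makes just before the lemma.
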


\begin{proof}
If $\widetilde{Z}=s_F$, then $\widetilde{\mathscr{C}}$ intersects the~curve $\widetilde{Z}$, so hat
$2-m\geqslant 2-mF\cdot\widetilde{\mathscr{C}}=\widetilde{D}\cdot\widetilde{\mathscr{C}}\geqslant\widetilde{m}$,
which contradicts \eqref{equation:2-16}.
\end{proof}

Thus, we see that $\widetilde{Z}\sim s_F+2f_F$.

\begin{remark}
\label{remark:2-16-unique-curve-F}
The curve $\widetilde{Z}$ is unique $G$-invariant curve in the~linear system $|s_F+2f_F|$,
because $(s_F+2f_F)\cdot\widetilde{Z}=2$, and $\widetilde{Z}$ does not have $G$-orbits of length less than $4$.
\end{remark}

Let $\rho\colon Y\to V$ be the~blow up of the~curve $\widetilde{Z}$,
and let $R$ be the~$\rho$-exceptional surface. Then $-K_{Y}^3=2$.

\begin{lemma}
\label{lemma:2-16-fifth}
The divisor $-K_Y$ is nef.
\end{lemma}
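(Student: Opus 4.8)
The plan is to write $-K_Y$ as an effective sum of prime divisors and then to check that $-K_Y$ restricts to a nef divisor on each of them: an irreducible curve in $Y$ which is not a component of such a sum meets it non-negatively, while a curve contained in one of the surfaces is controlled by the restriction.

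From $F_{3,i}\sim\pi_i^*(2H)-2E_i$ and $-K_{X_i}\sim\pi_i^*(2H)-E_i$ we get $-K_{X_i}\sim F_{3,i}+E_i$, and by Remark~\ref{remark:2-16-surfaces-curves} the surface $F_{3,i}$ is tangent to $E_i$ along $Z=C_i^i$. Pulling back along $\nu$ and using $\mathrm{mult}_Z F_{3,i}=\mathrm{mult}_Z E_i=1$ gives $-K_V\sim\widetilde F_{3,i}+\widetilde E_i+F$, the decomposition already exploited in Remark~\ref{remark:2-16-KV-nef}. I would then pull back along $\rho$, using three facts: the curve $\widetilde Z\sim s_F+2f_F$ meets $\widetilde E_i\cap F=s_F$ in $(s_F+2f_F)\cdot s_F=0$ points, so $\widetilde Z$ is disjoint from $\widetilde E_i$; the curve $\widetilde Z$ is not contained in $\widetilde F_{3,i}$, since $\widetilde F_{3,i}\cap F\in|s_F+4f_F|$ and the residual divisor $(\widetilde F_{3,i}\cap F)-\widetilde Z\sim 2f_F$ would be a sum of two fibres of $F\to Z$, impossible because $\widetilde Z$ has no $G$-orbit of length less than $4$; and $\widetilde Z$ is a Cartier divisor on $F$, so $\mathrm{mult}_{\widetilde Z}F=1$. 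Writing $\widehat E_i$, $\widehat F$, $\widehat F_{3,i}$ for the proper transforms on $Y$, we obtain $\rho^*\widetilde E_i=\widehat E_i$, $\rho^*\widetilde F_{3,i}=\widehat F_{3,i}$ and $\rho^*F=\widehat F+R$, hence
$$
-K_Y=\rho^*(-K_V)-R\sim\widehat F_{3,i}+\widehat E_i+\widehat F.
$$

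It then suffices to see that $-K_Y$ is nef on $\widehat E_i$, on $\widehat F$ and on $\widehat F_{3,i}$. Since $\widetilde Z$ is disjoint from $\widetilde E_i$, the surface $\widehat E_i$ is isomorphic to $E_i\cong\mathbb P^1\times\mathbb P^1$ and $-K_Y\vert_{\widehat E_i}=-K_V\vert_{\widetilde E_i}\sim 2f_{\widetilde E_i}$ by Remark~\ref{remark:2-16-KV-nef}, which is base point free. Since $\widetilde Z$ is a divisor of $F\cong\mathbb F_2$, the induced map $\widehat F\to F$ is an isomorphism; adjunction on $V$ gives $-K_V\vert_F\sim s_F+4f_F$, while $R\vert_{\widehat F}$ is identified with $\widetilde Z\sim s_F+2f_F$, so $-K_Y\vert_{\widehat F}\sim 2f_F$, again base point free. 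Both restrictions are nef.

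The restriction to $\widehat F_{3,i}$ is the main obstacle. By the orbit argument above $\widetilde Z$ meets $\widetilde F_{3,i}$ transversally in four points, which form a single $G$-orbit and lie in the smooth locus; hence $\mu\colon\widehat F_{3,i}\to F_{3,i}$ is the blow up of these four points and $R\vert_{\widehat F_{3,i}}=e_1+e_2+e_3+e_4$ is the sum of the four $(-1)$-curves, so
$$
-K_Y\big\vert_{\widehat F_{3,i}}=\mu^*\big({-}K_V\big\vert_{F_{3,i}}\big)-e_1-e_2-e_3-e_4.
$$
To finish I would use that $F_{3,i}=\eta_i^{-1}(\mathcal C_3)$ is a conic bundle over the smooth $G$-invariant conic $\mathcal C_3\subset\mathbb P^2=\mathbb P(\mathbb U_3)$ and that the four blown up points are in general position with respect to this ruling, together with the base point free system on $Y$ obtained by pulling back $|\mathcal O_{\mathbb P^2}(2)|$ along $\eta_i\circ\nu\circ\rho$, whose restriction to $\widehat F_{3,i}$ is a sum of four fibres of the induced ruling $\widehat F_{3,i}\to\mathcal C_3$; these should exhibit $-K_Y\vert_{\widehat F_{3,i}}$ as a nef, in fact semiample, class, which can be confirmed by intersecting it with the finitely many $G$-orbits of negative curves on $\widehat F_{3,i}$. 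Pinning down the precise numerical type of $-K_V\vert_{F_{3,i}}$ and the position of the four points — through the explicit equation $f_{3,i}$ of $\overline F_{3,i}$ and the discriminant computation of Lemma~\ref{lemma:2-16-discriminant} — is where the real work lies.
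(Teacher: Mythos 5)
Your decomposition $-K_Y\sim\widehat F_{3,i}+\widehat E_i+\widehat F$ and the treatment of the first two surfaces match the paper's proof exactly, and your reductions there are correct. The gap is in the third surface, and it is not a technicality: you assume that the four points of $\widetilde Z\cap\widetilde F_{3,i}$ are ``in general position with respect to the ruling,'' but this is precisely the assertion on which nefness hinges, and it is false as stated. Those four points are the points $\widetilde Z\cap e_k$, where $e_k$ is the exceptional $(-2)$-curve of the minimal resolution $\widetilde F_{3,i}\to F_{3,i}$ over the node $\ell_k\cap\ell_k^\prime$; in particular all four lie in the four \emph{singular} fibres of the conic bundle $\widetilde F_{3,i}\to\mathcal C_3$ over the tangency points of $\mathcal C_3$ with the discriminant $\Delta_i$. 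Each such fibre contains two $(-1)$-curves $\widetilde\ell_k$, $\widetilde\ell_k^\prime$ with $-K_V\cdot\widetilde\ell_k=-K_V\cdot\widetilde\ell_k^\prime=0$, so that $-K_Y\cdot\widehat\ell_k=-R\cdot\widehat\ell_k$. Consequently $-K_Y$ is nef \emph{if and only if} $\widetilde Z$ misses the two $G$-orbits $\Theta=(\widetilde\ell_1+\cdots+\widetilde\ell_4)\cap F$ and $\Theta^\prime=(\widetilde\ell_1^\prime+\cdots+\widetilde\ell_4^\prime)\cap F$, both of which sit inside $e_1+e_2+e_3+e_4$ where $\widetilde Z$ meets $\widetilde F_{3,i}$. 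Since $G$-equivariance alone forces $\widetilde Z$ into the singular fibres, no soft general-position or semiampleness argument can decide whether it additionally hits $\Theta$ or $\Theta^\prime$; a priori it might, and then the lemma would be false.

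The paper closes this by an explicit computation that your proposal defers: it identifies the unique three-dimensional $G$-invariant subsystems $\mathcal M_V,\mathcal M_V^\prime\subset|-K_V|$ through $\Theta$ and $\Theta^\prime$ (via the representation-theoretic splitting of $H^0(\mathcal O_F(s_F+4f_F))$), observes that $\Theta\subset\widetilde Z$ would force $\widetilde Z\subseteq\mathrm{Bs}(\mathcal M_V)$ and hence a local intersection bound $(M_1\cdot M_2)_{C_i}\geqslant 3$ for two members of the corresponding net $\mathcal M\subset|2H|$ on $V_4$, and then writes down the equations of the eight lines $\overline\ell_k,\overline\ell_k^\prime$ and of the nets $\mathcal M,\mathcal M^\prime$ to check that this bound fails. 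Some version of this explicit verification (or an equivalent one with the equation $f_{3,i}$ and the tangency data of $\mathcal C_3$ with $\Delta_i$) is unavoidable, and without it your argument does not establish the lemma. A secondary inaccuracy: $-K_Y\vert_{\widehat F_{3,i}}$ is not a sum of fibres of $\widehat F_{3,i}\to\mathcal C_3$; it contains the section $\widehat s_F$ as a summand, so the pullback of $|\mathcal O_{\mathbb P^2}(2)|$ does not by itself exhibit it as nef.
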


\begin{proof}
Let $\widehat{F}_{3,i}$, $\widehat{E}_i$, $\widehat{F}$ be the~strict transforms of the~surfaces $F_{3,i}$, $E_i$, $F$, respectively.
Then $|-K_Y|$ contains the~divisor $\widehat{F}_{3,i}+\widehat{E}_i+\widehat{F}$.
Therefore, to prove the~required assertion, it is enough to prove that the~restrictions
$-K_Y\vert_{\widehat{F}_{3,i}}$, $-K_Y\vert_{\widehat{E}_i}$ and $-K_Y\vert_{\widehat{F}}$ are nef.

The nefness of the~restriction $-K_Y\vert_{\widehat{E}_i}$ follows from the~nefness of the~restriction $-K_V\vert_{\widetilde{E}_i}$,
because $\widetilde{Z}$ is disjoint from the~surface $\widetilde{E}_i$.
To check the~nefness of the~restriction $-K_Y\vert_{\widehat{F}}$, note that $\widetilde{Z}\sim s_F+2f_F$ and
$-K_V\vert_{F}\sim s_F+4f_F$,
so that $-K_Y\vert_{\widehat{F}}$ is rationally equivalent to the~sum of two fibers of the~ projection $\widehat{F}\to\mathbb{P}^1$.
Hence, the~restriction $-K_Y\vert_{\widehat{F}}$ is nef.

Thus, we must prove that  $-K_Y\vert_{\widehat{F}_{3,i}}$ is nef.
To do this, recall that $F_{3,i}$ is a~preimage via the~conic bundle $\eta_i$ of a~$G$-invariant conic in $\mathbb{P}^2$,
which we denoted earlier by $\mathcal{C}_3$.
Using explicit equation of the~surface $F_{3,i}$, one can check that
this conic intersects the~discriminant curve $\Delta_i$ by four points that form a~$G$-orbit of length $4$,
so that $\mathcal{C}_3$ has simple tangency with $\Delta_i$ at every intersection point.
Denote the~points in $\mathcal{C}_3\cap\Delta_i$ by $P_1$, $P_2$, $P_3$ and $P_4$.
For each $k\in\{1,2,3,4\}$, we have $\eta_i^{-1}(P_k)=\ell_k+\ell_k^\prime$,
where $\ell_k$ and $\ell_k^\prime$ are smooth rational curve that intersect transversally at one point.
Thus, in total we obtain eight smooth rational curves $\ell_1$, $\ell_1^\prime$, $\ell_2$, $\ell_2^\prime$, $\ell_3$, $\ell_3^\prime$, $\ell_4$, $\ell_4^\prime$.
Denote their images in $V_4$ by $\overline{\ell}_1$, $\overline{\ell}_1^\prime$, $\overline{\ell}_2$, $\overline{\ell}_2^\prime$,
$\overline{\ell}_3$, $\overline{\ell}_3^\prime$, $\overline{\ell}_4$, $\overline{\ell}_4^\prime$, respectively.
Then these eight curves are lines, which we will describe later.
Similarly, denote their strict transforms on $V$ by $\widetilde{\ell}_1$, $\widetilde{\ell}_1^\prime$, $\widetilde{\ell}_2$, $\widetilde{\ell}_2^\prime$, $\widetilde{\ell}_3$, $\widetilde{\ell}_3^\prime$, $\widetilde{\ell}_4$, $\widetilde{\ell}_4^\prime$, respectively.
Then, by construction, we have
$$
-K_{V}\cdot\widetilde{\ell}_1=-K_{V}\cdot\widetilde{\ell}_1^\prime=-K_{V}\cdot\widetilde{\ell}_2=-K_{V}\cdot\widetilde{\ell}_2^\prime=-K_{V}\cdot\widetilde{\ell}_3=-K_{V}\cdot\widetilde{\ell}_3^\prime=-K_{V}\cdot\widetilde{\ell}_4=-K_{V}\cdot\widetilde{\ell}_4^\prime=0.
$$
Finally, let us denote the~strict transforms on $Y$ of these eight curves  by $\widehat{\ell}_1$, $\widehat{\ell}_1^\prime$, $\widehat{\ell}_2$, $\widehat{\ell}_2^\prime$, $\widehat{\ell}_3$, $\widehat{\ell}_3^\prime$, $\widehat{\ell}_4$, $\widehat{\ell}_4^\prime$, respectively.
For every $k\in\{1,2,3,4\}$, we have
$-K_{Y}\cdot\widehat{\ell}_k=-R\cdot\widehat{\ell}_k$ and $-K_{Y}\cdot\widehat{\ell}_k^\prime=-R\cdot\widehat{\ell}_k^\prime$.
Therefore, if $\widetilde{Z}$ intersects a~curve $\widehat{\ell}_k$ or $\widehat{\ell}_k^\prime$,
then $-K_Y$ is not nef, because in these case we have $-K_{Y}\cdot\widehat{\ell}_k<0$ or $-K_{Y}\cdot\widehat{\ell}_k^\prime<0$, respectively.

First, let us show that the~curves $\widehat{\ell}_1$, $\widehat{\ell}_1^\prime$, $\widehat{\ell}_2$, $\widehat{\ell}_2^\prime$, $\widehat{\ell}_3$, $\widehat{\ell}_3^\prime$, $\widehat{\ell}_4$, $\widehat{\ell}_4^\prime$
are the~only curves in $\widehat{F}_{3,i}$ that a~priori may have negative intersections with the~divisor $-K_Y$.
After thus, we will explicitly check that $\widetilde{Z}$ does not intersects any of the~curves
$\widetilde{\ell}_1$, $\widetilde{\ell}_1^\prime$, $\widetilde{\ell}_2$, $\widetilde{\ell}_2^\prime$, $\widetilde{\ell}_3$, $\widetilde{\ell}_3^\prime$, $\widetilde{\ell}_4$, $\widetilde{\ell}_4^\prime$,
which would imply that $-K_Y$ is indeed nef.

By construction, the~curves $\ell_1$, $\ell_1^\prime$, $\ell_2$, $\ell_2^\prime$, $\ell_3$, $\ell_3^\prime$, $\ell_4$, $\ell_4^\prime$
form two $G$-irreducible curves each consisting of four irreducible components.
Without loss of generality, we may assume that $\ell_1+\ell_2+\ell_3+\ell_4$ is one of these curves,
and $\ell_1^\prime+\ell_2^\prime+\ell_3^\prime+\ell_4^\prime$ is another curve.

Observe that $\widetilde{F}_{3,i}\vert_{F}\sim s_F+4f_F$ and the~intersection $\widetilde{F}_{3,i}\cap F$ contains the~curve $s_F$.
This implies that $\widetilde{F}_{3,i}\vert_{F}=s_F+e_1+e_2+e_3+e_4$,
where $e_k$ is a~fiber of the~projection $F\to Z$ such that $\nu(e_k)=\ell_k\cap\ell_k^\prime$.
Since $\widetilde{F}_{3,i}\vert_{\widetilde{E}_i}=s_F$, we see that $\widetilde{F}_{3,i}$ is smooth.
Moreover, we have $(s_F\cdot s_F)_{\widetilde{F}_{3,i}}=-2$, because $\widetilde{E}_i^2\cdot \widetilde{F}_{3,i}=-2$.
Now, using this and $F^2\cdot \widetilde{F}_{3,i}=-2$, we conclude that
$(e_1\cdot e_1)_{\widetilde{F}_{3,i}}=(e_2\cdot e_2)_{\widetilde{F}_{3,i}}=(e_3\cdot e_3)_{\widetilde{F}_{3,i}}=(e_4\cdot e_5)_{\widetilde{F}_{3,i}}=-2$.
Thus, we conclude that $F_{3,i}$ has an~ordinary double point at each point $\ell_k\cap\ell_k^\prime$,
and the~birational morphism $\nu$ induces the~minimal resolution of singularities $\widetilde{F}_{3,i}\to F_{3,i}$,
which contracts the~curve $e_k$ to the~point $\ell_k\cap\ell_k^\prime$.

The composition $\eta_i\circ\nu$ induces a~conic bundle $\widetilde{F}_{3,i}\to\mathcal{C}_3$.
The curve $s_F$ is its section, and its (scheme) fibers over the~points $P_1$, $P_2$, $P_3$, $P_4$ are
$e_1+\widetilde{\ell}_1+\widetilde{\ell}_1^\prime$,
$e_2+\widetilde{\ell}_1+\widetilde{\ell}_2^\prime$,
$e_3+\widetilde{\ell}_1+\widetilde{\ell}_3^\prime$,
$e_4+\widetilde{\ell}_1+\widetilde{\ell}_4^\prime$,
respectively.
Thus, for every $k\in\{1,2,3,4\}$, the~curves  $\widetilde{\ell}_k$ and $\widetilde{\ell}_k^\prime$ are disjoint $(-1)$-curves on the~surface $\widetilde{F}_{3,i}$,
which both do not intersect the~section $s_F$, because $s_F$ intersects the~$(-2)$-curve $e_k$.
Moreover, we have
$$
-K_V\big\vert_{\widetilde{F}_{3,i}}\sim s_F+\sum_{k=1}^{4}\big(e_k+\widetilde{\ell}_k+\widetilde{\ell}_k^\prime\big),
$$
because $-K_V\sim \nu^*(F_{3,i})+\widetilde{E}_i$ and $\widetilde{E}_i\vert_{\widetilde{F}_{3,i}}=s_F$.

The curve $\widetilde{Z}$ intersects the~surface $\widetilde{F}_{3,i}$ transversally by a~$G$-orbit of length~$4$,
because it intersects the~(reducible) curve $s_F+e_1+e_2+e_3+e_4$ transversally by the~points
$\widetilde{Z}\cap e_1$, $\widetilde{Z}\cap e_2$, $\widetilde{Z}\cap e_3$, $\widetilde{Z}\cap e_4$,
which form one $G$-orbit.
Thus, the~morphism $\rho$ induces a~birational morphism $\varrho\colon\widehat{F}_{3,i}\to\widetilde{F}_{3,i}$ that is a~a blow up of this $G$-orbit.
Using this, we see that
$$
-K_Y\big\vert_{\widehat{F}_{3,i}}\sim \varrho^*\Big(s_F+\sum_{k=1}^{4}\big(e_k+\widetilde{\ell}_k+\widetilde{\ell}_k^\prime\big)\Big)-r_1-r_2-r_3-r_4
$$
where $r_k$ is the~exceptional curve of $\varrho$ that is contracted to the~point $\widetilde{Z}\cap e_k$.
Observe that these four points $\widetilde{Z}\cap e_1$, $\widetilde{Z}\cap e_2$, $\widetilde{Z}\cap e_3$, $\widetilde{Z}\cap e_4$
are not contained in the~curve $s_F$, because the~curves $\widetilde{Z}$ and $s_F$ are disjoint.
Moreover, we have three mutually excluding options:
\begin{enumerate}
\item  the~$G$-orbit $\widetilde{Z}\cap\widetilde{F}_{3,i}$ is contained in the~curve $\widetilde{\ell}_1+\widetilde{\ell}_2+\widetilde{\ell}_3+\widetilde{\ell}_4$;
\item  the~$G$-orbit $\widetilde{Z}\cap\widetilde{F}_{3,i}$ is contained in the~curve $\widetilde{\ell}_1^\prime+\widetilde{\ell}_2^\prime+\widetilde{\ell}_3^\prime+\widetilde{\ell}_4^\prime$;
\item the~$G$-orbit $\widetilde{Z}\cap\widetilde{F}_{3,i}$ is contained in the~curves $\widetilde{\ell}_1+\widetilde{\ell}_2+\widetilde{\ell}_3+\widetilde{\ell}_4$ and $\widetilde{\ell}_1^\prime+\widetilde{\ell}_2^\prime+\widetilde{\ell}_3^\prime+\widetilde{\ell}_4^\prime$.
\end{enumerate}
As we already mentioned, the~divisor $-K_Y$ is not nef in the~first two cases.
In the~third case, we have
$$
-K_Y\big\vert_{\widehat{F}_{3,i}}\sim\widehat{s}_F+\sum_{k=1}^{4}\big(\widehat{e}_k+\widehat{\ell}_k+\widehat{\ell}_k^\prime\big),
$$
where $\widehat{s}_F$ and $\widehat{e}_k$ are strict transforms of the~curves $s_F$ and $e_k$ on the~surface $\widehat{F}_{3,i}$.
Moreover, in this case, we have
$\widehat{s}_F\cdot\widehat{s}_F=-2$, $\widehat{s}_F\cdot \widehat{e}_k=1$, $\widehat{e}_k=1\cdot\widehat{e}_k=-1$, $\widehat{e}_k\cdot \widehat{e}_k=-3$,
$\widehat{e}_k\cdot\widehat{\ell}_k=1$, $\widehat{e}_k\cdot\widehat{\ell}_k^\prime=1$ on the~surface $\widehat{F}_{3,i}$, and all other intersections are zero.
This immediately implies that the~divisor $-K_Y\big\vert_{\widehat{F}_{3,i}}$ is nef in the~third case,
so that $-K_Y$ is also nef.

Therefore, we proved that the~divisor $-K_Y$ is nef if and only if the~curve $\widetilde{Z}$ does not intersect the~curves $\widetilde{\ell}_1+\widetilde{\ell}_2+\widetilde{\ell}_3+\widetilde{\ell}_4$,
and $\widetilde{\ell}_1+\widetilde{\ell}_2+\widetilde{\ell}_3+\widetilde{\ell}_4$.
Observe that these curves intersects the~$\nu$-exceptional surface $F$ by two (distinct) $G$-orbits of length $4$, respectively.
Denote these $G$-orbits by $\Theta$ and $\Theta^\prime$, respectively.
Hence, to complete the~proof, it is enough to check that neither $\Theta$ nor $\Theta^\prime$ is contained in the~curve $\widetilde{Z}$.

We have $h^0(\mathcal{O}_{V}(-K_V))=9$ by the~Riemann--Roch formula and the~Kawamata--Viehweg vanishing,
since $-K_V$ is big and nef by Remark~\ref{remark:2-16-KV-nef}.
Moreover, we have $-K_V\vert_{F}\sim s_F+4f_F$ and
$h^0(\mathcal{O}_{F}(s_F+4f_F))=8$.
Furthermore, we have $h^0(\mathcal{O}_{V}(-K_V-F))=1$, since the~linear system $|-K_V-F|$ contains unique effective divisor: $\widetilde{F}_{3,i}+\widetilde{E}_i$.
This gives the~following exact sequence of $G$-representations:
\begin{equation}
\label{equation:2-16-exact-sequence}
0\longrightarrow H^0\Big(\mathcal{O}_{V}\big(\widetilde{F}_{3,i}+\widetilde{E}_i\big)\Big)\longrightarrow H^0\Big(\mathcal{O}_{V}\big(-K_V\big)\Big)\longrightarrow H^0\Big(\mathcal{O}_{F}\big(s_F+4f_F\big)\Big)\longrightarrow 0.
\end{equation}
Here, the~kernel of the~third map is the~one-dimensional $G$-representation generated by the~section
vanishing on the~divisor $\widetilde{F}_{3,i}+\widetilde{E}_i+F$.

Note that $s_F\cong\mathbb{P}^1$ and $(s_F+4f_F)\cdot s_F=2$.
Thus, the Riemann--Roch formula and the~Kawamata--Viehweg vanishing give
the following exact sequence of $G$-representations:
$$
0\longrightarrow H^0\big(\mathcal{O}_{F}(4f_F)\big)\longrightarrow H^0\big(\mathcal{O}_{F}(s_F+4f_F)\big)\longrightarrow H^0\big(\mathcal{O}_{\mathbb{P}^1}(2)\big)\longrightarrow 0.
$$
Since $s_F$ does not have $G$-orbits of length $2$,
we have $H^0(\mathcal{O}_{\mathbb{P}^1}(2))\cong\mathbb{U}_3$, where $\mathbb{U}_3$ is the~unique irreducible three-dimensional representation of the~group $G$.
Similarly, since $Z$ has exactly two $G$-orbits of length $4$,
we have $H^0(\mathcal{O}_{F}(4f_F))\cong\mathbb{U}_1\oplus\mathbb{U}_1^\prime\oplus\mathbb{U}_3$,
where $\mathbb{U}_1$ and $\mathbb{U}_1^\prime$ are different one-dimensional representations of the~group $G$.
Thus, one has
$$
H^0\Big(\mathcal{O}_{F}\big(s_F+4f_F\big)\Big)\cong\mathbb{U}_1\oplus\mathbb{U}_1^\prime\oplus\mathbb{U}_3\oplus\mathbb{U}_3.
$$
We may assume that $\mathbb{U}_1$ is generated by a~section that vanishes at $s_F+e_1+e_2+e_3+e_4$.

Let $\mathbb{V}$ and $\mathbb{V}^\prime$ be sub-representations in $H^0(\mathcal{O}_{F}(s_F+4f_F))$
that consist of all sections vanishing at the~$G$-orbits $\Theta$ and $\Theta^\prime$, respectively.
Then $\mathrm{dim}(\mathbb{V})=\mathrm{dim}(\mathbb{V}^\prime)=4$, so that
$$
\mathbb{V}\cong\mathbb{V}^\prime\cong \mathbb{U}_1\oplus\mathbb{U}_3,
$$
since both $G$-orbits $\Theta$ and $\Theta^\prime$ are contained in $s_F+e_1+e_2+e_3+e_4$ by construction.
Let~$\widetilde{\mathbb{V}}$ and $\widetilde{\mathbb{V}}^\prime$ be the~the preimages
in $H^0(\mathcal{O}_{V}(-K_V))$ via the~restriction map in \eqref{equation:2-16-exact-sequence} of the~sub-representations $\mathbb{V}$ and $\mathbb{V}^\prime$, respectively.
Then, as $G$-representations, we have
$$
\widetilde{\mathbb{V}}\cong\widetilde{\mathbb{V}}^\prime\cong \mathbb{U}_1\oplus\mathbb{U}_1^{\prime\prime}\oplus\mathbb{U}_3,
$$
where $\mathbb{U}_1^{\prime\prime}$ is a~one-dimensional representation of the~group $G$.
Since $\widetilde{\mathbb{V}}$ and $\widetilde{\mathbb{V}}^\prime$ contain unique three-dimensional subrepresentation of the~group $G$,
these (two) three-dimensional subrepresentations define two $G$-invariant linear subsystems $\mathcal{M}_V$ and $\mathcal{M}_V^\prime$ of the~linear system $|-K_V|$, respectively.
They can be characterized as (unique) three-dimensional $G$-invariant linear subsystems in $|-K_V|$ that contains $G$-orbits $\Theta$ and $\Theta^\prime$, respectively.
Then $\mathcal{M}_V\vert_{F}$ and $\mathcal{M}_V^\prime\vert_{F}$ are
(unique)  three-dimensional $G$-invariant linear subsystems of the~linear system $|s_F+4f_F|$
that contain $\Theta$ and $\Theta^\prime$, respectively.
Thus, if $\Theta\subset\widetilde{Z}$, then
$$
\mathcal{M}_V\big\vert_{F}=\widetilde{Z}+|2f_F|,
$$
so that $\widetilde{Z}\subseteq\mathrm{Bs}(\mathcal{M}_V)$.
Similarly, if $\Theta^\prime\subset\widetilde{Z}$, then $\mathcal{M}_V^\prime\vert_{F}=\widetilde{Z}+|2f_F|$,
so that $\widetilde{Z}\subseteq\mathrm{Bs}(\mathcal{M}_V^\prime)$.

Let $\mathcal{M}$ and $\mathcal{M}^\prime$ be strict transforms on $V_4$ of the~linear systems $\mathcal{M}_V$ and $\mathcal{M}_V^\prime$, respectively.
Then $\mathcal{M}$ and $\mathcal{M}^\prime$ are linear subsystems in $|2H|$, so that they do not have fixed components,
because $|H|$ does not have $G$-invariant divisors.
Let $M_1$ and $M_2$ be two distinct surfaces in $\mathcal{M}$.
If $\Theta\subset\widetilde{Z}$, then
\begin{equation}
\label{equation:2-16-M1-M2}
\Big(M_1\cdot M_2\Big)_{C_i}\geqslant 3.
\end{equation}
Similarly, if $\Theta^\prime\subset\widetilde{Z}$, then
\begin{equation}
\label{equation:2-16-M1-M2-prime}
\Big(M_1^\prime\cdot M_2^\prime\Big)_{C_i}\geqslant 3,
\end{equation}
where $M_1^\prime$ and $M_2^\prime$ are two surfaces in $\mathcal{M}^\prime$.
Both conditions \eqref{equation:2-16-M1-M2} and \eqref{equation:2-16-M1-M2-prime} are easy to check
provided that we know  generators of the~linear system $\mathcal{M}$ and $\mathcal{M}^\prime$.

Observe that the~curve $\widetilde{\ell}_1+\widetilde{\ell}_2+\widetilde{\ell}_3+\widetilde{\ell}_4$
is contained in the~base locus of the~linear system $\mathcal{M}_V$.
Indeed, one has $\mathcal{M}_V\subset|-K_V|$ and $-K_V\cdot \widetilde{\ell}_i=0$ for every $i\in\{1,2,3,4\}$,
while $\Theta\subseteq\mathrm{Bs}(\mathcal{M}_V)$ by construction,
and $\Theta$ is contained in  $\widetilde{\ell}_1+\widetilde{\ell}_2+\widetilde{\ell}_3+\widetilde{\ell}_4$ by definition.
Likewise, we see that $\widetilde{\ell}_1^\prime+\widetilde{\ell}_2^\prime+\widetilde{\ell}_3^\prime+\widetilde{\ell}_4^\prime$
is contained in the~base locus of the~linear system~$\mathcal{M}_V^\prime$.
Hence, the~$G$-irreducible curves $\overline{\ell}_1+\overline{\ell}_2+\overline{\ell}_3+\overline{\ell}_4$
and $\overline{\ell}_1^\prime+\overline{\ell}_2^\prime+\overline{\ell}_3^\prime+\overline{\ell}_4^\prime$ are
contained in the~base loci of the~linear systems $\mathcal{M}$ and $\mathcal{M}^\prime$, respectively.
Moreover, the~base loci of these linear systems also contain the~conic $C_i$.
Using these linear conditions, we can find the~generators of these linear systems,
and check the~conditions \eqref{equation:2-16-M1-M2} and \eqref{equation:2-16-M1-M2-prime}.

Since $X_1\cong X_2$ and $X_3\cong X_4$, it is enough to consider only the~cases $i=1$ and $i=3$.
First, we deal with the~case $i=1$. In this case, the~curves
$\overline{\ell}_1+\overline{\ell}_2+\overline{\ell}_3+\overline{\ell}_4$
and $\overline{\ell}_1^\prime+\overline{\ell}_2^\prime+\overline{\ell}_3^\prime+\overline{\ell}_4^\prime$
can be described as follows: up to a~swap and a~reshuffle, we may assume that
\begin{itemize}
\item $\overline{\ell}_1$ is the~line $[\lambda:\omega \lambda:-(\omega+1)\lambda:\mu-(\omega+2)\lambda:\mu:\mu+(\omega-1)\lambda]$,
\item $\overline{\ell}_2$ is the~line $[\lambda:-\omega \lambda:-(\omega+1)\lambda:-\mu-(\omega+2)\lambda:\mu:-\mu+(\omega-1)\lambda]$,
\item $\overline{\ell}_3$ is the~line $[\lambda:\omega \lambda:(\omega+1)\lambda:\mu-(\omega+2)\lambda:\mu:-\mu+(-\omega+1)\lambda]$,
\item $\overline{\ell}_4$ is the~line $[\lambda:-\omega \lambda:(\omega+1)\lambda:-\mu-(\omega+2)\lambda:\mu:\mu+(-\omega+1)\lambda]$,
\end{itemize}
and
\begin{itemize}
\item $\overline{\ell}_1^\prime$ is the~line $[\lambda:\omega \lambda:-(\omega+1)\lambda:\mu+(2\omega+1)\lambda:\mu:\mu+(\omega+2)\lambda]$,
\item $\overline{\ell}_2^\prime$ is the~line $[\lambda:-\omega \lambda:-(\omega+1)\lambda:-\mu+(2\omega+1)\lambda:\mu:-\mu+(\omega+2)\lambda]$,
\item $\overline{\ell}_3^\prime$ is the~line $[\lambda:\omega \lambda:(\omega+1)\lambda:\mu+(2\omega+1)\lambda:\mu:-\mu-(\omega+2)\lambda]$,
\item $\overline{\ell}_4^\prime$ is the~line $[\lambda:-\omega \lambda:(\omega+1)\lambda:-\mu+(2\omega+1)\lambda:\mu:\mu-(\omega+2)\lambda]$,
\end{itemize}
where $[\lambda:\mu]\in\mathbb{P}^1$.
Therefore, the~linear subsystem in $|2H|$ that consists of all surfaces containing the~conic  $C_1$
and the~curve $\overline{\ell}_1+\overline{\ell}_2+\overline{\ell}_3+\overline{\ell}_4$ is five-dimensional.
Moreover, it is generated by
the $G$-invariant surfaces $\overline{F}_{1,1}$, $\overline{F}_{3,1}$, $\overline{F}_{2,3}$, and the~$G$-invariant two-dimensional
linear subsystem (net)  that is cut out on $V_4$ by
\begin{multline}
\label{equation:2-16-M-case-1}
\lambda\Big((1-\omega )x_0x_5-(2\omega+1)x_2x_3+3x_0x_2\Big)+\\
+\mu\Big((\omega+1)x_1x_3+(2\omega+1)x_0x_1+x_4x_0\Big)+\\
+\gamma\Big((\omega+2)x_1x_2-\omega x_1x_5+x_2x_4\Big)=0,
\end{multline}
where $[\lambda:\mu:\gamma]\in\mathbb{P}^2$.
Therefore, we conclude that \eqref{equation:2-16-M-case-1} defines the~linear system~$\mathcal{M}$.
It follows from \eqref{equation:2-16-M-case-1} that the~base locus of this linear system
consists of the~conic $C_1$, the~curve $\overline{\ell}_1+\overline{\ell}_2+\overline{\ell}_3+\overline{\ell}_4$, and the~conic $C_3$.
Similarly, we see that $\mathcal{M}^\prime$ is given by
\begin{multline*}
\lambda\Big((2\omega+1)x_0x_5+(\omega+2)x_2x_3+3x_0x_2\Big)+\\
+\mu\Big((\omega+1)x_1x_3+(1-\omega)x_0x_1+x_4x_0 \Big)+\\
+\gamma\Big((2\omega+1)x_1x_2+\omega x_1x_5-x_2x_4\Big)=0,
\end{multline*}
where $[\lambda:\mu:\gamma]\in\mathbb{P}^2$.
We also see that the~base locus of the~linear system $\mathcal{M}^\prime$ consists of the~conic~$C_1$, the~curve $\overline{\ell}_1^\prime+\overline{\ell}_2^\prime+\overline{\ell}_3^\prime+\overline{\ell}_4^\prime$,
and the~conic $C_4$.
Now one can check that neither \eqref{equation:2-16-M1-M2} nor \eqref{equation:2-16-M1-M2-prime} holds.
Thus, if $i=1$ or $i=2$, then $-K_Y$ is nef.

Finally, we consider the~case $i=3$. Now, up to a~swap, the~linear system $\mathcal{M}$ is again given by \eqref{equation:2-16-M-case-1},
and the~linear system $\mathcal{M}^\prime$ is given by
\begin{multline*}
\lambda\Big((\omega+1)x_1x_5+x_4x_2-(\omega-1)x_4x_5\Big)+\\
+\mu\Big(\omega x_0x_5-x_3x_2+(2\omega+1)x_3x_5\Big)+\\
+\gamma\Big(\omega x_0x_5-x_3x_2+(2\omega+1)x_3x_5\Big)=0,
\end{multline*}
where $[\lambda:\mu:\gamma]\in\mathbb{P}^2$.
Note that the~base locus of the~net $\mathcal{M}^\prime$
consists of the~conic~$C_3$, the~curve $\overline{\ell}_1^\prime+\overline{\ell}_2^\prime+\overline{\ell}_3^\prime+\overline{\ell}_4^\prime$,
and the~conic $C_2$.
As above, one can check that neither \eqref{equation:2-16-M1-M2} nor \eqref{equation:2-16-M1-M2-prime} holds.
Thus, the~divisor $-K_Y$ is nef.
\end{proof}

Let $\widehat{D}$ be the~proper transform of the~divisor $D$ on the~threefold $Y$. Then
$$
\widehat{D}\sim_{\mathbb{Q}}\sim (\pi_i\circ\nu\circ\rho)^*(2H)-(\nu\circ\rho)^*(E_i)-m\rho^*(F)-\widetilde{m}R.
$$
Since $-K_Y$ is nef, we see that $-K_Y^2\cdot\widehat{D}\geqslant 0$.
To compute  $-K_Y^2\cdot\widehat{D}$, observe that
\begin{multline*}
H^3=4,\pi_i^*(H)\cdot E^2=-2, (\pi_i\circ\nu)^*(H)\cdot F^2=-2, \\
(\pi_i\circ\nu\circ\rho)^*(H)\cdot R^2=-2, E^3=-2, F^3=-2, R^3=-2,
\end{multline*}
and other intersections involved in the~computation $-K_Y^2\cdot\widehat{D}$ are all zero.
This gives
$$
0\leqslant -K_Y^2\cdot\widehat{D}=\Big((\pi_i\circ\nu\circ\rho)^*(2H)-(\nu\circ\rho)^*(E_i)-\rho^*(F)-R\Big)^2\cdot \widehat{D}=14-6(m+\widetilde{m}),
$$
so that $m+\widetilde{m}\leqslant\frac{7}{3}$, which is impossible by \eqref{equation:2-16}.
The obtained contradiction completes the~proof of Lemma~\ref{lemma:2-16},
which completes the~proof of Proposition~\ref{proposition:2-16}.
Thus, we see that the~threefolds $X_1$, $X_2$, $X_3$ and $X_4$ are K-polystable.

\end{document}